\tikzstyle{block} = [rectangle, draw, fill=blue!20, 
\renewcommand*{\backref}[1]{}
\renewcommand*{\backrefalt}[4]{%
    \ifcase #1 (Not cited.)%
    \or        (Cited on page~#2.)%
    \else      (Cited on pages~#2.)%
    \fi}
\newtheorem{theorem}{Theorem}[section]
\newtheorem{prop}[theorem]{Proposition}
\newtheorem{lemma}[theorem]{Lemma}
\newtheorem{cor}[theorem]{Corollary}
\theoremstyle{definition}
\newtheorem{definition}[theorem]{Definition}
\newtheorem{rem}[theorem]{Remark}
\newcommand{\IC}{\mathbb{C}}
\newcommand{\IH}{\mathbb{H}}
\newcommand{\IN}{\mathbb{N}}
\newcommand{\IP}{\mathbb{P}}
\newcommand{\IQ}{\mathbb{Q}}
\newcommand{\IR}{\mathbb{R}}
\newcommand{\IZ}{\mathbb{Z}}
\newcommand{\cA}{\mathcal{A}}
\newcommand{\cK}{\mathcal{K}}
\newcommand{\cL}{\mathcal{L}}
\newcommand{\cM}{\mathcal{M}}
\newcommand{\cO}{\mathcal{O}}
\newcommand{\cP}{\mathcal{P}}
\newcommand{\cS}{\mathcal{S}}
\newcommand{\cV}{\mathcal{V}}
\newcommand{\cX}{\mathcal{X}}
\newcommand{\MV}{\mathcal{MV}}
\newcommand{\coloneqq}{:=}
\DeclareMathOperator{\rank}{rank}
\newcommand{\Aut}{\operatorname{Aut}}
\newcommand{\Bir}{\operatorname{Bir}}
\newcommand{\Bs}{\operatorname{Bs}}
\newcommand{\cart}{\ar@{}[dr]|\square}
\DeclareMathOperator{\ch}{ch}
\newcommand{\conjug}{\operatorname{conj}}
\newcommand{\Diff}{\operatorname{Diff}}
\newcommand{\FL}{\operatorname{FL}}
\newcommand{\VFL}{\operatorname{VFL}}
\newcommand{\VFP}{\operatorname{VFP}}
\newcommand{\FP}{\operatorname{FP}}
\newcommand{\Gal}{\operatorname{Gal}}
\newcommand{\GL}{\operatorname{GL}}
\newcommand{\Grass}{\operatorname{Grass}}
\newcommand{\Hdg}{\operatorname{Hdg}}
\newcommand{\Hom}{\operatorname{Hom}}
\newcommand{\id}{\operatorname{id}}
\newcommand{\im}{\operatorname{im}}
\newcommand{\KAut}{\operatorname{KAut}}
\newcommand{\KHdg}{\operatorname{KHdg}}
\newcommand{\Mon}{\operatorname{Mon}}
\newcommand{\NS}{\operatorname{NS}}
\newcommand{\PGL}{\operatorname{PGL}}
\newcommand{\Pic}{\operatorname{Pic}}
\newcommand{\Quot}{\operatorname{Quot}}
\newcommand{\rk}{\operatorname{rk}}
\newcommand{\Spec}{\operatorname{Spec}}
\newcommand{\Stab}{\operatorname{Stab}}
\newcommand{\Sym}{\operatorname{Sym}}
\newcommand{\td}{\operatorname{td}}
\newcommand{\triv}{\operatorname{triv}}
\newcommand{\st}{\, \middle| \,}
\newcommand{\hstar}{{*_h}}
\newcommand{\starbar}{{\bar{*}}}
\newcommand{\set}[1]{\left\{ #1 \right\}}
\newcommand{\pa}[1]{\left( #1 \right)}
\title[Finiteness of real structures on compact hyperk\"ahler manifolds]{Finiteness of Klein actions and real structures on compact hyperk\"ahler manifolds}
\author{Andrea Cattaneo}
\address{Andrea Cattaneo\\ Institut Camille Jordan UMR 5208\\ Universit\'e Claude Bernard Lyon 1\\ 69622 Villeurbanne Cedex, France}
\email{cattaneo@math.univ-lyon1.fr}
\author{Lie Fu}
\address{Lie Fu\\ Institut Camille Jordan UMR 5208\\ Universit\'e Claude Bernard Lyon 1\\ 69622 Villeurbanne Cedex, France}
\email{fu@math.univ-lyon1.fr}
\date{\today}
\subjclass[2010]{14P99, 14J50 and 53G26}
\keywords{Hyperk\"ahler manifolds, real structures, automorphism groups, Morrison--Kawamata cone conjecture, group cohomology}
\thanks{Andrea Cattaneo is supported by the LABEX MILYON (ANR-10-LABX-0070) of Universit\'e de Lyon, within the program ``Investissements d'Avenir'' (ANR-11-IDEX- 0007) operated by the French National Research Agency (ANR) and is member of GNSAGA of INdAM. Lie Fu is supported by ECOVA (ANR-15-CE40-0002), HodgeFun (ANR-16-CE40-0011), LABEX MILYON (ANR-10-LABX-0070) of Universit\'e de Lyon and \emph{Projet Inter-Laboratoire} 2017, 2018 by F\'ed\'eration de Recherche en Math\'ematiques Rh\^one-Alpes/Auvergne CNRS 3490.}
\begin{document}

\begin{abstract}
One central problem in real algebraic geometry is to classify the real structures of a given complex manifold. We address this problem for compact hyperk\"ahler manifolds by showing that any such manifold admits only finitely many real structures up to equivalence. We actually prove more generally that there are only finitely many, up to conjugacy, faithful finite group actions by holomorphic or anti-holomorphic automorphisms (the so-called Klein actions). In other words, the automorphism group and the Klein automorphism group of a compact hyperk\"ahler manifold contain only finitely many conjugacy classes of finite subgroups. We furthermore answer a question of Oguiso by showing that the automorphism group of a compact hyperk\"ahler manifold is finitely presented. 
\end{abstract}

\maketitle 
\setcounter{tocdepth}{1}
\tableofcontents

\newpage
\section{Introduction}\label{sect:Intro}
\subsection{Background\,: real algebraic geometry}

Given a complex algebraic variety $X$, a \emph{real form} of $X$ is an algebraic variety $X_{0}$ defined over the field of real numbers $\IR$ such that $X_{0}\otimes_{\IR}\IC$ is isomorphic to $X$ as complex varieties. Of course, a complex variety can have distinct real forms. The simplest example is probably the complex projective line $\IP^{1}_{\IC}$, which has as non-isomorphic real forms the real projective line $\IP^{1}_{\IR}$ and the conic without real points $T_{0}^{2}+T_{1}^{2}+T_{2}^{2}=0$. More generally, given a fixed dimension, on one hand there is a unique smooth quadric over  $\IC$ up to isomorphism\,; on the other hand, any non-degenerate real quadratic form of the given rank gives rise to a real form of the complex quadric, however they are further distinguished by the absolute value of the signature. Naturally, two real forms $X_{0}$ and $X_{0}'$ are said to be \emph{equivalent} if they are $\IR$-isomorphic.

In real algebraic geometry, one important problem is the classification of all real forms, up to equivalence, of a given complex algebraic variety. It is more convenient to reformulate this problem in terms of \emph{real structures}. For simplicity, let us only consider in the introduction smooth and projective complex varieties so that we can shift to the complex analytic language via the GAGA principle \cite{GAGA}. By definition, a \emph{real structure} on a projective complex manifold is an anti-holomorphic involution\,; and the natural equivalence relation between real structures is the conjugation by a holomorphic automorphism. Note that this definition, as well as the equivalence relation, still makes sense in the larger category of complex manifolds (or even complex analytic spaces). It is easy to see that the datum of a real form is equivalent to that of a real structure and the equivalence relations correspond to each other (\emph{cf.}~\cite[Exercise II.4.7]{Hartshorne} and \cite[Introduction]{Benzerga}).

Two basic questions towards the problem of classification of real structures naturally arise\,: for a given  complex manifold
\begin{description}
\item[(Existence)] Does it admit at all any real structure?
\item[(Finiteness)] Are there only finitely many real structures up to equivalence?
\end{description}

For the first question on the existence, an obvious necessary condition is that the complex manifold should be isomorphic to its \emph{conjugate} (\emph{cf.}~Definition \ref{def:Conj} and Lemma \ref{lemma:Index2}). Indeed, if we consider a class of manifolds varying in a moduli space $\cM$, then we have always a set-theoretic involution on $\cM$ sending a point $[X]$ to the point $[\bar{X}]$ represented by the conjugate manifold, and the locus of those manifolds admitting a real structure is a subset of the fixed locus of this involution.

Once there exists at least one real structure $\sigma:X\to X$ on the complex manifold $X$, we have the following cohomological ``classification'' of real structures due to Borel--Serre \cite{BorelSerre}\,: the set of equivalence classes of real structures on $X$, hence the set of $\IR$-isomorphism classes of real forms of $X$ in the projective setting, is in bijection with the (non-abelian) group cohomology $H^{1}(\IZ/2\IZ, \Aut(X))$, where $\IZ/2\IZ$ is naturally identified with the Galois group $\Gal(\IC/\IR)$, $\Aut(X)$ is the group of holomorphic automorphisms of $X$ and the action of the non-trivial element of $\IZ/2\IZ$ on $\Aut(X)$ is given by the conjugation by $\sigma$.

This cohomological interpretation, together with the finiteness result \cite[Th\'eor\`eme~6.1]{BorelSerre}, allows us to answer the second question on the finiteness of real structures in the affirmative when $\Aut(X)/\Aut^{0}(X)$, the group of components of $\Aut(X)$, is a finite group or an arithmetic group\,: for instance, Fano varieties \cite[D.1.10]{RealEnriques}, abelian varieties (or more generally complex tori) \cite[D.1.11]{RealEnriques}, and varieties of general type \emph{etc.}, in particular, when $\dim X=1$. For the next case where $X$ is a complex projective surface, there is an extensive study carried out mainly by the Russian school (Degtyarev, Itenberg, Kharlamov, Kulikov, Nikulin \emph{et al}). We know that there are only finitely many real structures for del Pezzo surfaces, minimal algebraic surfaces \cite{RealEnriques}, algebraic surfaces with Kodaira dimension $\geq 1$ (\emph{cf.}~\cite{MR1936747}) \emph{etc}. The remaining biggest challenge for surfaces seems to be the case of rational surfaces and in fact more recently, based on \cite[Proposition~2.2]{Diller11} and \cite[Theorem 3.13]{MR3480704}, Benzerga \cite{Benzerga} shows that a rational surface with infinitely many non-equivalent real structures, if it exists, must be a blow-up of the projective plane at at least 10 points and possesses an automorphism of positive entropy, \emph{cf.}~also \cite{McMullen07}. 

It turns out that the answer to the finiteness question is negative in general. The first counter-example is due to Lesieutre in \cite{Lesieutre2017}, where he constructs a $6$-dimensional projective manifold with infinitely many non-equivalent real structures and discrete non-finitely generated automorphism group. Inspired by Lesieutre's work, Dinh and Oguiso \cite{DinhOguiso} show that suitable blowups of some K3 surfaces have the same non-finiteness properties, and hence produce such examples in each dimension $\geq 2$.

The finiteness question for higher-dimensional ($\geq 3$) varieties in general can be very delicate, and apart from the general positive results and the counter-examples mentioned above, it is far from being well-understood (see however the related work in the affine situation \cite{DFMJ} and on quasi-simplicity \cite{MR2099111}). The present work is an attempt to investigate this finiteness question systematically for some higher-dimensional manifolds.

\subsection{Klein actions on hyperk\"ahler manifolds}

Our initial purpose of this paper is to give a positive answer to the question on the finiteness of real structures for an important class of manifolds, called compact hyperk\"ahler manifolds (\emph{cf.}~\cite{Beauville}, \cite{HuyInventiones}). Recall that a compact K\"ahler manifold is called \emph{hyperk\"ahler} or \emph{irreducible holomorphic symplectic}, if it is simply connected and has a nowhere degenerate holomorphic $2$-form which is unique up to scalars. Equivalently, these are the simply connected compact K\"ahler manifolds with holonomy group equal to the symplectic group $Sp(n)$, where $n$ is the half of the complex dimension of the manifold. Compact hyperk\"ahler manifolds are the natural higher-dimensional generalizations of K3 surfaces. By the Beauville--Bogomolov Decomposition Theorem (\cite[Th\'eor\`eme~2]{Beauville}, \cite{Bogomolov}), compact hyperk\"ahler manifolds, complex tori and  (strict) Calabi--Yau varieties, are the fundamental building blocs of compact K\"ahler manifolds with vanishing (real) first Chern class. Our first main result is the following\,:

\begin{theorem}\label{main1}
Any compact hyperk\"ahler manifold has only finitely many real structures up to equivalence. 
\end{theorem}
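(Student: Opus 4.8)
The plan is to deduce Theorem~\ref{main1} from a finiteness statement about conjugacy classes of finite subgroups of the Klein automorphism group, for which the geometric input is the Morrison--Kawamata cone conjecture. First I would reformulate the problem group-theoretically. Writing $\KAut(X)$ for the group of holomorphic and anti-holomorphic automorphisms, there is a short exact sequence $1 \to \Aut(X) \to \KAut(X) \to \IZ/2\IZ \to 1$ recording holomorphicity. A real structure is exactly an involution $\sigma \in \KAut(X)$ mapping to the non-trivial class, and two real structures are equivalent precisely when they are conjugate under $\Aut(X)$; equivalently, by Borel--Serre, the equivalence classes form the cohomology set $H^{1}(\IZ/2\IZ, \Aut(X))$. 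Thus it suffices to show that the involutions in $\KAut(X) \setminus \Aut(X)$ fall into finitely many $\Aut(X)$-conjugacy classes. Since $X$ is hyperk\"ahler one has $H^{0}(X, T_{X}) = 0$, so $\Aut(X)$ is discrete; the difficulty is that it is in general neither finite nor arithmetic, so the classical Borel--Serre finiteness does not apply directly.

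Next I would pass to the cohomological representation $\rho \colon \KAut(X) \to O(H^{2}(X,\IZ))$ with respect to the Beauville--Bogomolov--Fujiki form. The kernel of $\rho$ lies in $\Aut(X)$, since an anti-holomorphic involution acts non-trivially on $H^{2,0}$ and hence on $H^{2}$; moreover this kernel is finite, by the known rigidity of hyperk\"ahler automorphisms acting trivially on $H^{2}$. Hence the problem splits into two parts: (a) proving that the image $\rho(\KAut(X))$ has only finitely many conjugacy classes of finite subgroups, and (b) showing that lifting a finite subgroup back through the finite kernel of $\rho$ produces only finitely many conjugacy classes. For (b), the lifts of a fixed finite subgroup are governed by the group cohomology of that subgroup with values in the finite kernel, which is a finite set, so only finitely many classes are introduced.

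The heart of the argument is part (a), where the Morrison--Kawamata cone conjecture enters. The image $\rho(\Aut(X))$ preserves the lattice $\NS(X)$ and the positive cone, on which the form has signature $(1, \ast)$, so it acts by isometries of a hyperbolic space; the cone conjecture, known for compact hyperk\"ahler manifolds, furnishes a rational polyhedral fundamental domain for this action on the nef (or movable) cone. A finite subgroup of isometries has a fixed point in the positive cone, by a barycentre argument, which after translating by the group can be taken to meet the fundamental domain; the rational polyhedrality then bounds the combinatorial possibilities and yields finitely many conjugacy classes. For the anti-holomorphic elements one works with the full $\KAut(X)$-action, checking that a real structure sends the distinguished cone to itself up to the finite-index $\Aut$-action, so that the same fixed-point-in-fundamental-domain argument applies.

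I expect the main obstacle to be the abstract finiteness lemma underlying (a): namely that a subgroup of $O(L)$ preserving a cone and admitting a rational polyhedral fundamental domain has finitely many conjugacy classes of finite subgroups, together with the bookkeeping needed to promote this from $\Aut(X)$ to the index-two extension $\KAut(X)$ and to reconcile it with the finite kernel in (b). Making the fixed-point argument interact cleanly with rationality of the fundamental domain, and ensuring that the counting is uniform over \emph{all} finite subgroups rather than handled one at a time, is where the real work lies.
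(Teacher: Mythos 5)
Your overall strategy for the projective case is essentially the paper's: reduce via Borel--Serre to finitely many conjugacy classes of finite (here, order-two) subgroups of $\KAut(X)$, split off a finite kernel acting trivially on cohomology, and handle the image on the N\'eron--Severi lattice by combining the cone conjecture (extended to $\KAut$, which requires Looijenga's theorem plus a trivial-stabilizer rational point) with a barycentre fixed-point argument and the Siegel property; the cohomological bookkeeping for lifting through the finite kernel via twisted $H^1$ is also exactly how the paper proceeds. A minor imprecision: you take $\rho$ into $O(H^2(X,\IZ))$, whereas the fundamental domain only controls the image in $O(\NS(X))$; bridging the two requires knowing that automorphisms acting trivially on $\NS(X)$ act on the transcendental lattice through a finite cyclic group, which is a Hodge-theoretic argument using the irreducibility of the transcendental Hodge structure --- and that argument itself uses projectivity.

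The genuine gap is that your proof covers only projective $X$, while the theorem is for all compact hyperk\"ahler manifolds, and a generic one is non-projective. Your key geometric claim --- that $\rho(\Aut(X))$ preserves a cone in $\NS(X)_\IR$ of signature $(1,\ast)$, hence acts by hyperbolic isometries with a rational polyhedral fundamental domain --- fails outside the projective case: by Huybrechts' criterion, $\NS(X)$ is hyperbolic if and only if $X$ is projective; otherwise it is elliptic (negative definite) or parabolic (degenerate), the ample cone is empty, and the barycentre-in-hyperbolic-space argument has no home. One cannot substitute the K\"ahler version of the cone conjecture either, since the K\"ahler cone sits in $H^{1,1}(X,\IR)$, which for non-projective $X$ carries no $\Gamma$-stable $\IQ$-structure or lattice, so both Looijenga's rational-polyhedrality machinery and the Siegel property (which needs a lattice-preserving action on a non-degenerate cone with a $\IQ$-structure) are unavailable. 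The paper explicitly notes that neither case's proof applies to the other, and in the non-projective case argues completely differently: it filters $\Aut(X)$ (and $\Bir(X)$, $\KAut(X)$) by normal subgroups defined via the action on the radical $R$ of $\NS(X)$, on $\NS(X)/R$, on $\NS(X)$, on $H^{2,0}(X)$, and on $H^2(X,\IZ)$, shows each graded quotient is finite or finitely generated abelian (using Oguiso's structure results and an explicit lattice computation for the parabolic case), and concludes by an inductive non-abelian group cohomology lemma. Your proposal would need this second, independent argument --- or some replacement for it --- to yield the stated theorem.
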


For K3 surfaces, which are the $2$-dimensional hyperk\"ahler manifolds, the work of Degtyarev--Itenberg--Kharlamov \cite[Appendix D]{RealEnriques} not only shows the finiteness of real structures for K3 surfaces but actually gives much stronger results in the broader setting of so-called Klein actions. Let us recall the definition\,: A \emph{Klein automorphism} is a holomorphic or anti-holomorphic diffeomorphism and a \emph{Klein action} on a complex manifold is a group action by Klein automorphisms (Definition~\ref{def:Klein}). We will only consider finite group Klein actions in this paper. Two finite group Klein actions are considered to be  equivalent if they are conjugate by a Klein automorphism of the complex manifold. In the case of K3 surfaces, we have the following result\,:

\begin{theorem}[{\cite[Theorem~D.1.1]{RealEnriques}}]
A complex K3 surface, projective or not, admits only finitely many faithful finite group Klein actions up to equivalence.
\end{theorem}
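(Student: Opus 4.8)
The plan is to reduce the statement, via the Global Torelli theorem, to a question about a discrete group acting on a cone, and then to combine the Morrison--Kawamata cone conjecture (known for K3 surfaces by Sterk) with an elementary fixed-point argument. Write $\Lambda = H^2(X,\IZ)$ for the K3 lattice, which is even, unimodular, of signature $(3,19)$. Every Klein automorphism $f$ of $X$ acts on $\Lambda$ by pullback $f^{*}$, an isometry, and by the strong form of Global Torelli---applied simultaneously to holomorphic and anti-holomorphic maps---the resulting homomorphism $\rho\colon \KAut(X)\to \mathrm{O}(\Lambda)$ is injective: a holomorphic automorphism is recovered from a Hodge isometry fixing an ample (Kähler) class, while an anti-holomorphic one is recovered from an isometry exchanging $H^{2,0}$ with $H^{0,2}$ and sending the Kähler cone to its opposite. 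Thus a faithful finite Klein action is exactly a finite subgroup $G \subseteq \KAut(X)$, and two actions are equivalent precisely when the subgroups are conjugate in $\KAut(X)$.

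One might hope to finish using the classical fact that $\GL_{22}(\IZ)$, and hence $\mathrm{O}(\Lambda)$, has only finitely many conjugacy classes of finite subgroups. This does bound the subgroups $\rho(G)$ up to conjugacy in $\mathrm{O}(\Lambda)$, but it does not control them up to conjugacy in the much smaller group $\KAut(X)$, which is the equivalence relation we must use. To bridge this gap I would work geometrically. The image $\rho(\KAut(X))$ is a discrete subgroup of $\mathrm{O}(\Lambda\otimes\IR)$, and after twisting the anti-holomorphic elements by $-1$ it preserves the connected component $\cC^{+}$ of $\{x : q(x) > 0\}$ containing the Kähler classes, as well as the Kähler cone inside it. Projectivizing $\cC^{+}$ gives a model of real hyperbolic space on which $\KAut(X)$ acts by isometries, properly discontinuously by discreteness. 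Any finite subgroup $G$ then fixes a point, namely the circumcenter of a $G$-orbit of a Kähler class, which lies in the closure of the convex, $G$-invariant Kähler cone.

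The crucial global input is the cone conjecture: in the projective case Sterk's theorem provides a rational polyhedral fundamental domain for the action of $\Aut(X)$ on the rational nef cone, and I would promote this to $\KAut(X)$, which contains $\Aut(X)$ with index at most two. Translating the fixed point of $G$ into this fundamental domain by an element of $\KAut(X)$ replaces $G$ by an equivalent subgroup whose fixed point lies in one fixed rational polyhedral region. From here the conclusion is the standard finiteness for properly discontinuous actions admitting a rational polyhedral fundamental domain: point stabilizers are finite, there are only finitely many of them up to conjugacy (one for each of the finitely many faces of the domain), and every finite subgroup is contained in the stabilizer of its fixed point. This bounds the conjugacy classes of finite subgroups of $\KAut(X)$, hence the equivalence classes of faithful finite Klein actions.

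The main obstacle is the cone-theoretic input: Sterk's rational polyhedrality, together with its extension to anti-holomorphic automorphisms and, most delicately, to the non-projective case. For a non-projective K3 surface the Kähler cone is not rational and the cone conjecture is not directly available, so there I would instead exploit the rigidity of the transcendental Hodge structure---the group of Hodge isometries of the transcendental lattice $T(X)$ is finite cyclic---to bound $\KAut(X)$ directly. A secondary technical point is verifying the Torelli statement for anti-holomorphic maps, which is routine but must be carried out carefully regarding orientation and the sign on $\cC^{+}$.
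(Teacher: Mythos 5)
Your projective-case argument is essentially the one this paper runs in \S 6 for the hyperk\"ahler generalization (Proposition \ref{prop:key}), specialized to K3 surfaces: average a class to get a $G$-fixed point in the cone, translate it into a rational polyhedral fundamental domain (Sterk, promoted to $\KAut^*(X)$ exactly as in Theorem \ref{thm: m-k conj for three groups}(3)), and conclude by the Siegel property that $G$ is conjugated into the finite set of motions $\varphi$ with $\varphi(\Sigma)\cap\Sigma\neq\{0\}$. Two small repairs are needed there. First, the fixed point must be the average of an \emph{ample} class under the $\dagger$-action, since Sterk's domain lives in $\NS(X)_\IR$, not in $H^{1,1}(X,\IR)$; and the argument then controls only the image of $G$ in $O(\NS(X))$, so you must separately prove that the kernel $\KAut^{\#}(X)$ of $\KAut(X)\to O(\NS(X))$ is finite (Proposition \ref{prop:KAutkernelfinite}; your appeal to injectivity on all of $H^2$ does not substitute, because the fundamental domain is not in $H^2$). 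Second, ``two actions are equivalent precisely when the subgroups are conjugate'' is false as a biconditional: two faithful actions with the same image differ by an automorphism of $G$. The implication you need survives, but only via the bookkeeping of Lemma \ref{lemma:FiniteConj} (finitely many subgroups up to conjugacy, each admitting finitely many isomorphisms from $G$).

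The genuine gap is your non-projective case. The fact you propose to use --- that the group of Hodge isometries of the transcendental lattice $T(X)$ of a K3 surface is finite cyclic --- is a theorem only when the Hodge structure on $T(X)$ is irreducible and \emph{polarizable}, i.e.\ when $X$ is projective; this is precisely how it is used in Proposition \ref{prop:KAutkernelfinite}. For non-projective K3 surfaces it fails outright: McMullen's non-projective K3 surfaces with Siegel disks have Picard number $0$, so $T(X)=H^2(X,\IZ)$, and carry an automorphism of infinite order acting on $T(X)$ as an infinite-order Hodge isometry whose eigenvalue on $H^{2,0}$ is not a root of unity. Consistently with this, Oguiso's theorem (Theorem \ref{thm:Oguiso}) only asserts that $\Aut(X)=\Bir(X)$ of a non-projective K3 is \emph{almost abelian} of rank $\leq\max\{1,\rho-1\}$ --- it can contain $\IZ$ --- so $\KAut(X)$ cannot be ``bounded directly,'' and you must prove finiteness of conjugacy classes of finite subgroups (and of the relevant $H^1$'s) inside a possibly infinite group. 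This is where the actual work lies: the paper filters $\Aut(X)$ by the normal subgroups acting trivially on the radical $R$, on $\NS(X)/R$, on $\NS(X)$, on $H^{2,0}(X)$, and on $H^2(X,\IZ)$, checks each graded piece is finite or finitely generated abelian (the $H^{2,0}$-piece being $\IZ$ or trivial by Oguiso --- exactly because it need not be finite), and concludes with the non-abelian cohomology Lemma \ref{lemma:filtration}, where normality of every term in the ambient group is essential (Remark \ref{rm:NormalityPb} notes that even the original argument of Degtyarev--Itenberg--Kharlamov is flawed on this point). Without some version of this filtration-plus-twisting argument, your sketch does not close in the non-projective case.
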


Our second main result generalizes the previous theorem for higher-di\-men\-sion\-al hyperk\"ahler manifolds\,:

\begin{theorem}\label{main2}
Any compact hyperk\"ahler manifold has only finitely many faithful finite group Klein actions up to equivalence. 
\end{theorem}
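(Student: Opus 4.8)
The plan is to reduce the statement to a finiteness property for the integral isometry group of the Beauville--Bogomolov--Fujiki (BBF) lattice on $H^{2}(X,\IZ)$, and then to use the Morrison--Kawamata cone conjecture to control finite subgroups through their fixed points in the nef cone. By Lemma~\ref{lemma:Index2}, $\KAut(X)$ contains $\Aut(X)$ as a subgroup of index at most $2$, and the assertion is equivalent to saying that $\KAut(X)$ has only finitely many conjugacy classes of finite subgroups.

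First I would linearise the problem. Every Klein automorphism acts on $H^{2}(X,\IZ)$ preserving the BBF form $q$, which gives a representation $\KAut(X)\to\operatorname{O}(H^{2}(X,\IZ),q)$. Its kernel $K$ is finite: an anti-holomorphic automorphism interchanges $H^{2,0}$ and $H^{0,2}$ and so cannot act trivially, whence $K\subseteq\Aut(X)$ coincides with the (finite) kernel of $\Aut(X)\to\operatorname{O}(H^{2}(X,\IZ))$. Since $K$ is finite and the preimage of any finite subgroup of the image $\Gamma:=\overline{\KAut(X)}$ is again finite, it suffices to prove that $\Gamma$ has only finitely many conjugacy classes of finite subgroups; each such class then lifts to finitely many conjugacy classes of finite subgroups of $\KAut(X)$.

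Next I would install the hyperbolic geometry. The group $\Gamma$ acts by isometries on the projectivised positive cone $\IH:=\{x\in H^{1,1}(X,\IR) : q(x)>0\}/\IR^{*}$, a real hyperbolic space on which the central element $-\id$ acts trivially. A holomorphic automorphism preserves the K\"ahler cone, whereas an anti-holomorphic one sends it to its negative, since pulling back a K\"ahler form by an anti-holomorphic diffeomorphism reverses positivity; in either case the image $N\subseteq\IH$ of the nef cone is preserved by all of $\Gamma$. A finite subgroup $\bar G\le\Gamma$ then has a fixed point in the interior of $N$: averaging a K\"ahler class over $\bar G$, after correcting each anti-holomorphic element by the sign $-\id$ (which acts trivially on $\IH$), produces a class in the interior of the nef cone whose image in $\IH$ is fixed by $\bar G$.

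The crucial input is the Morrison--Kawamata cone conjecture, which is known for compact hyperk\"ahler manifolds: the image of $\Aut(X)$ acts on the nef cone with a rational polyhedral fundamental domain, and as $\Gamma$ contains this image with index at most $2$, the same holds for $\Gamma$ with some fundamental domain $\Pi$. Rational polyhedrality gives the local finiteness that $S:=\{\delta\in\Gamma : \delta\overline{\Pi}\cap\overline{\Pi}\neq\varnothing\}$ is finite. Given $\bar G$ with fixed point $p\in N$, I would conjugate $\bar G$ within $\Gamma$ so that $p\in\overline{\Pi}$; then every $\delta\in\bar G$ fixes $p$, so $p\in\delta\overline{\Pi}\cap\overline{\Pi}$ and hence $\bar G\subseteq S$. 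Since $S$ is finite there are only finitely many such $\bar G$, so $\Gamma$ has finitely many conjugacy classes of finite subgroups; together with the reduction above this proves the theorem. I expect the main obstacle to be the precise formulation and application of the cone conjecture: one must secure a rational polyhedral fundamental domain for the whole group $\Gamma$, including its anti-holomorphic part, with the local finiteness making $S$ finite, and one must check that the fixed point of $\bar G$ lies in the region actually tiled by the translates of $\Pi$. A secondary point is the compatibility of anti-holomorphic automorphisms with the nef cone in the projective picture, which rests on the computation that $\sigma^{*}$ of a K\"ahler class is the negative of a K\"ahler class.
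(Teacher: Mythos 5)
Your projective-case argument is essentially the paper's own: the averaging trick producing a fixed ample class, conjugating that fixed point into a rational polyhedral fundamental domain for the extended Klein action, and bounding the conjugated group by the finite set $S$ of domain-translates meeting the domain is exactly Proposition \ref{prop:key}, resting on part (3) of Theorem \ref{thm: m-k conj for three groups}. Your reduction through the finite kernel of $\KAut(X)\to O(H^{2}(X,\IZ))$ is correct, and in fact shortcuts the paper's non-abelian cohomology bookkeeping: a finite subgroup of $\KAut(X)$ lies in the preimage of its image, that preimage is finite, and conjugating the image into one of finitely many representatives conjugates the subgroup into one of finitely many finite groups. Two technical points you gloss over, though: finiteness of $S$ is not a formal consequence of rational polyhedrality of the fundamental domain but requires the Siegel property (Proposition \ref{prop: Siegel}; in the paper, $\Sigma$ has finitely many rays and only finitely many translates share a given ray); and the group fed into the cone machinery must be the image $\KAut^{*}(X)$ in the motions of the ample cone, not your $\Gamma\subseteq O(H^{2}(X,\IZ))$ --- elements acting trivially on $\NS(X)$ but nontrivially on the transcendental lattice would otherwise all sit in $S$, which is harmless only because that subgroup is finite (Proposition \ref{prop:KAutkernelfinite}), a fact that needs saying.

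The genuine gap is the non-projective case. Theorem \ref{main2} is asserted for \emph{every} compact hyperk\"ahler manifold, and your entire mechanism --- a rational polyhedral fundamental domain on the nef cone --- exists only for projective $X$. By Huybrechts' projectivity criterion, $X$ is projective if and only if $\NS(X)$ is hyperbolic; for non-projective $X$ the lattice $\NS(X)$ is elliptic or parabolic, there are no ample classes to average, and $H^{1,1}(X,\IR)$ carries no usable $\IQ$-structure (its rational points are just $\NS(X)_{\IQ}$, possibly zero), so ``rational polyhedral'' has no meaning there. Amerik--Verbitsky's K\"ahler version of the cone theorem yields finitely many orbits on faces of the K\"ahler cone, not a fundamental domain of the kind your argument needs, and the paper stresses that neither case's proof applies to the other. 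The non-projective case is instead proved (Theorem \ref{main2-non-proj}, \S\ref{sect:Non-proj}) by a completely different route: a filtration of $\Aut(X)$ and $\Bir(X)$ by normal subgroups whose graded pieces are finite or finitely generated abelian, built from Oguiso's structure results and Lemma \ref{lemma:TNSvsH2}, combined with the group-cohomological finiteness Lemma \ref{lemma:filtration}. Without an argument of this kind, your proof establishes the theorem only for projective $X$.
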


Theorem \ref{main1} will be deduced from Theorem \ref{main2} (\S \ref{sect:FiniteReal}). For Theorem \ref{main2}, what we actually prove is the following stronger result, whose part concerning the Klein automorphism group is equivalent to Theorem \ref{main2}, by Remark \ref{rem:bridge} and Lemma \ref{lemma:FiniteConj}.

\begin{theorem}[Theorems \ref{main2-proj}  $\&$ \ref{main2-non-proj}]\label{main2'}
For a compact hyperk\"ahler manifold, the Klein automorphism group, the automorphism group, as well as the birational automorphism group, contain only finitely many conjugacy classes of finite subgroups.
\end{theorem}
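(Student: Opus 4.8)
The plan is to reduce everything to the linear action on the second cohomology lattice and then exploit the Morrison--Kawamata cone conjecture. Write $\psi\colon \Aut(X)\to O(H^2(X,\IZ))$ for the natural representation preserving the Beauville--Bogomolov--Fujiki form $q$; by the global Torelli theorem this homomorphism has finite kernel, and the analogous statement holds for $\Bir(X)$ and, after a sign twist on the anti-holomorphic part (see below), for $\KAut(X)$. Set $\bar G\coloneqq\psi(G)$ for $G$ any of the three groups. The first reduction is the elementary observation that if $\bar G$ has only finitely many conjugacy classes of finite subgroups, then so does $G$: given a finite subgroup $H\le G$, its image $\psi(H)$ is finite, hence $\bar G$-conjugate to one of finitely many representatives $\bar H_i$; after conjugating $H$ by a lift of the conjugating element we get $H\le \psi^{-1}(\bar H_i)$, and since $\ker\psi$ is finite this preimage is itself a \emph{finite} group, of order $|\ker\psi|\cdot|\bar H_i|$, which therefore contains only finitely many subgroups. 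Thus the whole problem is transported to the arithmetic-type group $\bar G$.

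To handle $\bar G$ I would invoke the cone conjecture, which is the deep geometric input. For $\Aut(X)$ on a projective $X$ it provides a rational polyhedral fundamental domain $\Pi$ for the action of $\bar G$ on the ample (nef) cone $\cC\subseteq \NS(X)_\IR$; for $\Bir(X)$ one uses instead the movable cone, and for $\KAut(X)$ one twists $\psi$ by $-1$ on anti-holomorphic elements so that the ample cone is again preserved, reducing to the same picture. The argument for $\bar G$ then runs as follows. A finite subgroup $\bar H\le \bar G$ fixes a point of $\cC$: averaging any ample (resp.\ movable) class over $\bar H$ produces an $\bar H$-invariant class $v\in\cC$, which may be taken rational. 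Because $v$ lies in the positive cone we have $q(v)>0$, so by the signature $(1,\rho-1)$ of the relevant lattice the orthogonal complement $v^\perp$ is negative definite; hence $\Stab_{\bar G}(v)$ embeds into the finite orthogonal group of a definite lattice and is finite. Using the fundamental domain I can conjugate $v$ into $\overline\Pi$, so that $\bar H$ lands in the stabilizer of a point of $\overline\Pi$. Since $\Pi$ is rational polyhedral it has finitely many faces, the face-pairing elements of $\bar G$ are finite in number, and the point-stabilizers attached to $\overline\Pi$ therefore fall into finitely many finite subgroups up to conjugacy; combined with the finite-kernel lifting above, this yields finitely many conjugacy classes of finite subgroups in each of $\Aut(X)$, $\Bir(X)$ and $\KAut(X)$.

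I expect the two genuine obstacles to be, first, marshalling the correct version of the cone conjecture in each case — the ample-cone statement for $\Aut$, the movable-cone statement for $\Bir$, and checking that the sign-twisted Klein representation is a genuine homomorphism preserving the relevant cone — and second, the non-projective case, which must be isolated (hence the separate Theorem~\ref{main2-non-proj}). When $X$ is not projective there is no ample class and the positivity argument on $\NS(X)_\IR$ breaks down: one must instead analyze the action on the full Picard/transcendental decomposition of $H^2$, where the Néron--Severi part is (semi)negative definite and the image $\bar G$ is correspondingly constrained, so that finiteness follows from a more direct structural study of $\bar G$ rather than from a fundamental-domain tiling of an ample cone. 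Verifying that the anti-holomorphic elements interact correctly with this decomposition, and that the finiteness of point-stabilizers and of the faces of $\Pi$ really does bound the conjugacy classes, is where the careful work lies.
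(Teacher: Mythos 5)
Your projective-case argument is, at its core, the paper's own: the extended cone conjecture (Theorem \ref{thm: m-k conj for three groups}) gives a rational polyhedral fundamental domain on $\cA^{+}(X)$ (resp.\ $\MV^{+}(X)$ for $\Bir$, and for $\KAut$ via the sign-twisted $\dagger$-action), a finite subgroup fixes an averaged point of the cone, one conjugates that point into the domain, and the stabilizer lands in the finite set of translates touching the domain. Your finite-kernel lifting lemma is correct and is in fact a pleasant simplification of the paper's bookkeeping: where the paper runs the extension $1\to\KAut^{\#}(X)\to\KAut(X)\to\KAut^{*}(X)\to 1$ through non-abelian $H^{1}$ with twisting (Lemmas \ref{lemma:FiniteConj} and \ref{lemma:twist}), you observe directly that a finite subgroup conjugates into the finite group $\psi^{-1}(\bar H_i)$, which has finitely many subgroups; for the statement at hand this works, granting the paper's inputs that the kernels are finite (Proposition \ref{prop:KAutkernelfinite}, Corollary \ref{cor:BirKerFinite}, which themselves need Huybrechts' finiteness and Markman's Torelli theorem to see that a birational map trivial on $\NS(X)$ is biregular). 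One step you assert but do not justify: that ``the face-pairing elements of $\bar G$ are finite in number.'' This is not formal --- the existence of a fundamental domain alone does not exclude infinitely many translates meeting $\overline\Pi$ along its boundary. It is exactly Looijenga's Siegel property (Proposition \ref{prop: Siegel}): for polyhedral cones $\Pi_1,\Pi_2$ in $C^{+}$, only finitely many $\gamma\cdot\Pi_1$ meet $\Pi_2$ nontrivially. You must invoke it; it is the finiteness engine of the whole projective case.

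The genuine gap is the non-projective case, where you offer a plan but no argument, and the plan as stated would not go through. There is no ample cone and no cone conjecture there, and your proposed ``structural study of the image $\bar G\le O(H^2(X,\IZ))$'' meets a real obstruction: the property of having finitely many conjugacy classes of finite subgroups does \emph{not} descend to infinite-index subgroups of arithmetic groups, and $\Aut(X)$ need not be commensurable with an arithmetic group (Borcherds, Totaro). Moreover your finite-preimage trick fails precisely here, because the relevant successive quotients are \emph{infinite}: the paper (Theorem \ref{main2-non-proj}) filters $A=\Aut(X)$ or $\Bir(X)$ by five subgroups, all normal in $A$, defined by acting trivially on the radical $R$ of $\NS(X)$, on $\NS(X)/R$, on $\NS(X)$, on $H^{2,0}(X)$, and on $H^{2}(X,\IZ)$, and shows each subquotient is finite or finitely generated abelian --- using Oguiso's theorem that the image of $\Bir(X)$ in $\GL(H^{2,0}(X))\simeq\IC^{*}$ is $\IZ$ or trivial, and a nontrivial Gram-matrix computation in the parabolic case (Lemma \ref{lemma:TNSvsH2}) showing that triviality on $\NS(X)$ and $H^{2,0}(X)$ forces triviality on all of $H^{2}(X,\IZ)$. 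A quotient isomorphic to $\IZ$ cannot be absorbed by conjugating into a finite preimage, so one genuinely needs the cohomological filtration machinery (Lemma \ref{lemma:filtration}: finiteness of $H^{1}$ with finitely generated abelian coefficients, plus twisting, with normality of the filtration in the ambient group) or some substitute for it; your proposal supplies neither.
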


To prove Theorem \ref{main2'} we will distinguish the projective case (\S \ref{sect:Proj}) and the non-projective case (\S \ref{sect:Non-proj}), and the proof for each case does not apply to the other. In the projective case, the geometry of the ample cone (resp.~movable cone) will play a crucial role\,: it defines a non-degenerate convex cone in the space $\NS(X)_\IR$, upon which the (Klein) automorphism group (resp.~birational automorphism group) acts. We will then use results from convex geometry to deal with such actions (\S\ref{sect: convex geometry}) and combine them with the recent work by Amerik--Verbitsky on the so-called Morrison--Kawamata cone conjecture (\S\ref{sect: cone conjectures}). In the non-projective case we will approach the problem from the point of view of non-abelian group cohomology, which will be reviewed in \S\ref{sect: group cohomology}.

We give also some rudimentary results towards the existence of real structures on hyperk\"ahler manifolds in \S \ref{sect: existence}. A complex manifold $X$ admitting a real structure is isomorphic to its conjugate $\bar{X}$, and due to Verbitsky's Global Torelli Theorem for hyperk\"ahler manifolds \cite{VerbitskyTorelli}, we give a description of the periods of those hyperk\"ahler manifolds bimeromorphic to their conjugate, see Proposition \ref{prop:BirToConj}. Furthermore, extending the Torelli Theorem of Markman \cite{Markman}, we provide Theorem \ref{thm:TorelliAnti} as a Hodge-theoretic characterization of those hyperk\"ahler manifolds which admit anti-holomorphic automorphisms.

Various examples of real structures on compact hyperk\"ahler manifolds are constructed in \S \ref{subsect:Examples}: Hilbert schemes and more generally moduli spaces of stable sheaves on K3 surfaces, generalized Kummer varieties and more generally the Albanese fibers of moduli spaces of stable sheaves on abelian surfaces, Fano varieties of lines on cubic fourfolds and Debarre--Voisin hyperk\"ahler fourfolds \emph{etc}.

\subsection{Finite presentation of automorphism groups}\label{sect: finite presentation intro}

Thanks to the work of Sterk \cite{Sterk}, it is known that the automorphism group of a projective K3 surface is always finitely generated, \emph{cf.}~\cite[Corollary~\textbf{15}.2.4]{HuyK3}. We ask whether this finiteness property also holds for automorphism groups, or bimeromorphic automorphism groups, of all compact hyperk\"ahler manifolds.

On one hand, in the non-projective case, the following result of Oguiso provides a quite satisfying and precise answer\,:
\begin{theorem}[\cite{Oguiso}]\label{thm:Oguiso}
Let $X$ be a \emph{non-projective} compact hyperk\"ahler manifold. Then its group of bimeromorphic automorphisms $\Bir(X)$ is an almost abelian group of rank at most $\max\{1, \rho(X)-1\}$, where $\rho(X)$ is the Picard rank of $X$. Hence the same conclusion holds for the automorphism group $\Aut(X)$ as well. In particular, $\Bir(X)$ and $\Aut(X)$ are finitely presented. 
\end{theorem}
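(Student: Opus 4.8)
The plan is to linearize the problem through the action of $\Bir(X)$ on second cohomology and then to exploit the fact that non-projectivity forces the relevant sub-lattice to be negative semi-definite. Recall that $X$, being hyperk\"ahler, is simply connected, so $\Pic(X)=\NS(X)$ is a free abelian group of rank $\rho(X)$, carrying the restriction of the Beauville--Bogomolov--Fujiki form $q$, which has signature $(3, b_2(X)-3)$ on $H^2(X,\IZ)$. Every bimeromorphic self-map of a hyperk\"ahler manifold is an isomorphism in codimension one, hence induces a Hodge isometry of $(H^2(X,\IZ),q)$; this gives a homomorphism $\rho_2\colon \Bir(X)\to O(H^2(X,\IZ),q)$ whose image preserves both $\NS(X)$ and the transcendental lattice $T(X)=\NS(X)^{\perp}$. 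First I would record that the kernel of $\rho_2$ is finite (a bimeromorphic automorphism acting trivially on $H^2$ has finite order), so it suffices to control the image.

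Next I would analyze the two factors of the image separately. On $T(X)$ the action is a Hodge isometry fixing the line $H^{2,0}(X)=\IC\sigma$ up to a scalar; the standard eigenvalue/cyclotomic argument shows that such isometries form a finite cyclic group, so the image of $\Bir(X)$ in $O(T(X))$ is finite. The substantial part therefore concerns the action on $\NS(X)$. Here the key structural input is Huybrechts' projectivity criterion: since $X$ is \emph{non-projective}, $\NS(X)$ contains no class of positive $q$-square, i.e.\ $q|_{\NS(X)}$ is negative semi-definite. Because $q$ restricted to the real $(1,1)$-classes has signature $(1, b_2(X)-3)$, in which every totally isotropic subspace has dimension at most $1$, the radical $R:=\NS(X)\cap\NS(X)^{\perp}$ of $q|_{\NS(X)}$ has rank at most $1$.

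This dichotomy drives the conclusion. If $R=0$ then $q|_{\NS(X)}$ is negative definite, its isometry group $O(\NS(X))$ is finite, and $\Bir(X)$ is finite, of rank $0$. If $R=\IZ\ell$ with $\ell$ isotropic, then $O(\NS(X))$ preserves $R$ and acts with finite image on the negative definite quotient $\NS(X)/R$ of rank $\rho(X)-1$; the kernel of $O(\NS(X))\to O(\NS(X)/R)\times\Aut(R)$ consists exactly of the maps $x\mapsto x+\lambda(x)\,\ell$ with $\lambda\in\Hom(\NS(X)/R,\IZ)$ --- each of these is automatically an isometry because $\ell$ is isotropic and orthogonal to everything, and each preserves the integral lattice --- so this kernel is free abelian of rank $\rho(X)-1$ and of finite index. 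In either case $O(\NS(X))$, and hence $\Bir(X)$, is almost abelian of rank at most $\max\{1,\rho(X)-1\}$.

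Finally, almost abelian groups are finitely presented: a finite-index free abelian subgroup $\IZ^{r}$ is finitely presented, and finite presentation passes both to finite extensions and to finite-index overgroups. Since $\Aut(X)$ is a subgroup of $\Bir(X)$, intersecting with the free abelian finite-index subgroup shows $\Aut(X)$ inherits the same almost abelian structure, so it too is finitely presented, of rank at most $\max\{1,\rho(X)-1\}$. I expect the main obstacle to be a clean justification of the two finiteness statements --- that $\ker\rho_2$ is finite and that the transcendental action is finite cyclic --- together with verifying that the semi-definite isometry computation respects the integral (not merely rational) structure of $\NS(X)$; the negative (semi-)definiteness forced by non-projectivity is the conceptual heart, while these are the technical points requiring care.
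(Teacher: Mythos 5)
There is a genuine gap, and it sits exactly at your transcendental step. Your claim that ``the standard eigenvalue/cyclotomic argument shows that such isometries form a finite cyclic group, so the image of $\Bir(X)$ in $O(T(X))$ is finite'' fails precisely because $X$ is non-projective. The Kronecker-type argument needs the scalar $\alpha$ by which $f^*$ acts on $H^{2,0}(X)$ to be an algebraic integer \emph{all} of whose conjugates have modulus $1$; but the conjugates of $\alpha$ are among the eigenvalues of $f^*$ on the indefinite BBF lattice $H^2(X,\IZ)$, and these can leave the unit circle (Salem-type spectra of positive-entropy transformations). In the projective case one rescues this via the polarizable irreducible Hodge structure on $T(X)$ --- this is exactly the mechanism the paper invokes in the proof of Proposition \ref{prop:KAutkernelfinite} --- and polarizability is precisely what non-projectivity destroys. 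Indeed, Oguiso proves that the image of $\Bir(X)\to \GL(H^{2,0}(X))\simeq \IC^*$ is either $\IZ$ or trivial (the paper records this as the subquotient $A_3/A_4$ in the proof of Theorem \ref{main2-non-proj}), and the infinite case genuinely occurs: McMullen's non-projective K3 surfaces carrying Siegel disks have $\rho(X)=0$ yet admit automorphisms of infinite order, acting on $H^{2,0}$ by a modulus-one scalar that is not a root of unity.

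This sinks your dichotomy downstream: your conclusion ``if $R=0$ then $\Bir(X)$ is finite, of rank $0$'' is contradicted by those $\rho=0$ examples, and it is exactly why Oguiso's bound is $\max\{1,\rho(X)-1\}$ rather than $\rho(X)-1$ --- the constant $1$ accounts for a possible infinite cyclic factor from the transcendental action when the N\'eron--Severi action is finite. The correct architecture (Oguiso's, mirrored by the paper's filtration in the proof of Theorem \ref{main2-non-proj}) inserts between ``trivial on $\NS(X)$'' and ``trivial on $H^2(X,\IZ)$'' the a priori infinite subquotient $\im\bigl(\Bir(X)\to \GL(H^{2,0}(X))\bigr)$, and proves separately (the paper's Lemma \ref{lemma:TNSvsH2}) that acting trivially on $\NS(X)$ \emph{and} on $H^{2,0}(X)$ forces triviality on all of $H^2(X,\IZ)$, whose stabilizer is finite by Huybrechts; moreover, bounding the \emph{combined} rank by $\max\{1,\rho(X)-1\}$ (not $\rho(X)$, which your two factors would naively allow in the parabolic case) requires Oguiso's further analysis relating the radical action to the $H^{2,0}$-scalar. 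Your lattice-theoretic treatment of $O(\NS(X))$ in the elliptic and parabolic cases is sound as far as it goes, and the finite-presentation endgame is fine, but without controlling the transcendental factor neither the almost-abelian structure nor the rank bound follows. Note finally that the paper does not reprove this theorem --- it cites Oguiso --- so the fair comparison point is its non-projective filtration, where the subquotient you are missing appears explicitly.
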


Here an \emph{almost abelian group of rank $r$} means a group isomorphic to $\IZ^{r}$ up to finite kernel and cokernel, see \cite[\S 8]{Oguiso} for the precise definition.

\bigskip
On the other hand, for a \emph{projective} hyperk\"ahler variety $X$, $\Aut(X)$ and $\Bir(X)$ are of more complicated nature. For example, in \cite{MR2231119} and \cite[Theorem~1.6]{MR2296437}, Oguiso shows that these two groups are not necessarily \emph{almost abelian}, \emph{i.e.}~abelian up to finite kernel and cokernel (see \cite[\S 8]{Oguiso}). Nevertheless, using Global Torelli Theorem (\cite{VerbitskyTorelli}, \cite{Markman}, \cite{HuybrechtsTorelli}), Boissi\`ere and Sarti  \cite[Theorem~2]{MR2957195} prove that $\Bir(X)$ is finitely generated.
The finite-generation problem for $\Aut(X)$ remained open ever since (\cite[Question~1.5]{MR2231119}, \cite[Question~1]{MR2957195}).
Our third main result is to give this question an affirmative, and stronger, answer\,:

\begin{theorem}\label{main3}
For any projective hyperk\"ahler manifold $X$, the automorphism group $\Aut(X)$ and the birational automorphism group $\Bir(X)$ are finitely presented and satisfy $(\FP_{\infty})$ property.
\end{theorem}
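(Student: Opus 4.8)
The plan is to reduce the finiteness properties of $\Aut(X)$ and $\Bir(X)$ to a geometric action furnished by the Morrison--Kawamata cone conjecture, and then to feed that action into the standard machinery of geometric group theory that relates finiteness properties of a group to its cocompact actions on contractible complexes.

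\textbf{Step 1 (reduction to the linear action).} I would first record that the representation $\Aut(X)\to \GL(H^{2}(X,\IZ))$ has finite kernel and that the image of $\Aut(X)$ in the orthogonal group $O(T(X))$ of the transcendental lattice is finite; combining these shows that the homomorphism $\rho\colon\Aut(X)\to O(\NS(X))$ has finite kernel. Since both finite presentation and $(\FP_{\infty})$ are inherited through group extensions with finite kernel, it suffices to establish them for the discrete subgroup $\Gamma\coloneqq\rho(\Aut(X))\subset O(\NS(X))$. The identical reduction applies to $\Bir(X)$.

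\textbf{Step 2 (the cone and its fundamental domain).} By the cone conjecture for projective hyperk\"ahler manifolds (Amerik--Verbitsky), $\Gamma$ acts on the rational closure $\cK$ of the ample cone (for $\Bir(X)$, of the movable cone) with a \emph{rational polyhedral} fundamental domain $\Pi$. The cone $\cK$ is convex, hence contractible; since the Beauville--Bogomolov form has signature $(1,\rho(X)-1)$ on $\NS(X)_{\IR}$, projectivizing realizes $\Gamma$ as a discrete group of isometries of hyperbolic space $\IH^{\rho(X)-1}$ preserving the convex set $\IP(\cK)$, with $\Pi$ mapping to a finite-sided (possibly cusped) fundamental polyhedron. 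Already at this stage the finitely many facet-pairing elements of $\Pi$ generate $\Gamma$, recovering finite generation.

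\textbf{Step 3 (a contractible complex and Brown's criterion).} The translates $\{\gamma\Pi\}_{\gamma\in\Gamma}$ tile $\cK$, and I would assemble from this tiling a contractible $\Gamma$-CW complex in order to apply Brown's criterion: a group acting cellularly and cocompactly on a contractible CW complex with every cell-stabilizer of type $F_{\infty}$ (resp.\ finitely presented) is itself of type $F_{\infty}$ (resp.\ finitely presented), and $F_{\infty}$ implies $(\FP_{\infty})$. In the interior of $\cK$ the tiling is locally finite and the cell-stabilizers are finite, because the stabilizer in $O(\NS(X))$ of a class of positive Beauville--Bogomolov square is compact, hence finite after intersecting with the discrete group $\Gamma$.

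\textbf{Step 4 (the main obstacle: the boundary at infinity).} The crux is that this tiling is \emph{not} locally finite along the isotropic boundary of $\cK$: infinitely many translates of $\Pi$ accumulate at the rational null rays, which are exactly the cusps of $\Gamma$, so neither cocompactness nor the $F_{\infty}$-stabilizer hypothesis survives on the closure. The finite-sidedness of $\Pi$ ensures that the cusps fall into finitely many $\Gamma$-orbits, and I would resolve the difficulty in Borel--Serre fashion by deleting an equivariant family of disjoint open horoball neighborhoods of the cusps, producing the truncated convex core of $\IP(\cK)$, a manifold-with-corners on which $\Gamma$ now acts cocompactly. The newly created boundary cells have parabolic stabilizers, which are virtually abelian and thus of type $F_{\infty}$ and finitely presented, so Brown's criterion still applies. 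Verifying that this truncation preserves contractibility and that the quotient has finite skeleta is the technical heart of the argument; once it is in place, $\Gamma$ is finitely presented and satisfies $(\FP_{\infty})$, and Step 1 transfers both properties to $\Aut(X)$ and to $\Bir(X)$.
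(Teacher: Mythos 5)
Your proposal is correct in outline but takes a genuinely different route at the decisive step. You share with the paper the reduction of your Step 1: the kernel of $\Aut(X)\to O(\NS(X))$ is finite (the paper's Proposition \ref{prop:KAutkernelfinite}), likewise the kernel $\Bir^{\#}(X)$ for the birational group (Corollary \ref{cor:BirKerFinite} --- note that there the paper needs Markman's Torelli theorem to show $\Bir^{\#}(X)\subseteq \Aut(X)$, a point your ``identical reduction'' glosses over, though your assertions are true), and finite presentation together with $(\FP_{\infty})$ pass through extensions with finite kernel (Proposition \ref{prop:Extension}); both arguments then feed in the cone conjecture (Theorem \ref{thm: m-k conj for three groups}). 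Where you diverge: the paper simply cites Looijenga's Corollaries 4.15--4.16 (Proposition \ref{prop: fin presented}), which say that a subgroup $\Gamma\leq\GL(V)$ preserving a lattice and a non-degenerate open convex cone $C$, admitting a polyhedral $\Pi\subseteq C^{+}$ with $\Gamma\cdot\Pi\supseteq C$, is automatically finitely presented and of type $(\VFL)$ of length $\leq \dim V-1$; you instead propose to reprove this statement by hand in the projectivized hyperbolic picture via Brown's criterion and a Borel--Serre-style horoball truncation. Your route is sound in principle --- the ideal contact of $\Pi$ is indeed confined to finitely many rational isotropic rays, since signature $(1,\rho-1)$ admits no isotropic planes, and cusp stabilizers are virtually abelian of type $(\FP_{\infty})$ because a lattice-preserving isometry fixing a primitive isotropic ray scales its generator by $\pm 1$ and then acts as a discrete Euclidean group on the horosphere quotient --- and it is close in spirit to the alternative CAT(0) proof of Kurnosov--Yasinsky mentioned in the paper's remark after Theorem \ref{main3}. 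What each approach buys: yours is self-contained and geometrically transparent, and recovers finite generation already from the facet pairings; the paper's is far shorter and yields the stronger $(\VFL)$ conclusion with an explicit length bound, precisely because the ``technical heart'' you flag --- contractibility of the truncated space, local finiteness and cocompactness near the cusps, finite skeleta of the quotient --- is exactly the Siegel-set analysis carried out once and for all in Looijenga's paper (and its finiteness input is the Siegel property, Proposition \ref{prop: Siegel}). Be aware that those unverified truncation steps would constitute the bulk of a complete write-up of your plan; as it stands your argument is a correct strategy with its hardest lemma outsourced but not proved.
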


This result contrasts to the examples of Lesieutre \cite{Lesieutre2017} and Dinh--Oguiso \cite{DinhOguiso} mentioned above.
See \S \ref{sect:FiniteGeneration} for the notion of $(\FP_{\infty})$ property and the proof of Theorem \ref{main3}. 

\begin{rem}
After the first version of our paper appeared on arXiv, the preprint \cite{KurnosovYasinsky} of Kurnosov and Yasinsky provided another proof of Theorem \ref{main2'} and Theorem \ref{main3}, with the key point being again the cone conjectures. The main difference is that Looijenga's results in convex geometry we used here are replaced by results on the geometry of CAT(0)-spaces.
\end{rem}

\smallskip
\noindent\textbf{Notation and convention\,:} 
\begin{itemize}
\item For a complex manifold, an \emph{automorphism} is always holomorphic unless we say explicitly \emph{anti-holomorphic} or \emph{Klein}.
\item As we will deal a lot with maps and composition of maps, we will drop the composition symbol $\circ$ sometimes. So $fg$ means $f \circ g$, \emph{i.e.}~$(fg)(x) = f(g(x))$.
\item A map between two complex vector spaces is called anti-linear or $\overline\IC$-linear, if it is $\IR$-linear and anti-commutes with the multiplication by $\sqrt{-1}$.
\end{itemize}

\smallskip
\noindent\textbf{Acknowledgements\,:} We are grateful to Ekaterina Amerik, Samuel Boissi\`ere, Kenneth Brown, Gr\'egoire Menet, Giovanni Mongardi and Jean-Yves Welschinger for helpful discussions. The work started during the second Japanese-European Symposium on symplectic varieties and moduli spaces at Levico Terme in September 2017. We would like to thank the organizers and other participants of the conference.

\section{Klein automorphisms and real structures}

As alluded to in the introduction, anti-holomorphic automorphisms will play an equally important role as holomorphic ones in real algebraic geometry. We start with the notion that comprises both.

\begin{definition}[Klein automorphisms, \emph{cf.}~\cite{RealEnriques}]\label{def:Klein}
Let $X$ be a complex manifold and  $G$ be a group.
\begin{itemize}
\item A \emph{Klein automorphism} of $X$ is a holomorphic or anti-holomorphic diffeomorphism from $X$ to itself.
We denote by $\KAut(X)$ the group of Klein automorphisms of $X$. The (biholomorphic) automorphisms of $X$ naturally form a normal subgroup $\Aut(X)$, which is of index at most two in $\KAut(X)$.
\item A \emph{Klein action} of $G$ on $X$ is a group homomorphism $\rho: G \longrightarrow \KAut X$. We say that $\rho$ is \emph{faithful} if it is injective.
\item Two Klein actions $\rho_{1}, \rho_{2}$ of $G$ on $X$ are said to be \emph{conjugate}, if there exists a Klein automorphism $f\in \KAut(X)$ such that $\rho_{1}(g)=f\circ \rho_{2}(g)\circ f^{-1}$ for all $g\in G$.
\end{itemize}
\end{definition}

To understand $\KAut(X)/\Aut(X)$, let us recall the following standard operation\,:

\begin{definition}[Conjugate manifold]\label{def:Conj}
Given a complex manifold $X=(M, I)$, with $M$ being the underlying differentiable manifold and $I$ being the complex structure, the \emph{conjugate} of $X$ is the complex manifold $\bar{X}:=(M, -I)$.
We denote by
\[\conjug\colon \bar{X}\to X\]
the `identity' map, which is an anti-holomorphic diffeomorphism.

If moreover $X$ is the analytic space associated to an algebraic scheme defined over $\IC$, then the conjugate of $X$ is the analytic space associated to the conjugate algebraic scheme $\bar{X}$, which is the base-change of $X$ induced by the complex conjugate of the base field $\IC$\,:
\begin{equation*}
\xymatrix{
 \bar{X} \ar[rr]^{\conjug} \ar[d]  \ar@{}[drr]|\square&& X\ar[d]\\
 \Spec \IC \ar[rr]^{\Spec \conjug}&& \Spec \IC
}
\end{equation*}
where the vertical arrows are structure morphisms.
\end{definition}

\begin{lemma}\label{lemma:Index2}
Let $X$ be a complex manifold. The group $\Aut X$ is a normal subgroup of $\KAut X$, of index at most $2$. Hence we have the exact sequence
\begin{equation}\label{eqn:LES}
 1\longrightarrow \Aut(X)\longrightarrow \KAut(X)\xrightarrow{\;\epsilon\;} \set{\pm 1}.
\end{equation}
The index is $2$ (\emph{i.e.}~$\epsilon$ is surjective) if and only if $X$ is isomorphic to its conjugate $\bar{X}$ as complex manifolds.
\end{lemma}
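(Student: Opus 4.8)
The plan is to prove Lemma~\ref{lemma:Index2} by directly analyzing the map $\epsilon\colon \KAut(X)\to\set{\pm 1}$ that records whether a Klein automorphism is holomorphic ($+1$) or anti-holomorphic ($-1$). First I would verify that $\epsilon$ is a well-defined group homomorphism. The only subtle point is that every Klein automorphism is, by Definition~\ref{def:Klein}, \emph{either} holomorphic \emph{or} anti-holomorphic, and these two possibilities are mutually exclusive for a diffeomorphism of positive-dimensional complex manifolds (a map cannot simultaneously commute and anti-commute with the complex structure $I$ unless $I=0$). This dichotomy makes $\epsilon$ well-defined as a function. To see it is a homomorphism, I would observe that the composition of two holomorphic or two anti-holomorphic maps is holomorphic, while the composition of one of each type is anti-holomorphic; this is exactly the multiplication rule in $\set{\pm 1}$, so $\epsilon(fg)=\epsilon(f)\epsilon(g)$.

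Next I would identify $\Aut(X)=\Ker(\epsilon)$, which is immediate from the definition of $\epsilon$ since the holomorphic diffeomorphisms are precisely those sent to $+1$. As the kernel of a homomorphism, $\Aut(X)$ is automatically a normal subgroup, and since the target $\set{\pm 1}$ has order $2$, the index $[\KAut(X):\Aut(X)]$ divides $2$, hence is at most $2$. This establishes the exact sequence~\eqref{eqn:LES} and the normality claim at once, reducing the work to the final equivalence about when $\epsilon$ is surjective.

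The heart of the argument is the surjectivity criterion, and this is where I would spend most of the effort. Surjectivity of $\epsilon$ means exactly that there exists an anti-holomorphic diffeomorphism $\phi\colon X\to X$. I would prove both implications using the canonical anti-holomorphic map $\conjug\colon\bar X\to X$ from Definition~\ref{def:Conj}. For the forward direction, given an anti-holomorphic diffeomorphism $\phi\colon X\to X$, the composite $\conjug^{-1}\circ\phi\colon X\to\bar X$ is a composition of two anti-holomorphic maps, hence holomorphic, and being a diffeomorphism it is a biholomorphism, so $X\cong\bar X$. Conversely, given a biholomorphism $\psi\colon X\to\bar X$, the composite $\conjug\circ\psi\colon X\to X$ is holomorphic followed by anti-holomorphic, hence an anti-holomorphic diffeomorphism of $X$, witnessing $\epsilon(\conjug\circ\psi)=-1$.

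The main obstacle, though it is more a matter of care than of difficulty, is pinning down the interaction between holomorphicity and the conjugation operation cleanly: one must be sure that $\conjug$ is genuinely anti-holomorphic (which is built into Definition~\ref{def:Conj}) and that the holomorphic/anti-holomorphic type of a composite behaves multiplicatively, so that the two constructions above indeed land in the claimed classes. Once the bookkeeping of $\overline\IC$-linearity under composition is in hand, both implications follow formally and the lemma is complete.
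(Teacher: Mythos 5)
Your proposal is correct and follows essentially the same route as the paper's proof: the paper likewise deduces normality and index at most $2$ from the composition rule (two anti-holomorphic maps compose to a holomorphic one) and proves the surjectivity criterion by composing with the canonical map $\conjug\colon \bar{X}\to X$ of Definition~\ref{def:Conj}. Your additional care about the well-definedness of $\epsilon$ (the holomorphic/anti-holomorphic dichotomy for diffeomorphisms of positive-dimensional manifolds) is a detail the paper leaves implicit, but it does not change the argument.
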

\begin{proof}
The first assertion is clear from the fact that the composition of two anti-holomorphic automorphisms  is holomorphic. As for the second one, the index being $2$ amounts to the existence of anti-holomorphic automorphisms, which is equivalent to the existence of isomorphisms between $X$ and $\bar{X}$, by composition with the map $\conjug: \bar{X}\to X$ in Definition~\ref{def:Conj}.
\end{proof}

As a special case of Klein automorphisms, we have the following classical notion in real algebraic geometry\,:
\begin{definition}[Real structures]\label{def:RealStr}
Let $X$ be a complex manifold. 
\begin{itemize}
\item  A \emph{real structure} is an anti-holomorphic diffeomorphism $\sigma: X\to X$ of order $2$ (\emph{i.e.}~an involution). 
\item Two real structures $\sigma_{1}$ and $\sigma_{2}$ are said to be \emph{equivalent}, if there exists a holomorphic automorphism $f\in \Aut(X)$ such that $\sigma_{1}\circ f=f\circ \sigma_{2}$.
\end{itemize}
\end{definition}
Some examples of real structures are provided in the hyperk\"ahler setting in \S \ref{subsect:Examples}.
\begin{rem}
\label{rm:Equiv=Conj}
Obviously, two equivalent real structures are conjugate as Klein automorphisms in the sense of Definition~\ref{def:Klein}. It is worth mentioning that the converse is also true. Indeed, if $\sigma$ and $\sigma'$ are two real structures such that there exists $f\in \KAut(X)$ satisfying $\sigma=f\circ \sigma'\circ f^{-1}$, then $\sigma$ and $\sigma'$ are conjugate to each other by a holomorphic automorphism (hence equivalent), namely, $f$ itself if $f$ is holomorphic and $\sigma\circ f$ if $f$ is anti-holomorphic. 
\end{rem}

\begin{rem}\label{rem:invariantSubmanifold}
If $X$ is a complex manifold endowed with a real structure $\sigma$, and $Y \subseteq X$ is a complex subvariety such that $\sigma(Y) = Y$, then $\sigma|_Y$ defines a real structure on $Y$.
\end{rem}

As is discussed in the Introduction, the central problems that we want to address in this paper are the existence and  finiteness of real structures up to equivalence, and the finiteness of faithful finite group Klein actions up to conjugacy. See  \S \ref{sect:Intro} for the known cases and counter-examples, as well as the statement of our main results Theorem~\ref{main1} and Theorem~\ref{main2}.

\section{Pull-backs}\label{sect: pull-backs}
To study the group of Klein automorphisms, we have to look at its various natural representations, among which the most important one for us is its action upon the N\'eron--Severi group/space as well as the ample cone inside it. To this end, we treat with some details in this section the notion of \emph{pull-back} of holomorphic vector bundles and Cartier divisors by anti-holomorphic automorphisms so that we have a well-defined action by the whole group of Klein automorphisms.

Throughout this section, $X$ is a compact complex manifold and $f\in \KAut(X)$ is an \emph{anti-holomorphic} automorphism, unless otherwise specified. To avoid confusion, the notation $f^{*}$ is reserved for the usual (differentiable) pull-back.

\subsection{Functions and divisors}
We start by the pull-back of functions. 
Given any open subset $U$ in $X$ and any holomorphic function $g \in \cO_X(U)$ on it, we define the holomorphic function
\[f^\starbar g := \overline{g \circ f}\]
on the open subset $f^{-1}(U)$.

It obviously enjoys the following two properties\,: for any $g_1, g_2 \in \cO_X(U)$ we have
\[f^\starbar (g_1 + g_2) = f^\starbar g_1 + f^\starbar g_2, \qquad f^\starbar (g_1 \cdot g_2) = f^\starbar g_1 \cdot f^\starbar g_2.\]
In other words, 
\[f^\starbar: \cO_X\longrightarrow f_* \cO_X\]
is an anti-linear isomorphism of sheaves of $\IC$-algebras. This definition of $f^{\starbar}$ clearly extends to the sheaf of meromorphic functions without any change.

Next, let us define the pull-back of Cartier divisors.  Let $D = \set{(U_i, g_i)}$ be a Cartier divisor, where $\set{U_{i}}$ is an open cover of $X$ and $g_i$ is a non-zero meromorphic function on $U_i$ such that $g_i / g_j \in \mathcal{O}^{*}(U_i \cap U_j)$ for all $i, j$. Following \cite[Definition~1.1]{Benzerga}, the \emph{holomorphic pull-back} by $f$ of $D$ is the Cartier divisor
\[f^\hstar D = \set{(f^{-1}(U_i), f^\starbar g_i)}.\]
Since $f^\hstar$ is a homomorphism of the group of Cartier divisors on $X$ preserving the subgroup of principal Cartier divisors, it descends to give an isomorphism
\[\begin{array}{rccc}
f^\hstar: & \Pic X & \longrightarrow & \Pic X\\
 & \cL = \cO_X(D) & \longmapsto & f^\hstar \cL \coloneqq \cO_X(f^\hstar D).
\end{array}\]

\subsection{Vector bundles}
An equivalent way to define the holomorphic pull-back via $f$ of a line bundle is to use directly the cocycle that defines it. This approach generalizes to vector bundles. Let $\cV$ be a holomorphic vector bundle on $X$. As $f$ is anti-holomorphic, the differentiable pull-back $f^{*}\cV$ is an anti-holomorphic complex vector bundle. Its \emph{holomorphic pull-back} by $f$, denoted by $f^\hstar \cV$, is by definition the conjugate bundle of $f^{*}\cV$\,:
\[f^{\hstar} \cV :=\overline{f^{*}\cV}.\]
In other words, let $\cV$ be defined on a trivializing open cover $\set{U_\alpha}$ by the cocycle $g_{\alpha \beta}: U_\alpha \cap U_\beta \longrightarrow \GL(r, \IC)$. Then $f^\hstar \cV$ is the holomorphic vector bundle defined on the trivializing open cover $\set{f^{-1}(U_\alpha)}$ by the cocycle $f^\starbar g_{\alpha \beta}=\overline{g_{\alpha \beta}\circ f}$. One checks easily that this construction is independent of the choice of cocycle, \emph{i.e.}~holomorphic pull-back preserves isomorphisms. We have the compatibility that for any $g\in \Aut(X)$,
\[(f \circ g \circ f^{-1})^* (\cV) \cong  (f^\hstar)^{-1}\circ g^* \circ f^\hstar  (\cV).\]
It is well-known that the Chern classes of a complex vector bundle $E$ and those of its conjugate bundle $\overline{E}$ are related by $c_{i}(\overline{E})=(-1)^{i}c_{i}(E)$. This yields that in $H^{2i}(X, \IZ)$,
\[c_{i}\left(f^{\hstar}\cV\right)=c_{i}\left(\overline{f^{*}\cV}\right)=(-1)^{i}f^{*}c_{i}(\cV),\]
where $f^{*}: H^{2i}(X, \IZ)\to H^{2i}(X, \IZ)$ is the map induced by viewing $f$ as the underlying diffeomorphism.
Moreover, there is a natural map for sections
\begin{equation}\label{eq:compatibility of c1}
\begin{array}{rccl}
f^\hstar: & H^0(X, \cV) & \xrightarrow{\;\simeq\;} & H^0(X, f^\hstar \cV)\\
 & s=(s_{\alpha})_{\alpha} & \longmapsto & (\overline{s_{\alpha} \circ f})_{\alpha},
\end{array}
\end{equation}
which is an anti-linear isomorphism. 


\begin{rem}[Variants]\label{rem:Variants}
Note that the above operation of holomorphic pull-backs by anti-holomorphic automorphisms extends naturally to all coherent sheaves. More precisely, given a coherent sheaf $E$ on a complex manifold $X$ with an anti-holomorphic automorphism $f$, one can write $E$ as the cokernel of a morphism between two locally free sheaves $F_{1}\to F_{0}$, then $f^{\hstar}(E)$ is defined to be the cokernel of $f^{\hstar}(F_{0})\to f^{\hstar}(F_{1})$. Even more generally, by taking locally free resolutions, one obtains an auto-equivalence of the bounded derived category $f^{\hstar}: D^{b}(X)\to D^{b}(X)$ which is exact with respect to the standard $t$-structure. 

\end{rem}

Going back to the case of line bundles, the map \eqref{eq:compatibility of c1} on sections allows us to study the rational map associated to the linear system of the holomorphic pull-back of a line bundle\,:

\begin{lemma}[Base loci]
Let $\cL$ be a holomorphic line bundle on a compact complex manifold and $f$ an anti-holomorphic automorphism. Then
\[\Bs|\cL| = f(\Bs|f^\hstar \cL|).\]
\end{lemma}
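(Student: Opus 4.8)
The plan is to describe both base loci as intersections of the zero loci of global sections and to transport the description across the anti-linear isomorphism
\[f^\hstar: H^0(X, \cL) \xrightarrow{\;\simeq\;} H^0(X, f^\hstar \cL)\]
from \eqref{eq:compatibility of c1}. Recall that for a line bundle $\cL$ one has $\Bs|\cL| = \bigcap_{s} Z(s)$, the intersection being taken over all $s \in H^0(X,\cL)$, where $Z(s)$ denotes the common zero locus of the local representatives $s_\alpha$ of $s$ (well-defined because the transition functions $g_{\alpha\beta}$ are nowhere vanishing). Since $f^\hstar$ is a bijection on global sections, as the section $s$ ranges over $H^0(X,\cL)$ its image $t = f^\hstar s$ ranges over all of $H^0(X, f^\hstar\cL)$, so it suffices to compare $Z(s)$ with $Z(f^\hstar s)$ one section at a time.

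The key local computation is the following. By construction $f^\hstar \cL$ is trivialized on the open cover $\set{f^{-1}(U_\alpha)}$, and $f^\hstar s$ has local representatives $\overline{s_\alpha \circ f}$. Hence for a point $y \in f^{-1}(U_\alpha)$ we have
\[(f^\hstar s)(y) = 0 \iff \overline{s_\alpha(f(y))} = 0 \iff s_\alpha(f(y)) = 0 \iff f(y) \in Z(s),\]
where the middle equivalence uses only that complex conjugation preserves vanishing. This says precisely that $Z(f^\hstar s) = f^{-1}(Z(s))$; the anti-linearity of $f^\hstar$ is irrelevant to the zero set, which is the one point that deserves a moment's care.

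Intersecting over all sections and using the bijectivity of $f^\hstar$ then yields
\[\Bs|f^\hstar \cL| = \bigcap_{s} Z(f^\hstar s) = \bigcap_{s} f^{-1}\bigl(Z(s)\bigr) = f^{-1}\Bigl(\bigcap_{s} Z(s)\Bigr) = f^{-1}\bigl(\Bs|\cL|\bigr),\]
and applying the bijection $f$ to both sides gives $f(\Bs|f^\hstar \cL|) = \Bs|\cL|$, as claimed. I do not expect a genuine obstacle here: the statement is essentially a bookkeeping verification, and the only substantive inputs are already in hand, namely that $f^\hstar$ is a bijection on sections \eqref{eq:compatibility of c1} and that conjugating a local function does not change where it vanishes.
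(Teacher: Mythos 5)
Your proof is correct and is essentially the paper's argument: the paper's one-line proof rests on exactly your key equivalence $s(x)=0 \iff (f^\hstar s)(f^{-1}(x))=0$, which you have merely spelled out via local trivializations and the description of $\Bs|\cL|$ as $\bigcap_s Z(s)$. The extra bookkeeping (bijectivity of $f^\hstar$ on sections and the intersection formula) is fine and adds no new idea beyond what the paper uses implicitly.
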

\begin{proof}
It follows from the simple observation that 
\[s(x) = 0 \Longleftrightarrow  \overline{s(f(f^{-1}(x)))} = 0 \Longleftrightarrow (f^\hstar s)(f^{-1}(x)) = 0\]
for any holomorphic section $s$ of $\cL$ and any point $x$ of the manifold.
\end{proof}

\begin{prop}
Let $\cL$ be a line bundle on a compact complex manifold $X$ and $f$ an anti-holomorphic automorphism. Then we have a commutative square
\[\xymatrix{\overline{X} \ar@{-->}[rr]^-{\conjug\circ\varphi_{|\cL|}\circ \conjug^{-1}} & & \IP\left(\overline{H^0(X, \cL)}\right)\\
X \ar@{-->}[rr]^-{\varphi_{|f^\hstar \cL|}} \ar[u]^{\conjug\circ f}_{\simeq} & & \IP(H^0(X, f^\hstar \cL)), \ar[u]^{\simeq}_{\IP\left(\conjug\circ f^{\hstar}\right)}}\]
where $\varphi$ denotes the rational map associated to a linear system and $\IP$ is the projective space of $1$-dimensional quotients \`a la Grothendieck.
\end{prop}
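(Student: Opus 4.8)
The plan is to verify the commutativity of the square pointwise on a dense open subset and then invoke the separatedness of the target projective space. First I would restrict attention to the locus $X \setminus \Bs|f^\hstar\cL|$; assuming the linear system is nonempty, its base locus is a proper analytic subset, so this is a dense open subset of $X$. By the preceding Base loci lemma we have $\Bs|\cL| = f(\Bs|f^\hstar\cL|)$, so the image of this locus under $\conjug\circ f$ avoids $\Bs|\cL|$, and all four maps in the square are genuinely regular there. Since $\IP(\overline{H^0(X,\cL)})$ is separated, it then suffices to check that the two compositions $X \to \IP(\overline{H^0(X,\cL)})$ agree at every point $x$ of this open set.

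For the pointwise comparison I would work in coordinates. Choose a $\IC$-basis $s_0,\dots,s_N$ of $H^0(X,\cL)$; then, by the anti-linear isomorphism \eqref{eq:compatibility of c1}, the sections $f^\hstar s_i = \overline{s_i\circ f}$ form a basis of $H^0(X, f^\hstar\cL)$, while the images $\conjug(s_i)$ form a basis of $\overline{H^0(X,\cL)}$. In these bases the lower horizontal map reads
\[
\varphi_{|f^\hstar\cL|}(x) = \big[\,(f^\hstar s_0)(x) : \cdots : (f^\hstar s_N)(x)\,\big] = \big[\,\overline{s_0(f(x))} : \cdots : \overline{s_N(f(x))}\,\big],
\]
and the right vertical identification $\IP(\conjug\circ f^\hstar)$ simply transports these homogeneous coordinates into the basis $\{\conjug(s_i)\}$ of $\overline{H^0(X,\cL)}$. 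On the other hand, the left vertical map $\conjug\circ f$ sends $x$ to the point of $\overline{X}$ whose underlying point is $f(x)$, and then the top map $\conjug\circ\varphi_{|\cL|}\circ\conjug^{-1}$ evaluates the original linear system at $f(x)$ and conjugates, producing the point $\big[\,\overline{s_0(f(x))} : \cdots : \overline{s_N(f(x))}\,\big]$ in the very same basis. The two expressions coincide, which is exactly the asserted commutativity.

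The computation is essentially bookkeeping once the section-pullback formula is in hand, so I do not expect any serious analytic obstacle. The one point demanding care — and where the statement really carries content — is the consistent matching of the three anti-linear identifications: the conjugation $\conjug$ on $X$, the conjugation on the vector space $\overline{H^0(X,\cL)}$, and the anti-linear map $f^\hstar$ on sections. Concretely, I would make sure that $\IP$, taken \`a la Grothendieck as the space of one-dimensional quotients, transports the evaluation functional $\mathrm{ev}_x$ on $H^0(X, f^\hstar\cL)$ to the complex conjugate of the evaluation functional $\mathrm{ev}_{f(x)}$ on $H^0(X,\cL)$. This is precisely the content of \eqref{eq:compatibility of c1} together with the definition $f^\hstar\cL = \overline{f^*\cL}$, and it is what forces the \emph{conjugate} projective space $\IP(\overline{H^0(X,\cL)})$ — rather than $\IP(H^0(X,\cL))$ — to appear as the target.
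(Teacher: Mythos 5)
Your proposal is correct and takes essentially the same route as the paper: both arguments come down to the single pointwise identity $(f^\hstar s)(x) = \overline{s(f(x))}$ coming from the section map \eqref{eq:compatibility of c1}, the paper phrasing it coordinate-freely by comparing the evaluation functionals $ev_{f(x)}$ and $ev_x \circ \overline{f^\hstar}$ in the Grothendieck picture, while you carry out the same computation in homogeneous coordinates with respect to a basis $s_0,\dots,s_N$ and its transported bases. Your preliminary reduction to the complement of $\Bs|f^\hstar\cL|$ via the Base loci lemma, together with the separatedness of the target, makes explicit a point the paper leaves tacit when it checks the equality pointwise, but this is a matter of care rather than a different method.
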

\begin{proof}
Recall that, by definition, $\varphi_{|\cL|}$ sends a point $x$ to (the class of) the functional of evaluation of sections in $x$, say $ev_x$. Then on one hand we have $ev_{f(x)}$, and on the other hand we find $ev_x \circ \overline{f^\hstar}$. Now, for any section $s$ of $\cL$, it holds that
\[ev_{f(x)}(s) = s(f(x)), \qquad (ev_x \circ \overline{f^\hstar})(s) = (\overline{f^\hstar s})(x) = s(f(x)),\]
which implies the commutativity of the diagram.
\end{proof}

\begin{cor}[Positivity]\label{cor: f-hstar preserves ampleness}
Let $\cL$ be a holomorphic line bundle on a compact complex manifold $X$ and $f$ an anti-holomorphic automorphism of $X$. Then\,:
\begin{enumerate}
\item $\cL$ is base-point free if and only if $f^\hstar \cL$ is so;
\item $\cL$ is (very) ample if and only if $f^\hstar \cL$ is so.
\end{enumerate}
\end{cor}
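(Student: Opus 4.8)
The plan is to deduce both statements almost directly from the Base loci lemma and the preceding Proposition on associated rational maps, using in addition that $f^\hstar$ is multiplicative for tensor products. The governing idea is that holomorphic pull-back by an anti-holomorphic automorphism behaves, on all the relevant positivity data, exactly like an ordinary pull-back: it preserves base loci up to applying $f$, and it transforms the projective embedding morphism by isomorphisms of source and target.

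For statement (1), I would observe that a line bundle is base-point free precisely when its base locus is empty. The Base loci lemma gives $\Bs|\cL| = f(\Bs|f^\hstar\cL|)$, and since $f$ is a diffeomorphism, hence a bijection, the set $f(\Bs|f^\hstar\cL|)$ is empty if and only if $\Bs|f^\hstar\cL|$ is empty. Thus $\cL$ is base-point free if and only if $f^\hstar\cL$ is so, which settles (1).

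For the very ample part of (2), I would invoke the commutative square of the preceding Proposition. Its two vertical arrows are isomorphisms: $\conjug\circ f\colon X\to\bar{X}$ is a biholomorphism, being the composition of two anti-holomorphic maps, and $\IP(\conjug\circ f^\hstar)$ is an isomorphism of projective spaces, induced by the anti-linear isomorphism $f^\hstar$ on global sections from \eqref{eq:compatibility of c1}. Consequently $\varphi_{|f^\hstar\cL|}$ is obtained from $\varphi_{|\cL|}$ by pre-composition with the isomorphism $\conjug\circ f$ of the source and post-composition with an isomorphism of the target projective space. Since the property of being a closed embedding is stable under such pre- and post-composition with isomorphisms, $\varphi_{|\cL|}$ is a closed embedding if and only if $\varphi_{|f^\hstar\cL|}$ is; equivalently, $\cL$ is very ample if and only if $f^\hstar\cL$ is very ample. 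For the ample part, I would reduce to very ampleness via powers: because $f^\hstar$ is a group homomorphism on $\Pic X$ (it is defined by conjugating cocycles, and $f^\starbar$ is multiplicative), we have $f^\hstar(\cL^{\otimes n})\cong(f^\hstar\cL)^{\otimes n}$ for every $n$. Using that $\cL$ is ample exactly when $\cL^{\otimes n}$ is very ample for some $n>0$, the very ample case then yields that $\cL$ is ample if and only if $f^\hstar\cL$ is ample.

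I do not anticipate a serious obstacle, since the substantive content has been front-loaded into the Base loci lemma and the Proposition. The only points demanding care are the bookkeeping of anti-linearity -- namely confirming that both vertical arrows in the diagram are genuine isomorphisms, so that the embedding property transfers correctly -- and the compatibility $f^\hstar(\cL^{\otimes n})\cong(f^\hstar\cL)^{\otimes n}$, which rests on the multiplicativity of $f^\starbar$ already recorded in \S\ref{sect: pull-backs}.
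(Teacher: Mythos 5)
Your proposal is correct and follows exactly the route the paper intends: the paper states this corollary without proof, as an immediate consequence of the Base loci lemma (for (1), via bijectivity of $f$) and the commutative square of the preceding proposition (for very ampleness, transferring the closed-embedding property along the two vertical isomorphisms), with ampleness reduced to very ampleness of powers using that $f^\hstar$ is a homomorphism on $\Pic X$. Your write-up simply makes these implicit steps explicit, including the needed compatibility $f^\hstar(\cL^{\otimes n})\cong(f^\hstar\cL)^{\otimes n}$.
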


\begin{rem}\label{rem:invarient line bundle}
Observe that if $\sigma$ is a real structure on a projective manifold $X$, then for any ample line bundle $\cL$ on $X$, $\cL':=\cL \otimes \sigma^\hstar \cL$ is also ample. Moreover, $\cL'$ is preserved by $\sigma^{\hstar}$, hence it comes from an ample line bundle $\cL'_{0}$ defined on $X_{0}$, the real model of $X$ determined by the real structure $\sigma$. Therefore a sufficiently high power of $\cL'$ induces an embedding of $X$ into a projective space in such a way that $\sigma$ is realized as the restriction to (the image of) $X$ of the real structure of the ambient projective space given by the coordinate-wise complex conjugation.

Similarly, on a compact K\"ahler manifold $X$ together with a real structure $\sigma$, if $\omega\in H^{1,1}(X, \IR)$ is a K\"ahler class, then $-\sigma^{*}(\omega)$ is also a K\"ahler class. It is therefore easy to find a $\sigma$-anti-invariant K\"ahler class, for instance $\omega-\sigma^{*}\omega$.
\end{rem}

\begin{prop}\label{prop:InducedRealStructure}
Let $X$ be a complex manifold with a real structure $\sigma$, and let $\cV$ be a holomorphic vector bundle on $X$. Assume that there exists an isomorphism $\varphi: \sigma^\hstar \cV \longrightarrow \cV$, and consider the composition
\[\Phi: H^0(X, \cV) \xrightarrow{\sigma^\hstar} H^0(X, \sigma^\hstar \cV) \xrightarrow{\varphi_*} H^0(X, \cV).\]
If $s \in H^0(X, \cV)$ is such that $\Phi(s) = s$, then the zero locus $V(s)$ of $s$ is invariant under $\sigma$. In particular, if $V(s)$ is smooth then $\sigma|_{V(s)}$ defines a real structure on it.
\end{prop}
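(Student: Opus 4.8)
The plan is to show that each of the two maps composing $\Phi$ transforms the zero locus in a controlled way, so that the zero locus of $\Phi(s)$ can be read off directly from $V(s)$ and $\sigma$. Once we establish the identity $V(\Phi(s)) = \sigma(V(s))$, the hypothesis $\Phi(s) = s$ immediately forces $V(s) = \sigma(V(s))$, and the final assertion follows from Remark~\ref{rem:invariantSubmanifold}.

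First I would analyze the effect of $\sigma^\hstar$ on zero loci. Writing $s = (s_\alpha)_\alpha$ in a trivializing cover, one has by definition $\sigma^\hstar s = (\overline{s_\alpha \circ \sigma})_\alpha$ as a section of $\sigma^\hstar \cV$; thus for every $x \in X$ the value $(\sigma^\hstar s)(x)$ vanishes if and only if $s(\sigma(x))$ does, exactly as in the pointwise computation of the Base loci lemma. Hence the zero locus of $\sigma^\hstar s$, viewed as a section of the bundle $\sigma^\hstar \cV = \overline{\sigma^{*}\cV}$ over $X$, equals $\sigma^{-1}(V(s)) = \sigma(V(s))$, where the last equality uses that $\sigma$ is an involution.

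Next I would observe that $\varphi_*$ preserves vanishing: since $\varphi\colon \sigma^\hstar \cV \to \cV$ is an isomorphism of holomorphic vector bundles, it restricts to a linear isomorphism $\varphi_x$ on each fiber, and $(\varphi_* t)(x) = \varphi_x(t(x))$ vanishes precisely when $t(x)$ does. Applying this to $t = \sigma^\hstar s$ gives $V(\Phi(s)) = V(\varphi_* \sigma^\hstar s) = V(\sigma^\hstar s) = \sigma(V(s))$. Therefore the assumption $\Phi(s) = s$ yields $V(s) = \sigma(V(s))$, i.e.\ $V(s)$ is $\sigma$-invariant; and when $V(s)$ is smooth, Remark~\ref{rem:invariantSubmanifold} shows that $\sigma|_{V(s)}$ is a real structure on it.

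The argument is essentially a bookkeeping of vanishing loci under anti-holomorphic pull-back, so I do not expect a serious obstacle; the one point requiring care is the interplay between the complex conjugation built into $\sigma^\hstar$ and the set-theoretic zero locus. This, however, is harmless: vanishing of a value is unaffected by conjugating that value, so the conjugation plays no role at the level of zero loci. It is worth noting that $\Phi$ is only anti-linear (being the composite of the anti-linear $\sigma^\hstar$ with the linear $\varphi_*$), so that ``$\Phi(s) = s$'' is a genuine reality condition rather than a linear eigenvector equation; this does not affect the zero-locus computation, but it is precisely this reality condition that produces sections cutting out $\sigma$-invariant subvarieties.
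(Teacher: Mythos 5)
Your proposal is correct and follows essentially the same route as the paper: both arguments come down to the pointwise observation in a trivializing cover that $\Phi(s)$ at a point is $\varphi$ applied to the conjugate of $s$ at the $\sigma$-image of that point, so that vanishing transports along $\sigma$. You merely package this as the intermediate identity $V(\Phi(s)) = \sigma(V(s))$ before invoking $\Phi(s)=s$, whereas the paper verifies $s(\sigma(x))=0$ for $x\in V(s)$ directly by the same chain of equalities; the content is identical.
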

\begin{proof}
We just need to prove that if $s(x) = 0$, then $s(\sigma(x)) = 0$. Let $\{ U_\alpha \}$ be a trivializing open covering for $\cV$, over which $s = (s_\alpha)_\alpha$. Given $x \in V(s)$, we have $\sigma(x) \in U_\alpha$ for some $\alpha$, and so
\[s_\alpha(\sigma(x)) = \Phi(s_\alpha)(\sigma(x)) = \varphi_* \pa{\overline{s_\alpha (\sigma(\sigma(x)))}} = \varphi_* \pa{\overline{s_\alpha (x)}} = 0.\]
\end{proof}

\begin{rem}\label{rem:PhiRealStructure}
Note that the map $\Phi$ defined in Proposition \ref{prop:InducedRealStructure} is not necessarily a real structure on $H^0(X, \cV)$. It is one if $\sigma$ and $\varphi$ satisfy certain compatibility in the sense that the following composition
\[\cV_x \xrightarrow{\conjug} \overline{\cV}_x = (\sigma^\hstar \cV)_{\sigma(x)} \xrightarrow{\varphi_{\sigma(x)}} \cV_{\sigma(x)} \xrightarrow{\conjug} \overline{\cV}_{\sigma(x)} = (\sigma^\hstar \cV)_x \xrightarrow{\varphi_x} \cV_x\]
is the identity for every $x \in X$.
\end{rem}

\subsection{Action on the ample cone\,: the dagger operation}\label{subsect:Dagger}
The N\'eron--Severi group of $X$, denoted by $\NS(X)$, is by definition the image of the first-Chern-class map
\[c_{1}\colon \Pic(X)\to H^{2}(X, \IZ).\]
Now for any $f\in \KAut(X)$, holomorphic or anti-holomorphic, we define the \emph{holomorphic pull-back} $f^{\dagger}\,: H^{2}(X, R) \to H^{2}(X,R)$ with $R=\IZ, \IQ, \IR$ or $\IC$ as
\[f^\dagger \coloneqq \epsilon(f)f^{*}=\left\{ \begin{array}{ll}
f^* & \text{if } f \in \Aut X,\\
-f^* & \text{if } f \notin \Aut X.
\end{array} \right.\]
where $f^{*}$ is the usual pull-back by regarding $f$ as a diffeomorphism and $\epsilon$ is the \emph{signature map} in (\ref{eqn:LES}). Obviously, we have $(f \circ g)^\dagger = g^\dagger \circ f^\dagger$ and $(f^{-1})^\dagger = (f^\dagger)^{-1}$ for every $f, g \in \KAut X$\,; hence the second cohomology of $X$ has a \emph{right} action of the group $\KAut(X)$.

Recall that for a projective complex manifold $X$, its \emph{ample cone} $\cA(X)$ is the (open) convex cone of all ample $\IR$-divisor classes, which sits inside the N\'eron--Severi space\,:
\[\cA(X)\subseteq \NS(X)_{\IR}\subseteq H^{2}(X, \IR).\]

\begin{lemma}\label{lemma:dagger}
The right action by $\dagger$ of $\KAut(X)$ upon $H^{2}(X,\IR)$ preserves the N\'eron--Severi space $\NS(X)_{\IR}$ and the ample cone $\cA(X)$.
\end{lemma}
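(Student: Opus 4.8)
The plan is to reduce the whole statement to the already-established relation between the dagger operation on cohomology and the holomorphic pull-back $f^\hstar$ on $\Pic(X)$. For $f \in \Aut(X)$ holomorphic we have $f^\dagger = f^*$, and that $f^*$ preserves $\NS(X)_\IR$ and $\cA(X)$ is classical, so the only genuinely new case is that of an anti-holomorphic $f$, for which $f^\dagger = -f^*$. The key compatibility is obtained by specializing the Chern class formula $c_i(f^\hstar \cV) = (-1)^i f^* c_i(\cV)$ to $i = 1$ and to a line bundle $\cL$, which gives
\[ f^\dagger\, c_1(\cL) = -f^* c_1(\cL) = c_1(f^\hstar \cL). \]
Thus on Néron--Severi classes the $\IR$-linear map $f^\dagger$ coincides with the map induced by the automorphism $f^\hstar$ of $\Pic(X)$ studied in \S\ref{subsect:Dagger}.

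For the preservation of $\NS(X)_\IR$, I would argue as follows. Since $\NS(X)$ is by definition the image of $c_1$ and $f^\hstar \cL \in \Pic(X)$, the displayed identity shows that $f^\dagger$ sends every generator $c_1(\cL)$ into $\NS(X)$. As $\NS(X)_\IR$ is the $\IR$-linear span of these classes and $f^\dagger$ is $\IR$-linear, we obtain $f^\dagger(\NS(X)_\IR) \subseteq \NS(X)_\IR$; applying the same to $f^{-1}$, whose dagger is $(f^\dagger)^{-1}$, yields equality.

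For the preservation of the ample cone, recall that $\cA(X)$ is the convex cone of ample $\IR$-divisor classes, that is, the cone generated by the classes $c_1(\cL)$ with $\cL$ an ample integral line bundle. By Corollary \ref{cor: f-hstar preserves ampleness}(2), $\cL$ is ample if and only if $f^\hstar \cL$ is ample, so the identity above shows that $f^\dagger$ maps each ample generator $c_1(\cL)$ to the ample class $c_1(f^\hstar \cL)$. Linearity then gives $f^\dagger(\cA(X)) \subseteq \cA(X)$, and replacing $f$ by $f^{-1}$ gives the reverse inclusion, hence $f^\dagger(\cA(X)) = \cA(X)$.

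The argument is essentially formal once the sign identity $f^\dagger c_1(\cL) = c_1(f^\hstar \cL)$ is in hand, so I do not expect a serious obstacle. The only point deserving care is the bookkeeping of the signature $\epsilon(f)$ in the definition of $f^\dagger$\,: it is precisely this sign that cancels the $(-1)$ coming from conjugation of Chern classes and thereby ensures that $f^\dagger$ lands in the ample, rather than the anti-ample, cone.
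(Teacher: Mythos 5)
Your proof is correct and follows essentially the same route as the paper: both rest on the identity $f^\dagger c_1(\cL) = \epsilon(f) f^* c_1(\cL) = c_1(f^\hstar \cL)$ combined with Corollary \ref{cor: f-hstar preserves ampleness} and the fact that $\NS(X)_\IR$ (resp.\ $\cA(X)$) is generated as an $\IR$-vector space (resp.\ as a cone) by first Chern classes of line bundles (resp.\ ample line bundles). Your only additions are cosmetic — the explicit case split between holomorphic and anti-holomorphic $f$, and the passage from inclusion to equality via $(f^{-1})^\dagger = (f^\dagger)^{-1}$ — both of which the paper leaves implicit.
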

\begin{proof}
For any $f\in \KAut(X)$, we have
$$f^{\dagger}c_{1}(\cL)=\epsilon(f)f^{*}c_{1}(\cL)=c_{1}(f^{\hstar}\cL),$$
where $f^{\hstar}(\cL)$ is a line bundle, and it is ample if $\cL$ is ample by Corollary~\ref{cor: f-hstar preserves ampleness}. We can conclude since $\NS(X)_\IR$ (\emph{resp.}~$\cA(X)$) is generated as $\IR$-vector space (\emph{resp.}~cone) by the first Chern classes of line bundles (\emph{resp.}~ample line bundles), and so it suffices to check for elements of the form $c_{1}(\cL)$ with $\cL$ being a line bundle (\emph{resp.}~an ample line bundle).
\end{proof}

Switching to a left action by taking the inverse, we get a homomorphism
\begin{eqnarray*}
 \KAut(X)&\longrightarrow& \Aut\left(\NS(X)_{\IR}\right)\\
 f&\mapsto& (f^{-1})^{\dagger},
\end{eqnarray*}
which preserves the ample cone and extends the natural homomorphism $\Aut(X)\to \Aut(\NS(X)_{\IR})$ given by the usual pull-back.

\section{Non-abelian group cohomology}\label{sect: group cohomology}
\subsection{A reminder on group cohomology}
The main reference is \cite{BorelSerre}. Fix a finite group $G$. 
A $G$\emph{-group} is a group $A$ with a (left) $G$-action, that is, a homomorphism $G\to \Aut(A)$. A homomorphism between two $G$-groups is called $G$\emph{-equivariant} or a $G$\emph{-homomorphism} if it commutes with the $G$-action. We hence obtain the category of $G$-groups. 

Taking the $G$-invariant subgroup $A\mapsto A^{G}$ provides a natural functor from the category of $G$-groups to the category of groups. The theory of non-abelian group cohomology is about the functor $H^{1}(G, -)$ from the category of $G$-groups to the category of pointed sets.

Let us briefly recall the definition. For any $G$-group $A$, we write $\prescript{g}{}{a}$ for the result of the action of $g \in G$ on the element $a \in A$. Then
\begin{itemize}
\item The pointed set of $1$\emph{-cocycles} is
\[Z^{1}(G, A):=\left\{\phi: G\to A\mid  \phi(gh)=\phi(g) \cdot \prescript{g}{}{\phi(h)} \right\}\,;\]
with base point being the constant map to the identity of $A$.
\item Two $1$-cocycles $\phi$ and $\psi$ are \emph{equivalent}, denoted by $\phi\sim \psi$, if there exists $a\in A$, such that $a \psi(g)= \phi(g) \cdot \prescript{g}{}{a}$.
\item The first cohomology of $G$ with values in $A$, which is a pointed set, is defined as 
$$H^{1}(G, A):=Z^{1}(G, A)/\sim,$$
with the class of the trivial cocycle as the base point.
\end{itemize}

\begin{rem}[Abelian group cohomology]\label{rem: abelian group cohomology}
In general, $H^1(G, A)$ is only a pointed set instead of a group. However when $A$ is an abelian group (called a $G$-\emph{module}), we see that $Z^1(G, A)$ has a natural structure of abelian group, and the equivalence class of the trivial cocycle defines a subgroup $B^1(G, A)$, called the \emph{coboundaries}. Hence $H^1(G, A)$ can be defined as the quotient abelian group $Z^1(G, A) / B^1(G, A)$. Moreover, in this case, the group cohomology extends to higher degrees. 
\end{rem}

As usual, for a short exact sequence\footnote{This means that the $G$-homomorphism from $A'$ to $A$ is injective and identifies $A'$ with a normal subgroup of $A$ such that the quotient group is isomorphic to $A''$ via the $G$-homomorphism from $A$ to $A''$.} of $G$-groups 
\begin{eqnarray}\label{eqn:SES}
 1\to A'\to A\to A''\to 1,
\end{eqnarray}
there is an exact sequence of pointed sets\footnote{Recall that a sequence of morphisms of pointed sets is called \emph{exact}, if the image of a morphism is equal to the fiber of the next morphism over the base point.} \cite[Proposition 1.17]{BorelSerre}
\begin{equation}\label{eqn:LongES}
 1\to A'^{G}\to A^{G}\to A''^{G}\to H^{1}(G, A')\to H^{1}(G, A)\to H^{1}(G, A''),
\end{equation}

To study the fibers of maps in this exact sequence, we need the following notion which produces a new $G$-group out of an old one. 
\begin{definition}[Twisting, {\emph{cf.}~\cite[$\S$1.4]{BorelSerre}}]\label{def:Twist}
Let $A$ be a $G$-group and $A'$ a normal subgroup of $A$ stable by $G$. Let $A''$ be the quotient $G$-group. Then for any $1$-cocycle $\phi\in Z^{1}(G, A)$, define a new $G$-action on $A'$ by
\begin{eqnarray*}
G\times A'&\to& A'\\
(g, x) &\mapsto& \phi(g) \cdot \prescript{g}{}{x} \cdot \phi(g)^{-1}\,;
\end{eqnarray*}
and a new $G$-action on $A''$ by
\begin{eqnarray*}
G\times A''&\to& A''\\
(g, x) &\mapsto& [\phi(g)] \cdot \prescript{g}{}{x} \cdot [\phi(g)]^{-1}\,;
\end{eqnarray*}
The cocycle condition implies that these are well-defined actions\,; two equivalent $1$-cocycles will define isomorphic $G$-groups. The new $G$-groups are denoted by $A'_{\phi}$ and $A''_{\phi}$ respectively, called the \emph{twisting} by $\phi$ of $A'$ and $A''$.
\end{definition}

Returning to \eqref{eqn:LongES}, by \cite[$\S$1.16]{BorelSerre}, there is a right action of $A''^G$ on $H^1(G, A')$\,: given $a'' \in A''^G$ and $c \in H^1(G, A')$, choose a lift $a$ of $a''$ in $A$ and a representative $\phi$ for $c$ in $Z^1(G, A')$. Then the right action of $a''$ sends $c$ to the class in $H^1(G, A')$ represented by the cocycle $\phi'(g) = a^{-1} \cdot \phi(g) \cdot \prescript{g}{}{a}$. This class is well-defined and independent of the choices involved. The importance of this action is that it can be used to describe the fibres of the last map in \eqref{eqn:LongES}\,:

\begin{lemma}[{\emph{cf.}~\cite[Corollaire~1.18]{BorelSerre}}]\label{lemma:twist}
In the exact sequence (\ref{eqn:LongES}) induced by a short exact sequence (\ref{eqn:SES}), the fiber of the last map through an element of $H^{1}(G, A)$ represented by a $1$-cocycle $\phi\in Z^{1}(G, A)$ is in bijection with the set of orbits of $H^{1}(G, A'_{\phi})$ under the action of ${A''}_{\phi}^{G}$.\\
In particular, if $H^{1}(G, A'')$ is finite and $H^{1}(G, A'_{\phi})$ is finite for any $\phi\in Z^{1}(G, A)$, then $H^{1}(G, A)$ is also finite. 
\end{lemma}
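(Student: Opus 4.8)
The plan is to prove the fiber description by using the twisting of Definition~\ref{def:Twist} to transport an arbitrary fiber of the last map in \eqref{eqn:LongES} to the fiber over the base point, where exactness applies directly; the finiteness statement then follows formally. First I would introduce the ambient twist: for $\phi\in Z^{1}(G,A)$, let $A_{\phi}$ be the $G$-group with underlying group $A$ and action $(g,x)\mapsto \phi(g)\cdot\prescript{g}{}{x}\cdot\phi(g)^{-1}$. The cocycle condition makes this a genuine action, and it restricts on the normal subgroup $A'$ and descends on the quotient to the twisted actions on $A'_{\phi}$ and $A''_{\phi}$ of Definition~\ref{def:Twist}, so that $1\to A'_{\phi}\to A_{\phi}\to A''_{\phi}\to 1$ is a short exact sequence of $G$-groups. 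The key device is the \emph{translation} $\tau_{\phi}\colon Z^{1}(G,A_{\phi})\to Z^{1}(G,A)$, $\psi\mapsto\psi\cdot\phi$ (pointwise, $g\mapsto\psi(g)\phi(g)$). A direct computation using $\phi(gh)=\phi(g)\prescript{g}{}{\phi(h)}$ shows that $\psi\cdot\phi$ is a cocycle for the \emph{untwisted} action whenever $\psi$ is one for the twisted action, and that $\tau_{\phi}$ matches the two equivalence relations; hence it descends to a bijection of pointed sets $H^{1}(G,A_{\phi})\xrightarrow{\sim}H^{1}(G,A)$ carrying the base point to $[\phi]$. Applying the same to the quotient yields $\tau_{\bar\phi}\colon H^{1}(G,A''_{\phi})\xrightarrow{\sim}H^{1}(G,A'')$, where $\bar\phi$ is the image of $\phi$ in $Z^{1}(G,A'')$, and since $A\to A''$ is equivariant for both actions the square
\[
\xymatrix{
H^{1}(G,A_{\phi}) \ar[r]^-{\tau_{\phi}} \ar[d] & H^{1}(G,A) \ar[d] \\
H^{1}(G,A''_{\phi}) \ar[r]^-{\tau_{\bar\phi}} & H^{1}(G,A'')
}
\]
commutes, the vertical maps being induced by the projection.

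Next I would read off the fiber. Since $\tau_{\bar\phi}$ sends the base point of $H^{1}(G,A''_{\phi})$ to $[\bar\phi]$ and both horizontal maps are bijections, commutativity of the square shows that $\tau_{\phi}$ restricts to a bijection from the base-point fiber of the left vertical map onto the fiber over $[\bar\phi]$ of the right vertical map, the latter being exactly the fiber of the last map in \eqref{eqn:LongES} that contains $[\phi]$. It then remains to identify the base-point fiber of $H^{1}(G,A_{\phi})\to H^{1}(G,A''_{\phi})$. By exactness of \eqref{eqn:LongES} for the twisted sequence this fiber equals the image of the penultimate map $H^{1}(G,A'_{\phi})\to H^{1}(G,A_{\phi})$, and the orbits of the right action of ${A''_{\phi}}^{G}$ on $H^{1}(G,A'_{\phi})$ are precisely the fibers of this penultimate map (\emph{cf.}~\cite[$\S$1.16]{BorelSerre}). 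Hence that image is in bijection with the orbit set, and chaining the bijections identifies the fiber of the last map through $[\phi]$ with the set of ${A''_{\phi}}^{G}$-orbits on $H^{1}(G,A'_{\phi})$, as claimed.

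For the finiteness statement, I would note that $H^{1}(G,A)$ is partitioned by the fibers of the last map in \eqref{eqn:LongES}, whose non-empty members are indexed by a subset of the finite set $H^{1}(G,A'')$; thus there are finitely many of them. Each such fiber contains a class $[\phi]$ and, by the first part, is in bijection with a quotient of $H^{1}(G,A'_{\phi})$, which is finite by hypothesis. A finite union of finite sets being finite, $H^{1}(G,A)$ is finite.

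The step I expect to demand the most care is the non-abelian verification that $\tau_{\phi}$ is well defined and bijective on cohomology --- checking the cocycle identity for $\psi\cdot\phi$ under the untwisted action and the compatibility of the equivalence relations --- together with the orbit-equals-fiber description of the penultimate map. These are the genuinely computational points; by contrast the conceptual move, that twisting converts an arbitrary fiber into the base-point fiber, is immediate once the translation bijection is in hand.
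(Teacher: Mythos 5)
Your proposal is correct and follows essentially the same route as the paper, which states this lemma without its own proof and defers to Borel--Serre (Corollaire 1.18): your translation bijection $\tau_\phi\colon H^1(G,A_\phi)\xrightarrow{\sim}H^1(G,A)$ carrying the base point to $[\phi]$, the reduction of the fiber through $[\phi]$ to the base-point fiber of the twisted sequence, and the identification of the fibers of $H^1(G,A'_\phi)\to H^1(G,A_\phi)$ with the ${A''_\phi}^{G}$-orbits via \cite[$\S$1.16]{BorelSerre} reproduce exactly the standard argument there. The computational verifications (cocycle identity for $\psi\cdot\phi$, compatibility of equivalence relations, commutativity of the square) and the finiteness deduction are all sound.
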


\begin{rem}\label{rmk:fiberbis}
Similarly, if $A'$ is a (not necessarily normal) $G$-subgroup of $A$, then we still get an exact sequence of pointed sets like (\ref{eqn:LongES}) but without the last term and with $A''$ replaced by the pointed set $A/A'$ of left classes \cite[Proposition~1.12]{BorelSerre}. Moreover, each fiber of $H^{1}(G, A')\to H^{1}(G, A)$ has a similar description as in Lemma~\ref{lemma:twist} as the set of orbits of a twisting of $(A/A')^{G}$ under the action of a twisting of $A^{G}$ \cite[Corollaire~1.13]{BorelSerre}.
\end{rem}

\subsection{Cohomological interpretation}
The main interest of introducing the group cohomology is that it `classifies' the real structures up to equivalence. This observation fits into a more general result due to Borel--Serre \cite[2.6]{BorelSerre}. Their statement is in the algebraic setting and holds for any Galois extension\,; while the following version suits us best\,:
\begin{lemma}[\emph{cf.}~{\cite[Proposition 2.6]{BorelSerre}}]\label{lemma:CohomForRealStr}
Let $X$ be a complex manifold. If there exists a real structure $\sigma$ of $X$, then we have a bijection between the set of equivalence classes of real structures and the first cohomology set $H^{1}(\IZ/2\IZ, \Aut(X))$, where the non-trivial element of $\IZ/2\IZ$ acts on $Aut(X)$ by the conjugation by $\sigma$.
\end{lemma}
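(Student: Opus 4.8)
The plan is to write down an explicit bijection between real structures on $X$ and $1$-cocycles in $Z^{1}(\IZ/2\IZ, \Aut(X))$, and then to check that it carries the equivalence relation of Definition~\ref{def:RealStr} to the cohomological equivalence of cocycles, so that it descends to the asserted bijection on $H^{1}$. First I would fix notation: write $G = \IZ/2\IZ = \{1, \tau\}$ and recall that $\tau$ acts on $A := \Aut(X)$ by conjugation by $\sigma$, so $\prescript{\tau}{}{g} = \sigma g \sigma^{-1} = \sigma g \sigma$, using that $\sigma$ is an involution. Since any cocycle $\phi \in Z^{1}(G, A)$ automatically has $\phi(1) = \id$, it is determined by the single element $a := \phi(\tau)$, and evaluating the cocycle relation $\phi(gh) = \phi(g)\cdot\prescript{g}{}{\phi(h)}$ at $g = h = \tau$ identifies $Z^{1}(G, A)$ with the set of $a \in \Aut(X)$ satisfying $a \cdot \prescript{\tau}{}{a} = \id$.

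Next I would set up the correspondence. To a real structure $\sigma'$ I associate $a := \sigma'\sigma^{-1} = \sigma'\sigma$. As $\sigma'$ and $\sigma$ are both anti-holomorphic, their composite $a$ is holomorphic, so $a \in \Aut(X)$; and the involutivity $(\sigma')^{2} = \id$ translates, via the computation $(a\sigma)^{2} = a\cdot(\sigma a \sigma)\cdot\sigma\sigma^{-1}= a\cdot\prescript{\tau}{}{a}$, exactly into the cocycle condition $a\cdot\prescript{\tau}{}{a} = \id$. Conversely, any such $a$ yields an anti-holomorphic involution $\sigma' := a\sigma$, and the two assignments are mutually inverse. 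This gives a bijection between real structures on $X$ and $Z^{1}(\IZ/2\IZ, \Aut(X))$, under which the base point (the trivial cocycle $a = \id$) corresponds to $\sigma$ itself.

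It then remains to match the two equivalence relations. Two real structures $\sigma_{1} = a_{1}\sigma$ and $\sigma_{2} = a_{2}\sigma$ are equivalent precisely when there is $f \in \Aut(X)$ with $\sigma_{1} = f\sigma_{2}f^{-1}$; I would rewrite the right-hand side as $f a_{2}(\sigma f^{-1}\sigma)\sigma = f a_{2}\cdot\prescript{\tau}{}{(f^{-1})}\cdot\sigma$, and compare with $\sigma_{1} = a_{1}\sigma$ to obtain $a_{1} = f a_{2}\,\prescript{\tau}{}{(f^{-1})}$. On the cohomology side, the cocycles $a_{1}, a_{2}$ are equivalent iff there is $b \in A$ with $b\,a_{2} = a_{1}\,\prescript{\tau}{}{b}$, i.e. $a_{1} = b\,a_{2}\,\prescript{\tau}{}{(b^{-1})}$, which is the same condition with $b = f$. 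Hence the bijection descends to a pointed bijection between equivalence classes of real structures and $H^{1}(\IZ/2\IZ, \Aut(X))$, as claimed.

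I do not expect a serious analytic or geometric obstacle here: the entire content is a careful dictionary between the two languages. The only point that demands genuine care is the bookkeeping of conjugation signs — one must consistently use $\sigma^{-1} = \sigma$, the fact that a composite of two anti-holomorphic maps is holomorphic, and the identity $\prescript{\tau}{}{(-)} = \sigma(-)\sigma$ — in order to line up the involutivity constraint with the cocycle condition, and the equivalence of real structures with the cohomological equivalence of cocycles, with all factors in the correct order. Once these identities are matched, the bijection and its compatibility with base points follow immediately.
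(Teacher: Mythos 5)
Your proof is correct and follows essentially the same route as the paper's: identify a cocycle with its value $a=\phi(\tau)$ on the nontrivial element, send $a\mapsto a\circ\sigma$, and check that the cocycle condition corresponds to involutivity and cocycle equivalence to equivalence of real structures. You merely carry out explicitly the computations that the paper's proof of Lemma~\ref{lemma:CohomForRealStr} leaves as ``clear,'' so there is nothing to add.
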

\begin{proof}
For the sake of completeness, let us explain why this lemma is almost tautological (without using \cite{BorelSerre}). As a $1$-cocycle $\phi: \IZ/2\IZ\to \Aut(X)$ is determined by its image $\phi(\bar 1)$, let us  write $\phi:=\phi(\bar 1)\in \Aut(X)$ by abuse of notation. The $1$-cocycle condition says simply that $\phi\circ \sigma$ is an involution, while two $1$-cocycles $\phi, \psi$ are equivalent if and only if $\phi\circ\sigma$ and $\psi\circ \sigma$ are conjugate by an automorphism of $X$. Now it is clear that the following map
\begin{eqnarray*}
H^{1}(\IZ/2\IZ, \Aut(X))=Z^{1}(\IZ/2\IZ, \Aut(X))/\sim &\xrightarrow{\;\simeq\;} & \set{\text{Real structures on $X$}}/\sim\\
\phi &\mapsto& \phi\circ \sigma,
\end{eqnarray*} 
is a well-defined bijection.
\end{proof}

\begin{rem}\label{rem:bridge}
Let $X$ be a complex manifold and $G$ be a finite group. By definition, we have also a bijection between the set of conjugacy classes of Klein actions of $G$ on $X$ and the cohomology set $H^{1}(G, \KAut(X))$, where $G$ acts trivially on $\KAut(X)$. Therefore, an equivalent formulation of Theorem~\ref{main2} is that \emph{for any compact hyperk\"ahler manifold $X$, we have  
\begin{enumerate}
\item The cardinality of finite group that can act faithfully by Klein automorphisms on $X$ is bounded\,;
\item For any finite group $G$, $H^{1}(G, \KAut(X))$ is finite, where $G$ acts trivially on $\KAut(X)$.
\end{enumerate}}
\end{rem}

\subsection{Some algebraic results}
We prove here some results involving group cohomology that we need in the subsequent sections. 

\begin{lemma}\label{lemma:FiniteConj}
Let $A$ be a group. Then there are only finitely many conjugacy classes of finite subgroups of $A$ if and only if the following two conditions are satisfied\,:
\begin{enumerate}
\item The cardinalities of finite subgroups of $A$ are bounded.
\item For any finite group $G$, $H^{1}(G, A)$ is a finite set, where $A$ is endowed with the trivial $G$-action.
\end{enumerate}
Moreover, if $A$ satisfies this property then so does any subgroup of $A$ of finite index.
\end{lemma}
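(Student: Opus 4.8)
The plan is to reduce the whole statement to the elementary observation that, when $G$ acts \emph{trivially} on $A$, the cocycle identity $\phi(gh)=\phi(g)\cdot\prescript{g}{}{\phi(h)}$ degenerates to $\phi(gh)=\phi(g)\phi(h)$, so that a $1$-cocycle $\phi\colon G\to A$ is the same thing as a group homomorphism; and the equivalence relation $a\,\psi(g)=\phi(g)\cdot\prescript{g}{}{a}$ becomes $\phi=a\psi a^{-1}$. Hence $H^{1}(G,A)$ is canonically the set $\Hom(G,A)/\text{conj}$ of conjugacy classes of homomorphisms $G\to A$. Everything then follows by comparing such homomorphisms with finite subgroups via the ``take the image'' map.

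For the \emph{only if} direction, condition (1) is immediate, since conjugate subgroups have equal cardinality and finitely many conjugacy classes therefore realize only finitely many cardinalities. For condition (2), I would fix representatives $H_{1},\dots,H_{n}$ of the conjugacy classes of finite subgroups and note that any homomorphism $\phi\colon G\to A$ has finite image $\phi(G)$, hence $\im(\phi)$ is $A$-conjugate to some $H_{i}$; after conjugating $\phi$ we may assume $\phi(G)\subseteq H_{i}$. As $G$ and $H_{i}$ are finite, $\Hom(G,H_{i})$ is a finite set, so there are at most $\sum_{i}\lvert\Hom(G,H_{i})\rvert$ conjugacy classes of homomorphisms, proving $H^{1}(G,A)$ finite.

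For the \emph{if} direction, I would use (1) to bound finite subgroups by some $N$, let $G_{1},\dots,G_{m}$ be representatives of the finitely many isomorphism classes of finite groups of order $\le N$, and consider the conjugation-equivariant map sending an \emph{injective} homomorphism $G_{j}\to A$ to its image. This yields a surjection from $\bigsqcup_{j}\{\text{injective homs }G_{j}\to A\}/\text{conj}$ onto the set of conjugacy classes of finite subgroups of $A$ (every finite subgroup $H$ is the image of some inclusion $G_{j}\cong H\hookrightarrow A$). The source is contained in $\bigsqcup_{j}H^{1}(G_{j},A)$, which is finite by (2), so the target is finite as well.

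For the \emph{moreover} part, let $B\le A$ have index $d<\infty$. Every finite subgroup of $B$ is a finite subgroup of $A$, hence $A$-conjugate to one of finitely many representatives $H_{i}$; the crux is to bound the number of $B$-conjugacy classes among the $A$-conjugates of a fixed $H_{i}$. The plan is to identify the $A$-conjugates of $H_{i}$ with the left cosets $A/N_{A}(H_{i})$ through $a\mapsto aH_{i}a^{-1}$, and to verify that $aH_{i}a^{-1}$ and $a'H_{i}a'^{-1}$ are $B$-conjugate exactly when $a,a'$ define the same double coset in $B\backslash A/N_{A}(H_{i})$; the surjection $B\backslash A\twoheadrightarrow B\backslash A/N_{A}(H_{i})$ then shows there are at most $[A:B]=d$ such classes, for a total of at most $nd$. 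I expect this double-coset bookkeeping—correctly translating $B$-conjugacy of subgroups into an equality of double cosets and keeping the left/right conventions straight—to be the main obstacle, even though it is ultimately elementary.
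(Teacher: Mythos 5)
Your proof is correct. For the equivalence itself you follow essentially the same route as the paper: both identify $H^{1}(G,A)$ (trivial action) with $\Hom(G,A)/\!\sim_{\conjug}$, prove the ``if'' direction by sending injective homomorphisms $G_{j}\to A$ to their images, and prove the ``only if'' direction by controlling a homomorphism through its finite image up to conjugacy (the paper counts slightly differently, fixing both the kernel and the image and then counting isomorphisms $G/K\to H$, but this is the same idea and your bound $\sum_{i}\lvert\Hom(G,H_{i})\rvert$ is equally valid).

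Where you genuinely diverge is the ``moreover'' part. The paper stays inside nonabelian group cohomology: it invokes the Borel--Serre exact sequence of pointed sets
\[
1\to A'^{G}\to A^{G}\to (A/A')^{G}\to H^{1}(G,A')\to H^{1}(G,A)
\]
and the twisting formalism (their Corollaire 1.13) to show each fiber of the last map is a quotient of the finite set $A/A'$, deducing finiteness of $H^{1}(G,A')$ directly, and only then does the subgroup statement follow from the already-proved equivalence. You instead bypass cohomology entirely: you bound the number of $B$-conjugacy classes of finite subgroups by identifying the $A$-conjugates of a representative $H_{i}$ with $A/N_{A}(H_{i})$ and checking that $B$-conjugacy corresponds exactly to equality of double cosets in $B\backslash A/N_{A}(H_{i})$, which has at most $[A:B]$ elements; your double-coset computation ($b\,aH_{i}a^{-1}b^{-1}=a'H_{i}a'^{-1}$ iff $a\in Ba'N_{A}(H_{i})$) is right, and condition (2) for the subgroup then follows from the equivalence. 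Your route is more elementary and even quantitative (at most $n\cdot[A:B]$ classes), whereas the paper's route exercises the twisting machinery of Lemma~\ref{lemma:twist} and Remark~\ref{rmk:fiberbis} that it reuses in the proofs of Lemma~\ref{lemma:filtration} and Theorem~\ref{main2-proj}, so each choice has a genuine payoff.
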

\begin{proof}
Let us first show the equivalence\,:

For the `if' part\,: on one hand, by condition (1), there are only finitely many possibilities for the isomorphism class of the finite subgroup of $A$. On the other hand, for any fixed abstract finite group $G$, the set of conjugacy classes of subgroups of $A$ with an isomorphism to $G$ is in bijection with the subset of $H^{1}(G, A):=\Hom(G, A)/\sim_{\conjug}$ consisting of classes of injective homomorphisms, hence finite. By forgetting the isomorphisms to $G$, this implies that the set of conjugacy classes of subgroups of $A$ that are isomorphic to $G$ is finite.

For the `only if' part, (1) is clear. For (2), we identify again $H^{1}(G, A)$ with homomorphisms from $G$ to $A$ up to conjugation. To determine such a homomorphism, firstly there are obviously only finitely many possibilities for the kernel\,; secondly, by assumption there are only finitely many possibilities for the image, up to conjugacy; while for each fixed kernel $K$ and image $H\subseteq A$, the set of conjugacy classes of the homomorphisms is in bijection with the finite set of group isomorphisms from $G/K$ to $H$. Therefore, $H^{1}(G, A)$ is finite.

Finally for the last assertion, let $A'$ be a subgroup of $A$ of finite index. Then the condition (1) obviously passes to any subgroup and we only need to check (2) for $A'$. Let $G$ be any finite group, then we have an exact sequence of pointed sets, where $A/A'$ is the (finite) $G$-set of left classes (\cite[Proposition~1.12]{BorelSerre})\,:
\begin{eqnarray}\label{eqn:MiddleES}
 1\to {A'}^{G}\to A^{G}\to (A/A')^{G}\to H^{1}(G, A')\to H^{1}(G, A).
\end{eqnarray}
The last term of \eqref{eqn:MiddleES} being finite by assumption, the finiteness of $H^{1}(G, A')$ is equivalent to the finiteness of fibers of the last map in \eqref{eqn:MiddleES}. Thanks to \cite[Corollaire~1.13]{BorelSerre}, the fiber through an element of $H^{1}(G,A')$ represented by a $1$-cocycle $\phi\in Z^{1}(G, A')$ is in bijection with the set of orbits of $(A_{\phi}/A'_{\phi})^{G}$ under the action of $A^{G}_{\phi}$, where $A_{\phi}$ and $A'_{\phi}$ are the $G$-groups obtained by twistings by $\phi$ (Definition~\ref{def:Twist}, Remark \ref{rmk:fiberbis}). In any case, $A/A'$ is a finite set, hence so are the fibers of the last map in \eqref{eqn:MiddleES}. The finiteness of $H^{1}(G, A')$ is proved.
\end{proof}

The next lemma is known, but we give here a proof for the sake of completeness.

\begin{lemma}\label{lemma:GoodGroups}
Let $G$ be a finite group, and let $A$ be a group endowed with a $G$-action. If $A$ is either a finite group or an abelian group of finite type, then $H^1(G, A)$ is a finite set (regardless of the action of $G$).
\end{lemma}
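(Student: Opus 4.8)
The plan is to treat the two hypotheses on $A$ separately, since the mechanism producing finiteness is different in each.

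First I would dispose of the case where $A$ is a finite group. This is essentially formal\,: any $1$-cocycle $\phi\colon G\to A$ is in particular a map of sets, so $Z^{1}(G,A)$ is a subset of $\operatorname{Map}(G,A)$, a set of cardinality $|A|^{|G|}$. Hence $Z^{1}(G,A)$ is finite, and therefore so is its quotient $H^{1}(G,A)$, whatever the $G$-action may be.

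The real content is the case where $A$ is a finitely generated abelian group. By Remark~\ref{rem: abelian group cohomology}, $H^{1}(G,A)$ is then an honest abelian group $Z^{1}(G,A)/B^{1}(G,A)$, and I would deduce its finiteness from two facts. The first is that it is finitely generated\,: $Z^{1}(G,A)$ is a subgroup of $\operatorname{Map}(G,A)\cong A^{|G|}$, which is finitely generated abelian, and subgroups and quotients of finitely generated abelian groups are again finitely generated. The second is that $H^{1}(G,A)$ is annihilated by $n:=|G|$. For the latter I would give the direct cocycle computation\,: given $\phi\in Z^{1}(G,A)$, set $a:=\sum_{h\in G}\phi(h)$\,; summing the (additive) cocycle relation $\phi(gh)=\phi(g)+\prescript{g}{}{\phi(h)}$ over $h\in G$ and reindexing the left-hand side via $h\mapsto gh$ yields $a=n\,\phi(g)+\prescript{g}{}{a}$, so that $n\,\phi(g)=a-\prescript{g}{}{a}$ is the coboundary of $-a$\,; hence $n\cdot[\phi]=0$ in $H^{1}(G,A)$.

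Putting these together, $H^{1}(G,A)$ is a finitely generated abelian group of exponent dividing $n$, and is therefore finite. I do not expect a genuine obstacle here\,: the only step deserving attention is the computation that $|G|$ annihilates $H^{1}$, which may alternatively be phrased as the standard fact that $\operatorname{cor}\circ\operatorname{res}$ is multiplication by $n$ on $H^{1}(G,A)$, combined with the vanishing of the cohomology of the trivial group.
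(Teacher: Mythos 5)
Your proposal is correct and takes essentially the same approach as the paper\,: the finite case is dismissed by the trivial bound on cocycles, and in the finitely generated abelian case both arguments show that $H^{1}(G,A)$ is a finitely generated abelian group annihilated by $|G|$, using the identical summing-over-$G$ cocycle computation (your auxiliary element $a$ is the paper's $-x$). The restriction--corestriction rephrasing you mention is a standard alternative but plays no role in either proof.
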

\begin{proof}
If $A$ is finite, then $H^1(G, A)$ is finite by definition. Assume now that $A$ is a finitely generated abelian group, then $H^{1}(G, A)$ is the quotient of the abelian group of $1$-cocycles $Z^{1}(G,A)$ by the subgroup of $1$-coboundaries $B^{1}(G,A)$, see Remark~\ref{rem: abelian group cohomology}. It is easy to see that the set of all maps $\set{f: G \longrightarrow A}$ is a finitely generated abelian group (which is isomorphic to $A^{|G|}$). Hence so are the subgroups $Z^1(G, A)$ and $B^1(G, A)$. Hence $H^{1}(G,A)$ inherits in a natural way the structure of finitely generated abelian group. Let now $f \in Z^1(G, A)$, and define $x = -\sum_{g \in G} f(g)$\,: we observe that for every $s \in G$ we have the equalities
\[\prescript{s}{}{x} = -\sum_{g \in G} \prescript{s}{}{f(g)} = \sum_{g \in G} (f(s) - f(sg)) = |G| f(s) + x,\]
showing that $|G|f$ is a 1-coboundary. This implies that $H^1(G, A)$ is of torsion, hence finite.
\end{proof}

The following algebraic result is a key gadget needed in the proof of main results.

\begin{lemma}[Filtration]\label{lemma:filtration}
Let $A$ be a group. Assume that there is a finite filtration
\[\set{1} = A_n \subseteq A_{n - 1} \subseteq \ldots \subseteq A_1 \subseteq A_0 = A\]
by normal subgroups of $A$, such that for any $0\leq i\leq n-1$, $A_{i}/A_{i+1}$ is either a finite group or an abelian group of finite type. Then there are only finitely many conjugacy classes of finite subgroups in $A$. Moreover, for any finite group $G$ and any $G$-action on $A$ preserving the filtration, $H^{1}(G, A)$ is finite.   
\end{lemma}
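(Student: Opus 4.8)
The plan is to argue by induction on the length $n$ of the filtration, at each stage peeling off the smallest nontrivial term $A_{n-1}$. The base case is the filtration of length one, in which $A = A_0/A_1$ is itself finite or a finitely generated abelian group: there its finite subgroups have order bounded by $|A|$ (respectively by the order of the torsion subgroup), and $H^1(G, A)$ is finite by Lemma \ref{lemma:GoodGroups} for \emph{any} action. For the inductive step, note that because the $G$-action preserves the filtration, the subgroup $A_{n-1}$ is $G$-stable (and normal in $A$ by hypothesis), so that
\[ 1 \longrightarrow A_{n-1} \longrightarrow A \longrightarrow A/A_{n-1} \longrightarrow 1 \]
is a short exact sequence of $G$-groups. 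The quotient $A/A_{n-1}$ carries the induced filtration $\set{1} = A_{n-1}/A_{n-1} \subseteq \ldots \subseteq A_0/A_{n-1}$ of length $n-1$, which is preserved by the induced action and whose successive quotients are isomorphic to the $A_i/A_{i+1}$, hence still finite or finitely generated abelian. Thus the inductive hypothesis applies to $A/A_{n-1}$.

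I would first dispatch the boundedness of finite subgroups. For a finite subgroup $H \subseteq A$, the second isomorphism theorem gives $|H| = |H \cap A_{n-1}| \cdot |\overline{H}|$, where $\overline{H}$ is the image of $H$ in $A/A_{n-1}$. The first factor is bounded because $A_{n-1} = A_{n-1}/A_n$ is finite or finitely generated abelian, and so contains only finitely many elements of finite order; the second is bounded by the inductive hypothesis. Hence the orders of finite subgroups of $A$ are bounded.

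For the ``moreover'' statement I would invoke Lemma \ref{lemma:twist} applied to the short exact sequence above, with $A' = A_{n-1}$ and $A'' = A/A_{n-1}$. By the inductive hypothesis $H^1(G, A/A_{n-1})$ is finite. For any $1$-cocycle $\phi \in Z^1(G, A)$, the twist $A'_\phi$ has the \emph{same} underlying group $A_{n-1}$ (twisting by Definition \ref{def:Twist} alters only the action), which remains finite or finitely generated abelian; therefore $H^1(G, A'_\phi)$ is finite by Lemma \ref{lemma:GoodGroups}, regardless of the twisted action. The ``in particular'' clause of Lemma \ref{lemma:twist} then forces $H^1(G, A)$ to be finite, completing the induction. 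I expect the only delicate point to be exactly this one: Lemma \ref{lemma:twist} demands finiteness of $H^1$ for \emph{all} twists of the subgroup, not merely for the given action, and this is precisely why one peels off the term $A_{n-1}$ (rather than working with an arbitrary quotient), since its twists retain the ``finite or finitely generated abelian'' property to which Lemma \ref{lemma:GoodGroups} directly applies.

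Finally, specializing the ``moreover'' part to the trivial $G$-action—which vacuously preserves the filtration—shows that $H^1(G, A)$ is finite for every finite group $G$ when $A$ carries the trivial action. Together with the boundedness of finite subgroups established above, Lemma \ref{lemma:FiniteConj} yields that $A$ has only finitely many conjugacy classes of finite subgroups, which is the first assertion of the lemma.
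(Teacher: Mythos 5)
Your proof is correct, but it runs the induction in the opposite direction from the paper's, using the same three ingredients (Lemmas \ref{lemma:FiniteConj}, \ref{lemma:GoodGroups} and \ref{lemma:twist}). The paper argues by descending induction along the filtration, proving for each $k$ that finite subgroups of $A_k$ have bounded order and that $H^1(G, A_k)$ is finite for every action preserving $A_j$ for $j > k$; its short exact sequence is $1 \to A_{i+1} \to A_i \to A_i/A_{i+1} \to 1$, so the \emph{subgroup} carries the remaining filtration and receives the inductive hypothesis, while the quotient is the single graded piece handled by Lemma \ref{lemma:GoodGroups}. Since Lemma \ref{lemma:twist} requires finiteness of $H^1(G, (A_{i+1})_\phi)$ for \emph{all} cocycles $\phi$, the paper must feed twisted actions into its inductive hypothesis, which is legitimate precisely because the $A_j$ are normal in the ambient group, so twisting still preserves the filtration (the point of Remark \ref{rm:NormalityPb}). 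You instead peel off the bottom term: in $1 \to A_{n-1} \to A \to A/A_{n-1} \to 1$ the \emph{quotient} carries the induced length-$(n-1)$ filtration and receives the inductive hypothesis with the plain induced action, while the subgroup is the single graded piece $A_{n-1}$, whose twists are absorbed unconditionally by Lemma \ref{lemma:GoodGroups}. This localizes the twisting issue---which you correctly single out as the delicate point---to the one place where it is harmless, so your inductive statement is literally the lemma itself for shorter filtrations, with no need to quantify over twisted actions; the trade-off is nil, since your route uses the full normality hypothesis just as essentially (you need $A_{n-1}$ normal in $A$ to form the sequence and its twists per Definition \ref{def:Twist}, and each $A_i$ normal in $A$ so that the induced filtration of $A/A_{n-1}$ is by normal subgroups). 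Both arguments are of comparable length and difficulty; yours is arguably a touch cleaner in its bookkeeping.
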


\begin{proof}
By Lemma~\ref{lemma:FiniteConj} and Lemma~\ref{lemma:GoodGroups}, we have for any $0\leq i\leq n-1$, the following two properties
\begin{enumerate}
\item[$(1_{i})$] The cardinalities of finite subgroups of $A_{i}/A_{i+1}$ are bounded.
\item[$(2_{i})$] For any finite group $G$ and \emph{any} action of $G$ on $A_{i}/A_{i+1}$, $H^{1}(G, A_{i}/A_{i+1})$ is finite.
\end{enumerate}
We prove the following two properties, which are trivial for $k=n$, by descending induction on $k$\,:
\begin{enumerate}
\item The cardinalities of finite subgroups of $A_{k}$ are bounded.
\item For any finite group $G$ and any $G$-action on $A_{k}$ that preserves $A_{j}$ for all $j>k$,  $H^{1}(G, A_{k})$ is a finite set.
\end{enumerate}
Assuming these are true for $k=i+1$, let us show them for $k=i$. 

For (1), let $G$ be any finite subgroup of $A_{i}$, then $|G\cap A_{i+1}|$ is bounded by the induction hypothesis (1) for $k=i+1$ and $G/G\cap A_{i+1}$ is a subgroup of $A_{i}/A_{i+1}$, whose cardinality is bounded by $(1_{i})$. Therefore the cardinality of $G$ is bounded. (1) is proved for $k=i$.

For (2), let $G$ be a finite group which acts on $A_{i}$ preserving $A_{j}$ for all $j>i$. The  short exact sequence of $G$-groups
\[\xymatrix{1 \ar[r] & A_{i+1} \ar[r] & A_{i} \ar[r] & A_{i}/ A_{i+1} \ar[r] & 1,}\]
induces an exact sequence of pointed sets
\begin{equation*}
\xymatrix{ \cdots\to (A_{i}/ A_{i+1})^{G} \ar[r] &  H^{1}(G, A_{i+1}) \ar[r] & H^{1}(G, A_{i}) \ar[r] & H^{1}(G, A_{i}/ A_{i+1}).}
\end{equation*}
The last set being finite by $(2_{i})$, the finiteness of $H^{1}(G, A_{i})$ is equivalent to the finiteness of all fibers of the last map in the previous exact sequence. By Lemma~\ref{lemma:twist}, it is enough to show that $H^{1}(G, (A_{i+1})_{\phi})$ is finite for all $\phi\in Z^{1}(G, A_{i})$, where $(A_{i+1})_{\phi}$ is the group $A_{i+1}$ with the $G$-action twisted by the $1$-cocycle $\phi$ (Definition~\ref{def:Twist}). As all subgroups $A_{j}$ are normal in $A_{i}$ for all $j>i$, the $\phi$-twisted $G$-action on $A_{i+1}$ preserves $A_{j}$ for all $j>i+1$, thus by the induction hypothesis (2) for $k=i+1$, $H^{1}(G, (A_{i+1})_{\phi})$ is indeed finite. Therefore $H^{1}(G, A_{i})$ is finite and (2) is proved for $k=i$.

The induction process being achieved, we take $k=0$ and can conclude by invoking Lemma~\ref{lemma:FiniteConj}.
\end{proof}

\begin{rem}\label{rm:NormalityPb}
Apparently, the previous lemma should be compared to \cite[D.1.7]{RealEnriques}, where each subgroup in the filtration is only required to be normal in the precedent one but not necessarily in the ambiant group. However, the authors think the statement in \emph{loc.~cit.}~is flawed at this point\,: the normality inside the whole group is necessary and implicitly used in the proof there (except in the case that $G=\IZ/2\IZ$, the statement in \cite[D.1.7]{RealEnriques} is still true and the proof can be amended by using a conjugate filtration each time). On the other hand, in Lemma~\ref{lemma:filtration} we also allow the constraints on the subquotients of the filtration to be slightly more flexible. Needless to say, the idea of the statement, the proof and the usage of Lemma~\ref{lemma:filtration} are essentially due to \cite[D.1.7]{RealEnriques}.
\end{rem}

\section{Compact hyperk\"ahler manifolds}
Let us now specialize to a particularly interesting class of complex manifolds\,:
\begin{definition}
A \emph{compact hyperk\"ahler manifold} is a compact K\"ahler manifold $X$ such that 
\begin{itemize}
\item $X$ is simply connected\,;
\item $H^{0}(X, \Omega_{X}^{2})=\IC\cdot \eta$ with $\eta$ nowhere degenerate. 
\end{itemize} 
\end{definition}
In particular, it is an even-dimensional complex manifold with trivial canonical bundle. A generic hyperk\"ahler  manifold in the moduli space is non-projective. We refer to \cite{Beauville}, \cite{HuyInventiones}, \cite{GrossHuybrechtsJoyce} and \cite{Markman} for the basic theory of compact hyperk\"ahler manifolds. In this section, we will recall some needed results and extend them to the version that we apply in the proof of main theorems. 

Fix a compact hyperk\"ahler manifold $X$ of complex dimension $2n$. One crucial structure we need is the Beauville--Bogomolov--Fujiki (BBF) quadratic form \cite{Beauville} on $H^{2}(X,\IZ)$.

\subsection{Action on the BBF lattice}\label{sect: action on BBF lattice}
Let $\eta \in H^{2, 0}(X)$ be a generator such that
\[\int_X (\eta \bar{\eta})^n = 1.\]
Then the \emph{Beauville--Bogomolov--Fujiki form} (\cite[\S 8]{Beauville}, \cite{MR514769}, \cite{MR946237}) on the space $H^2(X, \IC)$ is the quadratic form which associates to any $\alpha\in H^{2}(X, \IC)$ the following 
\begin{equation}\label{eqn:BBFform}
 q(\alpha) = \frac{n}{2} \int_X (\eta \bar{\eta})^{n - 1} \alpha^2 + (1 - n) \int_X \eta^{n - 1} \bar{\eta}^n \alpha \cdot \int_X \eta^n \bar{\eta}^{n - 1} \alpha.
\end{equation}
Up to a scalar, this quadratic form induces a non-degenerate integral symmetric bilinear form on $H^{2}(X, \IZ)$ of signature $(3, b_{2}(X)-3)$ (\emph{cf.}~\cite[Part III]{GrossHuybrechtsJoyce}), which makes $H^{2}(X, \IZ)$ a lattice, called the \emph{BBF lattice} of $X$.

\begin{rem}[Isometry]\label{rem:Isometry}
For any $f\in \KAut(X)$, the action $f^{\dagger}$ defined in \S \ref{subsect:Dagger} is an isometry on $H^{2}(X, \IC)$ with respect to the BBF form. In fact, any $f \in \KAut(X)$ is an orientation-preserving diffeomorphism of the underlying differentiable manifold (in the anti-holomorphic case, we use the fact that $\dim X$ is even), and so $f^*$ is an isometry for the BBF form by \cite[Theorem 5.3]{VerbitskyTorelli}. In particular, the BBF lattice $H^{2}(X, \IZ)$ admits a right action of $\KAut(X)$ via $\dagger$.
\end{rem}

\subsection{Torelli theorems for hyperk\"ahler manifolds}\label{sect: existence}

We review some facts on the moduli space of compact hyperk\"ahler manifolds\,: Verbitsky's Global Torelli Theorem and Markman's Torelli Theorem for maps. We will provide an extension of the latter which deals also with anti-holomorphic (or Klein) automorphisms.

Let $X = (M, I)$ be a compact hyperk\"ahler manifold, where $M$ is the underlying differentiable manifold and $I$ the complex structure. Recall that the \emph{period domain} is the complex manifold
\[\Omega = \set{[\sigma] \in \IP(H^2(M, \IC)) \st (\sigma, \sigma) = 0, \, (\sigma, \bar{\sigma}) > 0},\]
where the pairing $(-, -)$ is given by the Beauville--Bogomolov--Fujiki form on $X$.

Denote by $MCG(M) = \Diff(M) / \Diff^{0}(M)$ the \emph{mapping class group} of $M$, where $\Diff(M)$ is the group of orientation-preserving diffeomorphisms of $M$ and $\Diff^{0}(M)$ is its identity component, that is, the group of isotopies. Let
\[\operatorname{Teich}:=\left\{\text{complex structures of K\"ahler type on } M\right\} / \Diff^{0}(M)\]
be the \emph{Teichm\"uller space} of $M$, upon which $MCG(M)$ naturally acts.

Let $\operatorname{Teich}^0$ be the connected component of $\operatorname{Teich}$ to which $X$ (or rather $I$) belongs. Note that $\bar{X}$ (or rather $-I$) also belongs to $\operatorname{Teich}^0$, thanks to the existence of the twistor space. 
Denote by $\operatorname{Teich}_{b}^{0}$ the Hausdorff reduction of $\operatorname{Teich}^{0}$.  Let
\[\begin{array}{rccl}
\tilde\cP: & \operatorname{Teich}_{b}^0 & \longrightarrow & \Omega\\
 & Y = (M, I') & \longmapsto & \IP(H^{2, 0}(Y)),
\end{array}\]
be the period map. One of the most remarkable progress in the study of hyperk\"ahler manifolds is the following
\begin{theorem}[Global Torelli Theorem, \emph{cf.}~{\cite[Theorem 1.17]{VerbitskyTorelli}}]\label{thm:VerbGTT}
Let the notation be as above. The maps $\tilde\cP$ is an isomorphism.
\end{theorem}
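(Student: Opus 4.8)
The plan is to realize $\tilde\cP$ as an étale covering of the connected, simply connected period domain $\Omega$, and then conclude that it must be an isomorphism. Concretely, I would establish three things: that $\tilde\cP$ is a local biholomorphism (local Torelli), that it is surjective, and that it is a covering map whose fibres are exactly the non-separated points of $\operatorname{Teich}^0$ (so that after Hausdorff reduction it becomes injective). Granting these, since $\Omega$ is simply connected and $\operatorname{Teich}_b^0$ is connected, any connected covering must be trivial, so $\tilde\cP$ is a homeomorphism, and being a local biholomorphism, an isomorphism.

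First I would settle local Torelli. By the unobstructedness of deformations of holomorphic symplectic manifolds (Bogomolov--Tian--Todorov), the component $\operatorname{Teich}^0$ is smooth of complex dimension $b_2(X) - 2$. At a point $Y = (M, I')$ the differential of the period map is computed, via the Kodaira--Spencer identification $H^1(Y, T_Y) \cong H^1(Y, \Omega_Y^1) = H^{1,1}(Y)$ furnished by the holomorphic symplectic form, as the canonical map to the tangent space $T_{\tilde\cP(Y)}\Omega$, and one checks this is an isomorphism. As $\Omega$ is a smooth complex manifold of the same dimension $b_2(X) - 2$, being an open subset of the quadric $\{(\sigma, \sigma) = 0\}$ in $\IP(H^2(M, \IC))$, the map $\tilde\cP$ is a local biholomorphism, in particular open. (It is well-defined on the Hausdorff reduction because a local homeomorphism to a Hausdorff space is automatically constant on non-separated pairs.)

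The surjectivity rests on the twistor construction. A hyperk\"ahler metric on $X$, which exists by Yau's theorem once a K\"ahler class is fixed, yields a holomorphic family of complex structures parametrised by $\IP^1$, the \emph{twistor family}, all of whose members lie in $\operatorname{Teich}^0$ and whose image under $\tilde\cP$ is a rational curve in $\Omega$, a \emph{twistor line}. Varying the hyperk\"ahler metric, these twistor lines sweep out all of $\Omega$, so $\tilde\cP$ is onto.

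The heart of the matter, and what I expect to be by far the hardest step, is the covering property together with the identification of the fibres. The twistor families provide an abundance of rational curves in $\operatorname{Teich}^0$ lying over twistor lines in $\Omega$, and Verbitsky's key geometric input is that through a generic point enough twistor lines pass that any two points of $\Omega$ can be joined by a chain of them; this yields a path-lifting property for $\tilde\cP$ and hence shows it is a covering map onto $\Omega$. The delicate, genuinely non-formal part is to prove that two points of $\operatorname{Teich}^0$ with the same period are never separated by disjoint open sets, equivalently that they are bimeromorphic as hyperk\"ahler manifolds, so that the Hausdorff reduction collapses each fibre to a single point. Combining the covering property with the simple connectivity of $\Omega$ then forces $\tilde\cP\colon \operatorname{Teich}_b^0 \to \Omega$ to be single-sheeted, hence bijective, and being a local biholomorphism it is an isomorphism.
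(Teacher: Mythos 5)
You should first be aware that the paper does not prove this statement at all: it is imported verbatim from Verbitsky, cited as \cite[Theorem 1.17]{VerbitskyTorelli}, and everything in \S 5.2 of the paper is built on top of it as a black box. So there is no proof in the paper to compare against; what you have written is, in effect, a high-level reconstruction of Verbitsky's own published argument, and as a roadmap it is accurate: local Torelli via Bogomolov (unobstructedness giving $\dim \operatorname{Teich}^0 = b_2 - 2 = \dim \Omega$ and \'etaleness of $\tilde\cP$), twistor lines, a covering-space argument, and the simple connectivity of $\Omega$. Your observation that any continuous map to a Hausdorff space automatically identifies non-separated points, so that $\tilde\cP$ descends to $\operatorname{Teich}^0_b$, is also correct.

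Judged as a proof rather than an outline, however, the hard content is entirely deferred. Concretely: (i) surjectivity is not a separate easy step as you present it --- a single twistor family only produces periods lying in the positive $3$-plane determined by one hyperk\"ahler metric, so ``varying the metric sweeps out $\Omega$'' already presupposes the chain-connectivity of $\Omega$ by \emph{generic} twistor lines together with their liftability to $\operatorname{Teich}^0$, which is exactly the input you flag but do not supply; (ii) the fibre identification needs both directions, and the non-trivial one --- equal periods implies non-separated --- rests on Huybrechts' theorem that non-separated points of the Teichm\"uller space correspond to bimeromorphic manifolds (\emph{cf.}~\cite{HuyInventiones}) combined with a genuinely delicate density/deformation argument; asserting it does not discharge it; (iii) the final covering-space step is applied to $\operatorname{Teich}^0_b$, but the Hausdorff reduction of a non-Hausdorff manifold is not a priori a manifold, nor even obviously locally path-connected, and one must check that \'etaleness descends to it --- this is precisely one of the subtle points in Verbitsky's paper (and the subject of his later erratum), so it cannot be waved through with ``any connected covering of a simply connected space is trivial.'' In short, your proposal is a faithful summary of the strategy behind the cited theorem, but each of its three pillars is stated rather than proved, and the theorem is deep enough that the paper's authors --- reasonably --- chose citation over proof.
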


Let $MCG^{0}(M)$ be the subgroup of $MCG(M)$ consisting of elements preserving the component $\operatorname{Teich}^{0}$, then $MCG^{0}(M)$ acts on $H^2(M, \IZ)$ preserving the Beauville--Bogomolov--Fujiki form.

\begin{definition}[Monodromy group]
The \emph{monodromy group} $\Mon^{2}:=\Mon^2(X)$ is the image of $MCG^{0}(M)$ in $O(H^2(M, \IZ))$ (\emph{cf.}~\cite[Definition 2.12]{AmerikVerbitsky}). Equivalently, it is the subgroup of $O(H^2(X, \IZ))$ generated by the monodromy transformations in the local systems $R^2 \pi_* \IZ$ where $\pi: \cX \longrightarrow B$ is a deformation of $X$ over a complex base (\emph{cf.}~\cite[Definition 1.1]{Markman} and \cite[Remark.~2.21]{AV-rational}).
\end{definition}

More generally, a \emph{parallel transport operator} is an isomorphism $H^2(X, \IZ) \longrightarrow H^2(Y, \IZ)$ which is induced by parallel transport in the local system $R^2 \pi_* \IZ$ along a path $\gamma$, where $\pi: \cX \longrightarrow B$ is a deformation such that $\cX_{t_0} = X$, $\cX_{t_1} = Y$, and $\gamma$ is a path in $B$ joining $t_0$ with $t_1$ (\emph{cf.}~\cite[Definition 1.1]{Markman}).

The groups $MCG^0(M)$ and $\Mon^2(X)$ naturally act on $\operatorname{Teich}^0_b$ and $\Omega$ respectively. Since the period map $\tilde{P}$ is equivariant with respect to these two actions, we get a homeomorphism between the quotient spaces
\[\cP: \operatorname{Teich}^{0}_{b}/ MCG^{0}(M) \longrightarrow \Omega / \Mon^{2}(X).\]
Observe that the space $\Omega / \Mon^{2}(X)$ is extremely non-Hausdorff\,: as pointed out in \cite[Remark 3.12]{MR3413979}, every two non-empty open subsets intersect.

Consider the natural real structure $\tilde r$ on $\Omega$ defined by $\tilde{r}([\sigma]) = [\bar{\sigma}]$\,: it has an \emph{empty} real locus on $\Omega$, and since it commutes with the action of $\Mon^2(X)$ it defines a homeomorphism $r: \Omega/\Mon^2(X) \longrightarrow \Omega/\Mon^2(X)$ of order $2$. Moreover, $\tilde{r}$ corresponds via the period map to the real structure $\tilde R$ on $\operatorname{Teich}^0_b$ defined by $\tilde{R}([X]) = [\overline X]$, and induces an order-2 homeomorphism $R$ on $\operatorname{Teich}^0_b/MCG^0(M)$. We can think of $r$ and $R$ as the natural real structures on the homeomorphic non-Hausdorff spaces $\Omega/\Mon^2(X)$ and $\operatorname{Teich}^0_b/MCG^0(M)$, which are the so-called \emph{birational moduli space} $\cM^0_b$ in \cite{VerbitskyTorelli}.

The above consideration yields the following characterization.

\begin{prop}\label{prop:BirToConj}
Let $X$ be a compact hyperk\"ahler manifold. Then the following conditions are equivalent\,:
\begin{itemize}
\item  $X$ is bimeromorphic to $\bar{X}$\,;
\item The class of $X$ is a fixed point for $R$ in $\operatorname{Teich}^0_B/MCG^0(M)$\,;
\item The period $\cP(X)$ is a fixed point for $r$ in $\Omega / \Mon^2$.
\end{itemize} 
\end{prop}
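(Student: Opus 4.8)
The plan is to prove the three equivalences by moving through the dictionary furnished by Verbitsky's Global Torelli Theorem (Theorem~\ref{thm:VerbGTT}) and the compatibilities of the natural real structures $\tilde r$, $r$, $\tilde R$, $R$ recorded just above the statement. The key observation is that all three conditions are really statements about the single point $[X]$ in one of the homeomorphic spaces $\operatorname{Teich}^0_b / MCG^0(M) \cong \Omega / \Mon^2(X)$, and that the homeomorphism $\cP$ intertwines $R$ with $r$. So the equivalence of the second and third bullets is immediate: since $\cP \circ R = r \circ \cP$, the point $[X]$ is fixed by $R$ if and only if $\cP(X)$ is fixed by $r$. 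The only genuine content is therefore to relate the \emph{first} bullet, a purely bimeromorphic-geometric condition, to the \emph{second}, a fixed-point condition on the birational moduli space.

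First I would recall that two compact hyperk\"ahler manifolds lying in $\operatorname{Teich}^0$ are bimeromorphic if and only if they are identified in the Hausdorff reduction $\operatorname{Teich}^0_b$, equivalently if they have the same image in the quotient $\operatorname{Teich}^0_b / MCG^0(M)$; this is precisely the content of Verbitsky's theorem together with the fact that $\operatorname{Teich}^0_b / MCG^0(M)$ is the birational moduli space $\cM^0_b$ (inseparable points of $\operatorname{Teich}^0$ correspond to bimeromorphic manifolds). Granting this, the argument is short: by definition $\tilde R([X]) = [\bar X]$, so $[X]$ is fixed by $R$ in the quotient exactly when $[X]$ and $[\bar X]$ represent the same point of $\operatorname{Teich}^0_b / MCG^0(M)$, which by the previous sentence is equivalent to $X$ being bimeromorphic to $\bar X$. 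Here one uses that $\bar X$ indeed lies in $\operatorname{Teich}^0$, which was noted above via the existence of the twistor space.

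The step I expect to require the most care is making rigorous the passage between ``bimeromorphic'' and ``same point in the quotient of the Hausdorff reduction modulo $MCG^0(M)$''. One must check that identifying $\bar X$ with $X$ after applying some element of $MCG^0(M)$ corresponds, on the level of actual manifolds, to a bimeromorphism $X \dashrightarrow \bar X$ rather than merely an abstract lattice isometry; this is where the precise statement of the Global Torelli Theorem for the \emph{birational} moduli space is essential, and where I would cite \cite{VerbitskyTorelli} and \cite{Markman} for the translation between Hodge-theoretic periods, parallel transport operators, and bimeromorphic maps. Once this dictionary is in place the proposition reduces to the formal equivariance $\cP \circ R = r \circ \cP$ and the tautology $\tilde R([X]) = [\bar X]$, so the remaining verifications are routine.
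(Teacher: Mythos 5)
Your proposal is correct and follows essentially the same route as the paper, whose entire proof is the one-line observation that $X$ is bimeromorphic to $\bar X$ if and only if the corresponding points of $\operatorname{Teich}^0_b$ lie in the same $MCG^0(M)$-orbit, with the equivalence of the second and third bullets already built into the equivariance $\cP \circ R = r \circ \cP$ established just before the statement. You have merely made explicit the ingredients (Verbitsky's identification of inseparable points with bimeromorphic manifolds, the twistor-space argument placing $\bar X$ in $\operatorname{Teich}^0$) that the paper leaves implicit in its preceding discussion.
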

\begin{proof}
Just observe that $X$ is bimeromorphic to $\bar{X}$ if and only if the corresponding points in $\operatorname{Teich}^0_b$ are in the same $MCG^0(M)$-orbit.
\end{proof}


To give a more precise description of those hyperk\"ahler manifolds admitting an anti-holomorphic automorphism, we will make use of two other ingredients\,: the twistor space of a hyperk\"ahler manifold and Markman's Torelli Theorem for morphisms.

Let us firstly recall the construction of the twistor space. Let $X = (M, I)$ be a compact hyperk\"ahler manifold as before. Denoting by $g$ a hyperk\"ahler metric compatible with the complex structure, then there exist two other complex structures $J$ and $K$ such that $IJ = K$ and $g$ is K\"ahler with respect to both of them. It turns out that $g$ is K\"ahler with respect to all the complex structures of the form $aI + bJ + cK$ with $a, b, c\in \IR$ and $a^2 + b^2 + c^2 = 1$. The set of such complex structures is then naturally identified with $\IP^1$ and the manifold $M \times \IP^1$ is in a natural way a complex manifold (called the \emph{twistor space} of $X$) with the property that the projection to $\IP^1$ is holomorphic and the fibre over $(a, b, c) \in S^2 = \IP^1$ is the complex manifold $(M, aI + bJ + cK)$.

In \cite{Markman} Markman proved the following Torelli Theorem for maps, which characterizes the isometries arising from pull-back by isomorphisms.

\begin{theorem}[{\emph{cf.}~\cite[Theorem 1.3]{Markman}}]\label{thm:Markman}
Let $X$ and $Y$ be compact hyperk\"ahler manifolds which are deformation equivalent. Let $\varphi: H^2(Y, \IZ) \longrightarrow H^2(X, \IZ)$ be a parallel transport operator, which is an isomorphism of integral Hodge structures. There exists an isomorphism $f: X \longrightarrow Y$ such that $f^* = \varphi$ if and only if $\varphi$ maps a/any K\"ahler class on $Y$ to a K\"ahler class on $X$.
\end{theorem}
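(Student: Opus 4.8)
The statement to prove is Markman's Torelli theorem for maps, and I would establish the two implications separately.

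The \emph{only if} direction is immediate. If $f\colon X\to Y$ is a biholomorphism with $f^{*}=\varphi$, then $f^{*}$ is an isomorphism of integral Hodge structures and is a parallel transport operator (being induced by an isomorphism of deformation equivalent manifolds). Moreover, since the pull-back of a K\"ahler form under a biholomorphism is again K\"ahler, $\varphi=f^{*}$ carries the K\"ahler cone $\cK_{Y}$ isomorphically onto $\cK_{X}$; in particular it sends a, hence any, K\"ahler class to a K\"ahler class. Note this also explains the \emph{a/any} phrasing in the statement: once the conclusion gives some such $f$, its pull-back preserves all of $\cK_{Y}$.

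For the \emph{if} direction the plan is to first produce a bimeromorphic map and then upgrade it to a biholomorphism. Using $\varphi$ to identify $H^{2}(Y,\IZ)$ with $H^{2}(X,\IZ)$, the fact that $\varphi$ is a Hodge isometry means the periods $\IP(H^{2,0}(X))$ and $\IP(H^{2,0}(Y))$ become the \emph{same} point of the period domain $\Omega$. By Verbitsky's Global Torelli Theorem (Theorem~\ref{thm:VerbGTT}) the period map identifies $\operatorname{Teich}^{0}_{b}$ with $\Omega$, so equal periods force $X$ and $Y$ to determine the same point of $\operatorname{Teich}^{0}_{b}$, and by the characterization of bimeromorphicity used in Proposition~\ref{prop:BirToConj} such manifolds are bimeromorphic, say by $g\colon X\dashrightarrow Y$. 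The induced $g^{*}\colon H^{2}(Y,\IZ)\to H^{2}(X,\IZ)$ is again a parallel transport Hodge isometry, so $\psi:=\varphi\circ(g^{*})^{-1}$ is a monodromy operator of $X$ fixing $H^{2,0}(X)$, i.e.\ an element of $\Mon^{2}(X)$ acting as a Hodge isometry.

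The remaining, and genuinely hard, step is to exploit the hypothesis that $\varphi$ preserves the K\"ahler cone. Here I would appeal to the structure of the positive cone: inside a component $\cC_{X}$ of $\{q>0\}\subseteq H^{1,1}(X,\IR)$, the walls cut out by prime exceptional classes partition the \emph{fundamental exceptional chamber} into subchambers which are exactly the K\"ahler cones of the various bimeromorphic models of $X$, and a reflection group $W\subseteq\Mon^{2}(X)$ permutes these subchambers simply transitively while fixing the period. Now $g^{*}$ carries $\cK_{Y}$ onto one such subchamber, whereas $\varphi$ carries $\cK_{Y}$ onto $\cK_{X}$ itself; since the subchambers are permuted freely and $\cK_{X}$ is one of them, one may adjust the bimeromorphic model so that these two chambers coincide, and a bimeromorphic map of hyperk\"ahler manifolds preserving the K\"ahler cones contracts no divisor and is therefore an isomorphism. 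This yields the desired biholomorphism $f$ with $f^{*}=\varphi$. The main obstacle is precisely this last paragraph: establishing that $\cK_{X}$ is a single chamber of $\cC_{X}$ for the monodromy reflection group and that bimeromorphic maps correspond exactly to isometries permuting these chambers. This is where all the geometric content sits, resting on Huybrechts' description of the K\"ahler cone and the theory of monodromy reflections; by contrast, the reduction to a bimeromorphic map via Global Torelli is comparatively formal.
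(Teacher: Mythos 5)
A preliminary remark on the comparison you were asked for: the paper offers \emph{no} proof of Theorem \ref{thm:Markman} at all. It is imported verbatim from \cite[Theorem 1.3]{Markman} and used as a black box (notably in the proof of Theorem \ref{thm:TorelliAnti}), so there is no in-paper argument to measure your proposal against; what you have sketched is a proof of Markman's theorem itself. Your overall strategy is indeed the standard one, essentially Markman's own: use the Global Torelli Theorem (Theorem \ref{thm:VerbGTT}) and the fact that inseparable points of the Teichm\"uller space correspond to bimeromorphic manifolds to produce $g\colon X \dashrightarrow Y$, then analyze the Hodge-monodromy operator $\psi = \varphi \circ (g^*)^{-1} \in \Mon^2_{\Hdg}(X)$ through the wall-and-chamber structure of the positive cone. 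Your \emph{only if} direction is correct, and the reduction to $\psi$ is sound, granted Huybrechts' theorems that inseparable marked manifolds are bimeromorphic and that bimeromorphic maps induce parallel transport operators.

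The final chamber-theoretic step, however --- which you rightly flag as carrying all the content --- has genuine problems on two levels. First, the chamber picture is misstated: the walls orthogonal to prime exceptional classes do not subdivide the fundamental exceptional chamber, they \emph{bound} it; the reflection group $W_{\mathrm{Exc}}$ generated by the corresponding reflections acts simply transitively on the exceptional chambers of the positive cone, \emph{not} on the K\"ahler cones of birational models, which tile the fundamental exceptional chamber along walls dual to non-divisorial (MBM-type) classes and are not permuted simply transitively by any reflection group. Second, and more seriously, even with the correct picture your concluding inference does not close: matching chambers only yields a bimeromorphic $f\colon X \dashrightarrow Y$ with $f^*(\cK_Y) = \varphi(\cK_Y) = \cK_X$, hence an isomorphism $f$ (a bimeromorphic map of compact hyperk\"ahler manifolds sending a K\"ahler class to a K\"ahler class is biregular); but to get $f^* = \varphi$ \emph{on the nose} you must know that the Hodge-monodromy operator $\varphi \circ (f^*)^{-1}$, which preserves $\cK_X$, is itself induced by an automorphism of $X$ --- and that is exactly the case $X = Y$ of the theorem being proved, so the argument as written is circular at this point. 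The missing ingredient is the injectivity half of Markman's modular description of the fibre of the period map: the assignment sending a marked pair $(X', \eta')$ in the fibre to the K\"ahler-type chamber $\eta'(\cK_{X'})$ is a \emph{bijection}, and its injectivity --- resting on Huybrechts' cycle-theoretic theorem that two inseparable marked pairs whose composed isometry maps a K\"ahler class to a K\"ahler class are isomorphic \emph{as marked pairs} --- is precisely what upgrades equality of chambers to the equality $f^* = \varphi$. Your sketch identifies the right territory but substitutes an incorrect group-action statement for this separation property, which is where the actual work lies.
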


We propose the following analogue of Markman's Torelli Theorem \ref{thm:Markman} for anti-holomorphic isomorphisms.

\begin{theorem}\label{thm:TorelliAnti}
Let $X$ and $Y$ be two deformation equivalent compact hyperk\"ahler manifolds, and let $\varphi: H^2(Y, \IZ) \longrightarrow H^2(X, \IZ)$ be an isomorphism. There exists an anti-holomorphic isomorphism $g: X \longrightarrow Y$ such that $g^* = \varphi$ if and only if $\varphi$ satisfies the following conditions\,:
\begin{enumerate}
\item it is a parallel transport operator,
\item it is an isometry for the Beauville--Bogomolov--Fujiki quadratic forms,
\item it is an anti-morphism of Hodge structures, that is, $\varphi\left(H^{2,0}(Y)\right)=H^{0,2}(X)$,
\item $\varphi(\cK_Y) \cap (-\cK_X) \neq \varnothing$.
\end{enumerate}
\end{theorem}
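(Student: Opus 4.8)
The plan is to reduce the anti-holomorphic statement to Markman's holomorphic Torelli Theorem \ref{thm:Markman} by post-composing with the canonical anti-holomorphic diffeomorphism $\conjug: \overline{Y} \to Y$ of Definition~\ref{def:Conj}. The key observation is that an anti-holomorphic isomorphism $g: X \to Y$ is the same datum as a \emph{holomorphic} isomorphism $h := \conjug^{-1} \circ g: X \to \overline{Y}$, since $\conjug$ is anti-holomorphic and the composition of two anti-holomorphic maps is holomorphic. Thus I would set up a dictionary translating each of the four conditions on $\varphi$ into the corresponding condition on the induced isometry $\psi := h^*: H^2(\overline{Y}, \IZ) \to H^2(X, \IZ)$, and then invoke Theorem~\ref{thm:Markman} applied to the pair $(X, \overline{Y})$.

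First I would record how cohomology transports under $\conjug$. As a diffeomorphism $\conjug: \overline{Y} \to Y$ is the identity on the underlying manifold $M$, so $\conjug^*: H^2(Y, \IZ) \to H^2(\overline{Y}, \IZ)$ is the identity on $H^2(M, \IZ)$; however, because $\overline{Y} = (M, -I)$, it \emph{exchanges} the Hodge pieces, sending $H^{2,0}(Y)$ to $H^{2,0}(\overline{Y}) = H^{0,2}(Y)$ and vice versa, and it sends the K\"ahler cone $\cK_Y$ to $-\cK_{\overline{Y}}$ (the sign coming from the reversal of the complex structure, exactly as in the dagger operation of \S\ref{subsect:Dagger} and Remark~\ref{rem:invarient line bundle}). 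With $g^* = \varphi$ and $h = \conjug^{-1} g$ we have $\psi = h^* = \varphi \circ (\conjug^{-1})^* = \varphi \circ (\conjug^*)^{-1}$ on the level of the underlying lattice. The heart of the argument is then the translation: condition (2) says $\varphi$ is an isometry, and since $\conjug^*$ is an isometry (orientation is preserved as $\dim_\IR M$ is divisible by $4$), so is $\psi$. Condition (3), that $\varphi$ is an anti-morphism of Hodge structures, becomes the statement that $\psi$ is an \emph{ordinary} morphism of Hodge structures $H^{2,0}(\overline Y) \to H^{2,0}(X)$, precisely because $\conjug^*$ already performs the Hodge-piece swap. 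Condition (1), that $\varphi$ is a parallel transport operator, combined with the fact that $\overline{Y}$ is deformation equivalent to $Y$ through the twistor family (so that $\conjug^*$ is itself realized by parallel transport, using $\overline X, \overline Y \in \operatorname{Teich}^0$ as recalled before Theorem~\ref{thm:VerbGTT}), shows $\psi$ is a parallel transport operator from $\overline Y$ to $X$. Finally condition (4), $\varphi(\cK_Y) \cap (-\cK_X) \neq \varnothing$, translates via the sign conventions above into $\psi(\cK_{\overline Y}) \cap \cK_X \neq \varnothing$, i.e.\ $\psi$ sends some K\"ahler class to a K\"ahler class, which is exactly the hypothesis needed in Theorem~\ref{thm:Markman}.

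Granting this dictionary, Theorem~\ref{thm:Markman} produces a \emph{holomorphic} isomorphism $h: X \to \overline{Y}$ with $h^* = \psi$, and then $g := \conjug \circ h: X \to Y$ is the desired anti-holomorphic isomorphism with $g^* = \conjug^* \circ \psi = \varphi$; the converse direction (that any anti-holomorphic $g$ with $g^* = \varphi$ forces conditions (1)--(4)) is obtained by running the same translations backwards and using Remark~\ref{rem:Isometry} together with Corollary~\ref{cor: f-hstar preserves ampleness} for the positivity statement (4). The main obstacle I anticipate is getting the signs and the direction of the Hodge-piece exchange exactly right in condition (4): one must verify carefully that the dagger-type sign $-I$ acting on $\overline Y$ sends $\cK_Y$ to $-\cK_{\overline Y}$ rather than to $\cK_{\overline Y}$, so that the $-\cK_X$ appearing in (4) is genuinely converted into $+\cK_X$ and matches Markman's K\"ahler hypothesis. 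The deformation-equivalence bookkeeping in condition (1)---checking that $\conjug$ between $Y$ and $\overline Y$ is realized by a genuine parallel transport operator and not merely an abstract isometry---is the other point requiring care, and this is where the twistor-space construction recalled above is essential.
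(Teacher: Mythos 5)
Your proposal is correct and takes essentially the same route as the paper's proof: both reduce to Markman's Theorem~\ref{thm:Markman} by composing with the anti-holomorphic `identity' $\conjug$ and using the twistor family to realize $\conjug^*$ as a parallel transport operator, the only difference being that you conjugate on the $Y$ side (producing $h\colon X \to \overline{Y}$, $g = \conjug \circ h$) while the paper conjugates on the $X$ side (setting $\psi = \conjug^* \circ \varphi\colon H^2(Y,\IZ) \to H^2(\bar{X},\IZ)$, obtaining $f\colon \bar{X} \to Y$ and $g = f \circ \conjug^{-1}$). One cosmetic slip worth fixing: since $\conjug^*$ is the identity on $H^2(M,\IC)$, it sends $H^{2,0}(Y)$ to $H^{0,2}(\overline{Y})$ (these are the \emph{same} subspace of $H^2(M,\IC)$), not to $H^{2,0}(\overline{Y}) = H^{0,2}(Y)$ as you wrote --- so $\conjug^*$ is itself an anti-morphism of Hodge structures, as in the paper's list of properties --- but your downstream conclusion that $\psi = \varphi \circ (\conjug^*)^{-1}$ is an ordinary Hodge morphism is nevertheless correct (one uses that the integral $\varphi$ commutes with complex conjugation, so condition (3) gives $\varphi(H^{0,2}(Y)) = H^{2,0}(X)$), and the rest of your dictionary, including the sign bookkeeping in condition (4), goes through unchanged.
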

\begin{proof}
Consider the `identity' map between $X$ and $\bar X$\,:
\[\begin{array}{rccc}
\conjug: & \bar{X} = (M, -I) & \longrightarrow & X = (M, I)\\
 & x \in M & \longmapsto & x \in M.
\end{array}\]
The map $\conjug^*: H^2(X, \IZ) \longrightarrow H^2(\bar{X}, \IZ)$ enjoys then the following properties.
\begin{enumerate}
\item It is a parallel transport operator, as it coincides with the parallel transport in the twistor space induced by any path from $-I$ to $I$. The reason is that, as we mentioned, this family is differentiably trivial and $\IP^1$ is simply connected.
\item It is an isometry for the Beauville--Bogomolov--Fujiki form, for the reason that this form is invariant under the exchange of $\eta$ and $\bar{\eta}$.
\item It is an anti-morphism of Hodge structures.
\item $\conjug^*(\cK_X) = -\cK_{\bar{X}}$.
\end{enumerate}

Assume that we are given $\varphi$ with the properties in the statement. First of all, observe that the existence of the twistor space implies that $X$ and $\bar{X}$ are deformation equivalent. Then the parallel transport operator
\[\psi = {\conjug^*}\circ \varphi : H^2(Y, \IZ) \longrightarrow H^2(\bar{X}, \IZ)\]
is a Hodge isometry, so by Theorem \ref{thm:Markman} there exists a holomorphic isomorphism $f: \bar{X} \longrightarrow Y$ such that $f^* = \psi$. This means that $g:=  f \circ\conjug^{-1}: X\to Y$ is an anti-holomorphic isomorphism, such that $g^* =  {\conjug^*}^{-1}  \circ f^*= {\conjug^*}^{-1}  \circ\psi = \varphi$.

The other implication is done in a similar way.
\end{proof}

\begin{rem}
Theorem \ref{thm:TorelliAnti} allows us to reduce the problem of existence of anti-holomorphic automorphisms of a compact hyperk\"ahler manifold $X$ to the problem of existence of anti-Hodge monodromy transformations $\varphi$ on $H^2(X, \IZ)$ anti-preserving K\"ahler classes, which remains challenging even for K3 surfaces. 
\end{rem}

As a consequence, if we define
\[\Mon^2_{\Hdg}(X) = \set{\varphi \in \Mon^2(X) \st \varphi \text{ preserves the Hodge structure of } H^2(X, \IZ)}\]
and
\[\Mon^2_{\KHdg}(X) = \set{\varphi \in O(H^2(X, \IZ)) \st \begin{array}{l}
\varphi \in \Mon^2_{\Hdg}(X) \text{ or } -\varphi \in \Mon^2(X)\\
\text{and } \varphi(H^{2, 0}(X)) = H^{0, 2}(X)
\end{array}}\]
then we can merge Theorem \ref{thm:Markman} and \ref{thm:TorelliAnti} together to have a full characterisation of operators of the form $g^\dagger$.

\begin{cor}\label{cor: torelli for isos}
Let $\varphi \in \Mon^2_{\KHdg}(X)$. Then $\varphi = f^\dagger$ for some $f \in \KAut(X)$ if and only if $\varphi$ sends some K\"ahler class to a K\"ahler class.
\end{cor}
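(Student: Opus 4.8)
The plan is to dichotomize according to the two defining alternatives for membership in $\Mon^2_{\KHdg}(X)$, treating holomorphic and anti-holomorphic candidates separately, and to invoke Markman's Torelli Theorem~\ref{thm:Markman} in the first case and its anti-holomorphic analogue Theorem~\ref{thm:TorelliAnti} in the second. First I would record that the two alternatives are mutually exclusive: if $\varphi \in \Mon^2_{\Hdg}(X)$ then $\varphi(H^{2,0}(X)) = H^{2,0}(X)$, which rules out $\varphi(H^{2,0}(X)) = H^{0,2}(X)$. Thus exactly one of the following holds: either $\varphi \in \Mon^2_{\Hdg}(X)$, or else $-\varphi \in \Mon^2(X)$ together with $\varphi(H^{2,0}(X)) = H^{0,2}(X)$.

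For the forward implication, suppose $\varphi = f^\dagger = \epsilon(f) f^*$ for some $f \in \KAut(X)$. I would argue that $f^\dagger$ maps the K\"ahler cone $\cK_X$ onto itself: when $f$ is holomorphic this is immediate since $f^*$ pulls back K\"ahler classes to K\"ahler classes, and when $f$ is anti-holomorphic it follows from the observation in Remark~\ref{rem:invarient line bundle} that $-f^*(\omega)$ is K\"ahler whenever $\omega$ is (the same computation being valid for any anti-holomorphic diffeomorphism, not only a real structure). Consequently $\varphi$ sends every, hence in particular some, K\"ahler class to a K\"ahler class.

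For the converse, assume $\varphi \in \Mon^2_{\KHdg}(X)$ carries some K\"ahler class to a K\"ahler class, i.e. $\varphi(\cK_X) \cap \cK_X \neq \varnothing$. In the first case ($\varphi \in \Mon^2_{\Hdg}(X)$), $\varphi$ is a parallel transport operator from $X$ to itself, an isometry, and preserves the Hodge structure, so Markman's Torelli Theorem~\ref{thm:Markman} applied with $Y = X$ produces an automorphism $f$ with $f^* = \varphi$, giving $f^\dagger = f^* = \varphi$. In the second case I would set $\psi := -\varphi$ and verify the four hypotheses of Theorem~\ref{thm:TorelliAnti} with $Y = X$: $(1)$ $\psi \in \Mon^2(X)$ is a parallel transport operator; $(2)$ $\psi$ is a BBF isometry, since $\varphi$ is and $-\id$ is an isometry; $(3)$ $\psi(H^{2,0}(X)) = -\varphi(H^{2,0}(X)) = H^{0,2}(X)$, so $\psi$ anti-preserves the Hodge structure; and $(4)$ $\psi(\cK_X) \cap (-\cK_X) \neq \varnothing$, which holds precisely because $\varphi(\cK_X) \cap \cK_X \neq \varnothing$. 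Theorem~\ref{thm:TorelliAnti} then yields an anti-holomorphic automorphism $f$ with $f^* = \psi = -\varphi$, so that $f^\dagger = -f^* = \varphi$, as desired.

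Once the two Torelli theorems are granted, the argument is essentially bookkeeping; the step I expect to require the most care is the consistent propagation of the sign $\epsilon(f)$ through the K\"ahler-cone condition\,: concretely, matching the hypothesis ``$\varphi$ sends some K\"ahler class to a K\"ahler class'' with condition $(4)$ of Theorem~\ref{thm:TorelliAnti}, namely $\psi(\cK_X) \cap (-\cK_X) \neq \varnothing$, for the correct choice $\psi = -\varphi$. Getting this sign wrong would misidentify which of the two Torelli theorems applies and would break the equivalence, so I would double-check it against the parallel data $\conjug^*(\cK_X) = -\cK_{\bar X}$ used in the proof of Theorem~\ref{thm:TorelliAnti}.
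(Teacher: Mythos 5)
Your proposal is correct and follows essentially the same route as the paper: the corollary is obtained by merging Markman's Theorem~\ref{thm:Markman} (holomorphic case) with Theorem~\ref{thm:TorelliAnti} applied to $\psi=-\varphi$ (anti-holomorphic case), with the sign in condition $(4)$, namely $-\varphi(\cK_X)\cap(-\cK_X)\neq\varnothing \Leftrightarrow \varphi(\cK_X)\cap\cK_X\neq\varnothing$, handled exactly as in the paper's one-line argument ``$\varphi=g^\dagger=-g^*$ iff $g^*=-\varphi$''. Your write-up is in fact somewhat more explicit than the paper's, which leaves the holomorphic branch and the mutual exclusivity of the two alternatives implicit.
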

\begin{proof}
Indeed we have $\varphi = g^\dagger = -g^*$ if and only if $g^* = -\varphi$, which by Theorem \ref{thm:TorelliAnti} is equivalent to $\varphi \in \Mon^2_{\KHdg}(X)$ and $\varphi$ sends some K\"ahler class to a K\"ahler class.
\end{proof}

\subsection{Examples of real structures on hyperk\"ahler manifolds}\label{subsect:Examples}

We provide in this subsection some natural constructions of real structures on compact hyperk\"ahler manifolds. 

\subsubsection{Hilbert schemes of K3 surfaces}
Let $S$ be a $K3$ surface equipped with a real structure $\sigma$. We show that for any $n\in \IN$, $\sigma$ induces a natural real structure on $S^{[n]}$, the $n$-th Hilbert scheme (or rather Douady space) of $S$. To this end, the easiest way is to use our Torelli Theorem \ref{thm:TorelliAnti} for anti-holomorphic automorphisms.

By \cite[Proposition 6]{Beauville}, for any $n\geq 2$, there is a Hodge isometry 
\[H^2(S^{[n]}, \IZ) \simeq H^2(S, \IZ) \oplus \IZ \cdot \delta,\]
where $\delta$ is half of the class of the exceptional divisor, hence $(\delta, \delta) = -2(n - 1)$ and $H^{2}(S, \IZ)$ is mapped injectively into $H^{2}(S^{[n]}, \IZ)$ by sending $\alpha$ to the pull-back, via the Hilbert--Chow morphism, of the descent on $S^{(n)}$ of the $\mathfrak{S}_{n}$-invariant class $\alpha^{\times n}$ on $S^{n}$.

Consider the automorphism
\[\varphi = \sigma^* \oplus (-\id): H^2(S, \IZ) \oplus \IZ \cdot \delta \longrightarrow H^2(S, \IZ) \oplus \IZ \cdot \delta.\]
As $\sigma$ is a real structure, $\varphi$ is clearly an isometry, involution and an anti-morphism of Hodge structures (\emph{cf.}~Theorem \ref{thm:TorelliAnti}).  To apply Theorem \ref{thm:TorelliAnti}, let us consider the action of $\varphi$ on the K\"ahler classes. By Remark \ref{rem:invarient line bundle}, there exists a K\"ahler class $\omega\in H^{1,1}(S, \IR)$ such that $\sigma^{*}\omega=-\omega$. The image of $\omega$ in $H^{2}(S^{[n]}, \IR)$ is on the boundary of the K\"ahler cone (\emph{i.e.}~semi-positive), however for a sufficiently small $\varepsilon>0$, $\omega-\varepsilon\delta\in H^{1,1}(S^{[n]}, \IR)$ is indeed K\"ahler and is moreover $\varphi$-anti-invariant. 
Finally, $\varphi$ is orientation-preserving (in the sense of \cite[\S 4]{Markman}) by construction\,; and since it acts on the discriminant lattice of $H^2(S^{[n]}, \IZ)$ as $-\id$, we see that $\varphi$ is a monodromy operator by \cite[Lemma 9.2]{Markman}.

With all the hypotheses of Theorem \ref{thm:TorelliAnti} being fulfilled, it implies that $\varphi = {\sigma_{n}}^*$ for some anti-holomorphic automorphism $\sigma_{n}$ of $S^{[n]}$. Since the only (holomorphic) automorphism of $S^{[n]}$ acting trivially on $H^2(S^{[n]}, \IZ)$ is the identity (\cite[Proposition 10]{Beauville}), we conclude that $\sigma_{n}^2 = \id$, that is, $\sigma_{n}$ is a real structure on $S^{[n]}$.

The geometric description of $\sigma_{n}$ is as expected\,: for any length-$n$ closed analytic subscheme $i:Z\hookrightarrow X$, consider the base-change by the conjugate automorphism of the base field $\IC$:
\begin{equation*}
\xymatrix{
&\bar Z \ar@{^{(}->}[dl]_{\sigma\circ i'}\cart\ar[r]^{\conjug}\ar@{^{(}->}[d]^{i'}& Z\ar@{^{(}->}[d]^{i}\\
S \ar[dr]_{f}&\bar S\ar[d]^{f'}\cart\ar[l]^{\sigma} \ar[d] \ar[r]^{\conjug}& S\ar[d]^{f}\\
&\Spec(\IC) \ar[r]^{\conjug} &\Spec(\IC)
}
\end{equation*}
then define the image of $Z$ by $\sigma_{n}$ to be the length-$n$ closed subscheme $\sigma\circ i': \bar{Z}\hookrightarrow S$. One can check the anti-holomorphicity by looking at the induced morphism on tangent spaces. We leave the details to the reader. Note that this construction generalizes to any complex surfaces.

\subsubsection{Moduli spaces of stable sheaves on K3 surfaces}
An important source of examples of hyperk\"ahler manifolds is provided by the moduli spaces of stable sheaves on K3 surfaces, generalizing Hilbert schemes discussed before. As the K\"ahlerness of the moduli spaces of stable sheaves on non-projective K3 surfaces has not been completely settled yet (\emph{cf.}~\cite{MR3621782}, \cite{Perego17}), we restrict ourselves to the algebraic setting. 

Let $S$ be a projective K3 surface. Let $\widetilde H(S, \IZ)$ be the \emph{Mukai lattice}, that is, the free abelian group $H^{*}(S, \IZ)$ endowed with the \emph{Mukai pairing} given by $(v_{0}, v_{1}, v_{2})\cdot (v_{0}', v_{1}', v_{2}')=v_{1}\cdot v_{1}'-v_{0}v_{2}'-v_{0}'v_{2}$, where $v_{i}, v_{i}'\in H^{2i}(S,\IZ)$. Thanks to the works \cite{Beauville}, \cite{Mukai84}, \cite{MR1395935}, \cite{OGrady97} and \cite{HuyInventiones} \emph{etc}, given a Mukai vector $v=(v_{0}, v_{1}, v_{2})\in \widetilde H(S, \IZ)$ with $v_{1}\in \NS(S)$ primitive and $v_{0}>0$, and a $v$-generic ample line bundle $H$, the moduli space of $H$-semistable sheaves on $S$ with Mukai vector $v$, denoted by $M:=M_{H}(S, v)$, is a projective hyperk\"ahler manifold of dimension $2n:=(v, v)+2$, deformation equivalent to the $n$-th Hilbert scheme of $S$\,; and all sheaves parametrized by $M$ are stable. The objective of this subsection is to show that a real structure on $S$ gives rise to a canonical real structure on $M$, under natural compatibility conditions. 

More precisely, let $\sigma$ be a real structure on $S$ such that $\sigma^{*}(v)=v^{\vee}:=(v_{0}, -v_{1}, v_{2})$ and $\sigma^{*}(c_{1}(H))=-c_{1}(H)$. We claim that $$\sigma^{\hstar}:M\to M$$ is a real structure (see Remark \ref{rem:Variants} for the holomorphic pull-back of a coherent sheaf). The first assumption on Mukai vectors, which says nothing else but $\sigma^{*}(v_{1})=-v_{1}$, implies that for any sheaf $E$ on $S$ with $v(E)=v$, we have $$v\left(\sigma^{\hstar}(E)\right)=\ch\left(\overline{\sigma^{*}E}\right)\cdot \sqrt{\td(S)}=\sigma^{*}\left(v(E)\right)^{\vee}=v(E);$$ while the second assumption on polarization, which says that $\sigma^{\hstar}(H)\simeq H$, implies that the stability condition is preserved, as the slope of a torsion-free sheaf $E$ satisfies $$\mu(\sigma^{\hstar} E)=\frac{c_{1}(\overline{\sigma^{*}E})\cdot c_{1}(H)}{\rk E}=\frac{\sigma^{*}\left(-c_{1}(E)\right)\cdot \left(-\sigma^{*}c_{1}(H)\right)}{\rk E}=\mu(E).$$
In other words, the category of sheaves parametrized by $M$ are preserved, hence $\sigma^{\hstar}$ is an involution on $M$. 

To see that $\sigma^{\hstar}$ is anti-holomorphic, we go back to the GIT construction of $M$ (\emph{cf.}~\cite{HuybrechtsLehn}). Denote $P(t):=v_{0}+v_{2}+t(v_{1}\cdot H)+\frac{t^{2}}{2}v_{0}(H^{2})$ the Hilbert polynomial determined by the Mukai vector $v$. By the boundedness of sheaves of fixed Mukai vector, there exists an integer $m$ such that all sheaves parametrized by $M$ are $m$-regular. Let $V$ be a fixed complex vector space of dimension $P(m)$, the dimension of $H^{0}(S, E\otimes H^{m})$ for any $E$ parametrized by $M$. Let $R$ be the stable locus of the Quot-scheme $\Quot(V\otimes H^{-m}, P)$, upon which $\PGL(V)$ naturally acts, then $M$ is the geometric quotient of $R$ by $\PGL(V)$. Now $\sigma$ induces the following anti-holomorphic involution on the Quot-scheme, denoted by $\tilde\sigma$. Choose an isomorphism $f: H \xrightarrow{\simeq} \sigma^{\hstar}H$ such that the composition $H\xrightarrow{f}\sigma^{\hstar}H\xrightarrow{\sigma^{\hstar}(f)}(\sigma^{\hstar})^{2}(H)=H$ is the identity; this is always achievable by modifying $f$ by a scalar. We define the image by $\tilde\sigma$ of a quotient $[q: V\otimes H^{-m}\twoheadrightarrow E]$ to be the quotient $[q': V\otimes H^{-m}\twoheadrightarrow \sigma^{\hstar}(E)]$ given as the following composition
$$\xymatrix{
 q': V\otimes H^{-m}\ar[rr]^{\id_{V}\otimes f^{-m}}_{\simeq} &&V\otimes \sigma^{\hstar}H^{-m}\ar@{->>}[rr]^{\sigma^{\hstar}(q)}&& \sigma^{\hstar}(E).
}$$
By the hypothesis on $f$, $\tilde\sigma$ is an involution on the Quot-scheme, which is anti-holomorphic by construction. As is explained before, the subscheme $R$ is preserved by $\tilde\sigma$. Moreover, it is clear that the action commutes with the natural action of $\PGL(V)$. Therefore, $\tilde\sigma$ descends to a real structure $\sigma_{M}$ on the GIT quotient $R/\PGL(V)=M$, which maps $[E]$ to $[\sigma^{\hstar}(E)]$ as promised.

\subsubsection{Moduli spaces of stable sheaves on abelian surfaces}
Similarly to the previous two examples using K3 surfaces, one can start instead with abelian surfaces (or more generally two-dimensional complex tori). Let $A$ be an abelian surface, $v$ a primitive Mukai vector with $v_{0}>0$ and $v^{2}\geq 6$, and $H$ a $v$-generic polarization, then by the works \cite{Beauville}, \cite{Mukai84} and \cite{MR1872531} \emph{etc}, the Albanese fibers of $M_{H}(A, v)$, denoted by $K_{H}(A, v)$, is a projective hyperk\"ahler variety of dimension $2n=v^{2}-2$, deformation equivalent to generalized Kummer varieties. Now suppose that $\sigma$ is a real structure on $A$ such that it respects the group structure and anti-preserves $v_{1}$ and $H$. Then the same argument in the case of K3 surface applies and shows that $M_{H}(A, v)$ has a natural real structure, which leaves invariant the (isotrivial) Albanese fibration, hence induces a natural real structure on $K_{H}(A, v)$.

\subsubsection{Beauville--Donagi and Debarre--Voisin hyperk\"ahler fourfolds}
We will start by some general results. Let $V$ be an $n$-dimensional complex vector space endowed with a real structure\,: $V=V_{0}\otimes_{\IR}\IC$.
This real structure naturally induces real structures on tensor functors like $V^*$, $V^{\otimes m}$, $\bigwedge^h V$, $\Sym^t V$, as well as on homogeneous varieties (projective spaces, flag varieties \emph{etc.}) constructed from them.

Let $k\in \IN$, $\Grass(k,V)$ be the Grassmannian variety and $\cS$ the tautological sub-bundle on it. For any partition $\lambda$ of length at most $k$, the corresponding Schur functor gives rise to a homogenous bundle $\mathbb{S}_{\lambda}\mathcal{S}^{*}$. Note that they possess natural real structures since they admit natural real forms, namely the $\IR$-scheme $\Grass(k, V_{0})$, the tautological sub-bundle $\mathcal{S}_{0}$ on it and $\mathbb{S}_{\lambda}\mathcal{S}^{*}_{0}$ respectively; we are in the setting of Proposition \ref{prop:InducedRealStructure} and actually a better one\,: $\sigma$ and $\varphi$ are compatible (Remark \ref{rem:PhiRealStructure}).

Since the Bott isomorphism 
\begin{equation}\label{eqn:Bott}
 H^{0}\left(\Grass(k, V), \mathbb{S}_{\lambda}\mathcal{S}^{*}\right)\cong \mathbb{S}_{\lambda}V^{*}
\end{equation}
is clearly compatible with the induced real structures on both sides, for any real element of $\mathbb{S}_{\lambda}V^{*}$, we obtain a section of the homogenous bundle $\mathbb{S}_{\lambda}\mathcal{S}^{*}$ that is invariant under the real structure, hence by Proposition \ref{prop:InducedRealStructure}, its zero locus inherits a real structure from that of $\Grass(k, V)$.

We now provide two examples where this construction yields a real structure on hyperk\"ahler manifolds. 
\begin{itemize}
\item (Beauville--Donagi \cite{BeauvilleDonagi}.) $n=6, k=2, \lambda=(3)$. Let $V$ be a $6$-dimensional complex vector space and $f \in \Sym^3 V^*$ defines a smooth cubic fourfold $X \subseteq \IP(V)$. The zero locus of the corresponding section via \eqref{eqn:Bott} $$s_f \in H^0(\Grass(2, V), \Sym^3 \cS^*)$$ is then the Fano variety of lines contained in $X$, denoted by $F(X)$, which is a hyperk\"ahler fourfold, deformation equivalent to the Hilbert square of a $K3$ surface \cite{BeauvilleDonagi}. Once we endow $V$ with a real structure and choose $f$ to be a real form, we have a natural real structure on $F(X)$.
\item (Debarre--Voisin \cite{DebarreVoisin}.) $n=10, k=6, \lambda=(1,1,1)$. Let $V$ be a $10$-dimensional complex vector space and $f \in \bigwedge^3 V^*$ a \emph{generic} cubic form. It is shown in \cite{DebarreVoisin} that the zero locus of the corresponding section via \eqref{eqn:Bott} $$s_f \in H^0(\Grass(6, V), \bigwedge^3 \cS^*)$$ is a hyperk\"ahler fourfold, deformation equivalent to the Hilbert square of a $K3$ surface. As soon as $V$ is equipped with a real structure and $f$ is chosen real (it is always possible even with the genericity condition on $f$\,: the real locus of $\bigwedge^{3}V^{*}$ is Zariski dense), we get a natural real structure on the hyperk\"ahler fourfold. 
\end{itemize}

In the same spirit, for polarized K3 surfaces with small degree where a Mukai model is available, one can construct from a real structure on the homogenous data a canonically associated real structure on the K3 surface. We leave the details to the reader.

\section{Cone conjecture for hyperk\"ahler varieties: an extension}

Given any projective complex manifold $X$ we can consider the natural action of the automorphism group $\Aut(X)$ on the rational closure (see Definition \ref{def:RationalClosure}) of the ample cone $\cA(X)$. The \emph{Morrison--Kawamata cone conjecture} predicts that this action admits a fundamental domain which is a (convex) rationally polyhedral cone when the canonical bundle of $X$ is numerically trivial. See \cite[Cone conjecture]{MR1360514}, \cite{MR1468356} for the original source,  \cite{Sterk}, \cite{MR771979}, \cite{MR2682184}, \cite{MR2931877} for the surface case, \cite{Markman} for the movable cone analogue for hyperk\"ahler varieties, \cite{MR2987307} for abelian varieties and \cite{VOPsurvey} for a modern survey as well as results for Calabi-Yau varieties. The cone conjecture, as well as its K\"ahler version, has recently been proved in the case of hyperk\"ahler manifolds by Amerik--Verbitsky \cite[Theorem 5.6]{AmerikVerbitsky}, \cite[Remark 1.5]{AV-IMRN} (see also \cite{MarkmanYoshioka} for the cases of $K3^{[n]}$ and Kummer deformation type). The aim of this section is to prove an extended version of the cone conjecture on the ample cone of a projective hyperk\"ahler manifold with respect to the natural action of the Klein automorphism group $\KAut(X)$ given by the $\dagger$ operation defined in \S \ref{subsect:Dagger}.

Apart from Amerik--Verbitsky's work, we will need some general results in convex geometry which are collected in \S\ref{sect: convex geometry}.

\subsection{Convex geometry and actions on cones}\label{sect: convex geometry}

Let $V$ be a finite-dimensional real vector space with a $\IQ$-structure. 

\begin{definition}[\emph{cf.}~{\cite[Definition 2.1]{Looijenga}}]
Let $\Pi$ be a subset of $V$.
\begin{enumerate}
\item \textbf{(Polyhedra, rational polyhedra).} We say that $\Pi$ is a \emph{polyhedron} if it can be defined as the intersection of finitely many closed half-spaces in $V$. If, with respect to the given $\IQ$-structure, those half-spaces are definable over $\IQ$, then we say that $\Pi$ is a \emph{rational polyhedron}.
\item \textbf{(Faces).} Let $\Pi$ be a polyhedron in $V$ given as the intersection of a finite number of closed half-spaces, whose boundaries give a finite collection of hyperplanes $H_1, \ldots, H_m$. A \emph{face} of $\Pi$ is a subset of the following form 
\[F = \Pi \cap \bigcap_{j \in J} H_j, \qquad \text{with } J \subseteq \{ 1, \ldots, m \}.\]
A polyhedron hence has only finitely many faces. A $1$-dimensional face of a polyhedral cone is called a \emph{ray}. 
Be aware that this convention is different from \cite{AmerikVerbitsky}\,: a \emph{face} here is not necessarily of codimension $1$ \footnote{A codimension-one face would be called a \emph{facet}.}.
\end{enumerate}

\end{definition}

\begin{definition}[Rational closure]\label{def:RationalClosure}
Let $C$ be a non-degenerate open convex cone in a finite dimensional real vector space $V$ and fix a $\IQ$-structure on $V$. We define $C^+$ as the convex hull of $\overline{C} \cap V(\IQ)$. Then $C^+$ is again a convex cone, with $C \subseteq C^+ \subseteq \overline{C}$.
\end{definition}

We are interested in the actions of subgroups $\Gamma$ of $\GL(V)$ which stabilize $C$, hence act on $C$. In particular, we seek for a \emph{fundamental domain} for the action of $\Gamma$ on $C$, that is, a closed subset $D$ with non-empty interior $\mathrm{int}(D)$, such that $\bigcup \{ \gamma \cdot D \,|\, \gamma \in \Gamma \}=C$ and the sets $\{ \gamma \cdot  \mathrm{int}(D) \,|\, \gamma \in \Gamma \}$ are mutually disjoint.


The following finiteness property is used in the proof of Theorem \ref{main2'}.

\begin{prop}[Siegel property, {\cite[Theorem 3.8]{Looijenga}}]\label{prop: Siegel}
Let $C$ be a non-de\-gen\-er\-ate open convex cone in a finite dimensional real vector space $V$ equipped with a fixed $\IQ$-structure. Let $\Gamma$ be a subgroup of $\GL(V)$ which stabilizes $C$ and a lattice in $V(\IQ)$. Then $\Gamma$ has the \emph{Siegel property} in $C^+$\,: if $\Pi_1$ and $\Pi_2$ are polyhedral cones in $C^+$, then the collection $\{ (\gamma \cdot \Pi_1) \cap \Pi_2 \,|\, \gamma \in \Gamma \}$ is finite.
\end{prop}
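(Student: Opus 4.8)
The plan is to reduce the statement to a finiteness assertion about \emph{rays}, then feed in proper discontinuity of $\Gamma$ on the interior of the cone, treating the boundary by an induction on dimension. First I would record that, since $C$ is non-degenerate, $\overline{C}$ contains no line, so every cone $(\gamma\cdot\Pi_1)\cap\Pi_2\subseteq C^+\subseteq\overline C$ is pointed and is therefore the conical hull of its finitely many extremal rays. Consequently it suffices to prove that the set of rays occurring as an extremal ray of $(\gamma\cdot\Pi_1)\cap\Pi_2$ for some $\gamma\in\Gamma$ is finite: if this set of rays is finite, each intersection is the conical hull of one of its finitely many subsets, so only finitely many distinct cones can arise. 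To projectivize, I fix a rational functional $\xi_0$ in the interior of the dual cone $C^*\subseteq V^*$; then the slice $S=\set{x\in\overline C \st \langle\xi_0,x\rangle=1}$ is compact and meets each ray of $\overline C$ exactly once, so tracking rays amounts to tracking points of the compact polytopes $\Pi_i\cap S$.

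The key input is that $\Gamma$ acts properly discontinuously on the interior $C$. I would obtain this from Vinberg's characteristic function $\phi_C(x)=\int_{C^*}e^{-\langle\xi,x\rangle}\,d\xi$, which converges precisely because $C$ is non-degenerate. A change of variables gives $\phi_C(\gamma x)=|\det\gamma|^{-1}\phi_C(x)$; since $\Gamma$ stabilizes a lattice we have $|\det\gamma|=1$ for all $\gamma\in\Gamma$, so $\phi_C$ is genuinely $\Gamma$-invariant. Its Hessian (more precisely that of $\log\phi_C$) is then a $\Gamma$-invariant complete Riemannian metric on $C$, and discreteness of $\Gamma$ in $\GL(V)$ forces the action by these isometries to be properly discontinuous. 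From this, any $\Gamma$-orbit is discrete and closed in $C$ and meets every compact subset of $C$ in a finite set. This already disposes of all extremal rays whose directions remain in a fixed compact subset of $C$, in particular of the rays $\gamma\cdot\rho$ (with $\rho$ an extremal ray of $\Pi_1$) and the face-intersection rays, as long as they stay bounded away from $\partial C$ in the slice $S$.

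The remaining, genuinely delicate contributions come from extremal rays whose directions accumulate on the boundary $\partial C$, where proper discontinuity fails because rational boundary points typically have infinite stabilizer. I would handle these by induction on $\dim V$, stratifying $\partial C$ by its rational faces $F=\overline C\cap H$, where $H$ ranges over the rational supporting hyperplanes. For each such face the stabilizer $\Gamma_F\subseteq\Gamma$ preserves the lower-dimensional non-degenerate cone given by the relative interior of $F$ inside $\langle F\rangle$, together with the induced lattice, so the induction hypothesis applies there. The point of passing to the rational closure $C^+$ (Definition~\ref{def:RationalClosure}) rather than to $\overline C$ is exactly that it makes the traces of $\Pi_1$ and $\Pi_2$ near these faces behave like polyhedral cones to which the lower-dimensional Siegel property can be applied; one then glues the interior finiteness to the finitely many faces' finiteness to conclude that only finitely many extremal rays occur in total.

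The hard part will be this boundary analysis: one must show that although infinitely many $\gamma\in\Gamma$ may satisfy $(\gamma\cdot\Pi_1)\cap\Pi_2\neq\set{0}$ near a cusp, they produce only finitely many distinct intersection cones. Making the inductive step precise, namely matching the combinatorics of the cones $\Pi_i$ near a rational face $F$ with the Siegel property for $\Gamma_F$ on the smaller cone, and checking that the interior estimate together with the boundary estimates cover every possible extremal ray, is where essentially all of the work lies; the interior estimate via the invariant characteristic function is, by contrast, a soft consequence of $|\det\gamma|=1$.
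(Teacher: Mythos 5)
The paper does not prove this proposition at all: it is quoted as a black box from \cite[Theorem 3.8]{Looijenga}, so your attempt has to stand on its own, and as it stands it is a plan rather than a proof --- the plan defers precisely the decisive point. Your case analysis of extremal rays (rays whose directions stay in a fixed compact subset of $C$, and rays lying on a rational face of $\partial C$) is not exhaustive: the dangerous scenario is an infinite sequence of pairwise distinct rays, each contained in the open cone $C$, whose directions accumulate at $\partial C$. Such rays lie on no face, so your inductive stratum does not see them, and no single compact $K \subseteq C$ contains them, so proper discontinuity does not apply to them either; you acknowledge this (``where essentially all of the work lies'') but supply no mechanism for it. Moreover, even your ``soft'' interior step is incomplete as stated: proper discontinuity bounds $\set{\gamma \st \gamma K \cap K \neq \varnothing}$ for compact $K \subseteq C$, but to exploit it for a ray of $(\gamma \cdot \Pi_1) \cap \Pi_2$ with direction point $x \in K$ you would also need $\gamma^{-1}x$ to range in a fixed compact subset of $C$, and there is no reason for that: $\gamma^{-1}x \in \Pi_1 \subseteq C^+$, and $\Pi_1$ may touch $\partial C$, where the Vinberg metric and the characteristic function blow up. (The Vinberg ingredients themselves are fine: $|\det \gamma| = 1$ from the lattice, invariance of $\phi_C$, completeness of the Hessian metric, discreteness from $\Gamma \subseteq \GL(L)$. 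They just do not yield the asserted conclusion about rays.)

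The inductive boundary step has two further concrete problems. First, to invoke the induction hypothesis for $(\Gamma_F, F)$ you need $\Gamma_F$ to stabilize a lattice in $\langle F \rangle(\IQ)$, which requires the span of the face $F = \overline{C} \cap H$ to be a rational subspace; this is not automatic for a rational supporting hyperplane $H$, and you would have to show separately that only faces spanned by rational boundary points (the ones actually met by $C^+$) can occur. Second, and more seriously, the step is circular as set up: for a fixed rational face $F$, the elements $\gamma$ producing intersection rays inside $F$ need not lie in the stabilizer $\Gamma_F$, so before quoting the lower-dimensional Siegel property you must prove that only finitely many cosets $\gamma \Gamma_F$ are relevant --- equivalently, that the $\Gamma$-orbits of rational faces meeting a fixed polyhedral cone in $C^+$ form a finite collection. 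That finiteness is itself a form of the Siegel property and is where the real arithmetic work in Looijenga's argument sits; in your sketch it is hidden in the word ``glues''. In short, the reduction to finiteness of the ray set and the interior metric geometry are correct but soft, while every step carrying actual content (rays accumulating at the boundary, finiteness of relevant cosets along each face, rationality of face spans) is missing, so the proposal does not constitute a proof.
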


The following result is a generalization of the classical theory of Siegel sets. Recall that for a cone $C$ in $V$, its \emph{open dual} $C^\circ \subseteq V^*$ is the interior of the cone of those real-valued functionals which are non-negative on $C$.
\begin{theorem}[{\emph{cf.}~\cite[Proposition 4.1 and Application 4.14]{Looijenga}}]\label{thm:Looijenga}
Let $C$ be a non-degenerate open convex cone in a finite dimensional real vector space $V$ equipped with a fixed $\IQ$-structure. Let $\Gamma$ be a subgroup of $\GL(V)$ which stabilizes $C$ and some lattice in $V(\IQ)$. Assume that\,:
\begin{enumerate}
\item there exists a polyhedral cone $\Pi$ in $C^+$ such that $\Gamma \cdot \Pi \supseteq C$\,;
\item there exists an element $\xi \in C^\circ \cap V^*(\IQ)$ whose stabilizer in $\Gamma$ is trivial.
\end{enumerate}
Then $\Gamma$ admits a fundamental domain $\Sigma$ for its action on $C^+$, which is a rational polyhedral cone.
\end{theorem}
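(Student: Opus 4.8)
The plan is to realize the fundamental domain $\Sigma$ as a \emph{Dirichlet-type domain} cut out by the functional $\xi$, and then to use the Siegel property (Proposition~\ref{prop: Siegel}) to show that only finitely many of its defining inequalities are actually needed. Since $\Gamma$ preserves a lattice, every $\gamma\in\Gamma$ is defined over $\IQ$ and acts on $V^{*}$; as $\xi\in V^{*}(\IQ)$, each covector $\gamma\cdot\xi-\xi$ is rational, and the trivial-stabilizer hypothesis guarantees $\gamma\cdot\xi\neq\xi$ for $\gamma\neq 1$, so the hyperplanes $W_{\gamma}:=\set{x\in V \st \langle\gamma\cdot\xi-\xi,\,x\rangle=0}$ are genuine rational hyperplanes. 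I would set
\[\Sigma := \set{x\in C^{+} \st \langle\xi,x\rangle\leq\langle\gamma\cdot\xi,x\rangle \ \text{for all }\gamma\in\Gamma},\]
a closed convex cone, being the intersection of $C^{+}$ with the rational half-spaces $\langle\gamma\cdot\xi-\xi,\,\cdot\,\rangle\geq 0$. Geometrically, $x\in\Sigma$ precisely when $x$ minimizes the linear form $\langle\xi,\,\cdot\,\rangle$ over its $\Gamma$-orbit.

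The first two defining properties of a fundamental domain are relatively soft. To see that $\Sigma$ meets every orbit (so that $\Gamma\cdot\Sigma=C^{+}$), I would use that $\xi$ lies in the \emph{open} dual $C^{\circ}$, so $\langle\xi,\,\cdot\,\rangle$ is strictly positive on $\overline{C}\setminus\set{0}$ and each slab $\set{y\in\overline{C} \st \langle\xi,y\rangle\leq R}$ is compact. The Siegel property forces $\Gamma$ to act properly discontinuously on $C$, hence each orbit is discrete, and intersecting a discrete orbit with a compact slab leaves a finite set; therefore the minimum of $\langle\xi,\,\cdot\,\rangle$ over any orbit is attained, producing a point of $\Sigma$. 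For the disjointness of the $\Gamma$-translates of $\mathrm{int}(\Sigma)$, observe that on $\mathrm{int}(\Sigma)$ the inequalities $\langle\xi,x\rangle<\langle\gamma\cdot\xi,x\rangle$ hold strictly for every $\gamma\neq1$ — this is exactly where the trivial stabilizer of $\xi$ is used — and a one-line computation then shows that no nontrivial translate $\gamma\cdot x$ can also lie in $\Sigma$.

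The main obstacle is to prove that $\Sigma$ is \emph{polyhedral}: that the infinitely many defining inequalities collapse to finitely many, yielding a rational polyhedral cone. This is where hypothesis (1) is essential. The idea is to compare, for each orbit, its minimizing representative in $\Sigma$ with a representative in the fixed polyhedral cone $\Pi$, which exists since $\Gamma\cdot\Pi\supseteq C$. Applying proper discontinuity (again via Proposition~\ref{prop: Siegel}) to the compact cross-section $P:=\Pi\cap\set{\langle\xi,\,\cdot\,\rangle=1}$ shows that the set of \emph{neighbours}
\[N := \set{\gamma\in\Gamma \st \gamma\cdot P\cap P\neq\varnothing}\]
is finite. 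One then argues that a point of $\Sigma$ can violate the inequality $\langle\xi,x\rangle\leq\langle\gamma\cdot\xi,x\rangle$ only for $\gamma$ in a finite set built from $N$ and the finitely many faces of $\Pi$: for the remaining $\gamma$ the translate $\gamma\cdot\Pi$ is $\xi$-separated from $\Pi$, so the competing value $\langle\gamma\cdot\xi,x\rangle$ is automatically the larger one and the corresponding half-space is redundant. This would give
\[\Sigma = C^{+}\cap\bigcap_{\gamma\in N}\set{x\in V \st \langle\gamma\cdot\xi-\xi,\,x\rangle\geq 0},\]
a finite intersection of rational half-spaces, hence a rational polyhedral cone. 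Making this separation precise — identifying exactly which translates of $\Pi$ can interfere, and verifying the redundancy of all the others without circularity (in particular, applying the Siegel property only to the honestly polyhedral cone $\Pi$ rather than to $\Sigma$ itself) — is the technical heart of the argument and is the content of \cite[Proposition~4.1 and Application~4.14]{Looijenga}.
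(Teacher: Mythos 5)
The paper does not actually prove this theorem: it is imported wholesale from Looijenga, and your Dirichlet-domain construction $\Sigma = \set{x \in C^{+} \st \langle\xi,x\rangle \leq \langle\gamma\cdot\xi,x\rangle \text{ for all } \gamma\in\Gamma}$ — orbit-minimization for surjectivity, strict inequalities on $\mathrm{int}(\Sigma)$ from the trivial stabilizer of $\xi$, polyhedrality extracted from hypothesis (1) via the Siegel property — is exactly the mechanism of the cited \cite[Proposition 4.1 and Application 4.14]{Looijenga}. Your sketch is correct in outline and, like the paper itself, delegates the one genuinely technical point (the separation/redundancy argument showing finitely many half-spaces suffice) to Looijenga; the only cosmetic quibble is that proper discontinuity of the action on $C$ follows most cleanly from discreteness of $\Gamma$ (it preserves a lattice) together with the canonical invariant metric on the cone, rather than from the Siegel property per se.
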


We will be interested in the case where $V$ comes from a hyperbolic lattice. Then via the metric, $V$ is identified with $V^{*}$ and $C$ with its open dual $C^{\circ}$. Hence the second assumption of Theorem \ref{thm:Looijenga} will be automatically satisfied thanks to the following fact. This proposition should be known to experts. But as we could not find a reference, a proof is included below for the convenience of the reader.

\begin{prop}\label{prop:TrivialStab}
Let $L$ be a hyperbolic lattice\footnote{A \emph{hyperbolic lattice} is a free abelian group of finite rank endowed with a non-degenerate bilinear form of signature $(1, \rank -1)$.} of rank $m\geq 1$ and $C \subseteq V:= L_\IR$ one of the two connected components of the set $\{ v \in V \,|\, v^2 > 0 \}$. Let $\Gamma$ be a subgroup of $O(L)$ which preserves $C$. Then the set of points $x \in C$ whose stabilizer in $\Gamma$ is trivial is open. In particular, there exists a rational point in $C$ with trivial stabilizer.
\end{prop}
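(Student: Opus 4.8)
The plan is to realise $C$ as the cone over a copy of hyperbolic space, on which the discrete group $\Gamma$ acts properly discontinuously, and then to combine the local finiteness of the fixed loci with a Baire category argument. First I would observe that for every $\gamma \in \Gamma$ the fixed locus in $V$ is the linear subspace $V^{\gamma} := \ker(\gamma - \id)$, and that $\Stab_\Gamma(x) = \set{\gamma \in \Gamma \st x \in V^{\gamma}}$ is constant along each ray $\IR_{>0}\cdot x$, since $\gamma$ is $\IR$-linear. Hence $x \in C$ has trivial stabiliser if and only if $x \notin \bigcup_{\gamma \neq \id} V^{\gamma}$. For $\gamma \neq \id$ the subspace $V^{\gamma}$ is proper, so $V^{\gamma} \cap C$ is closed and nowhere dense in $C$; moreover $\Gamma \subseteq O(L) \subseteq \GL(L) \cong \GL_m(\IZ)$ is a countable discrete group.

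The key step is the openness of the set $T$ of points of $C$ with trivial stabiliser. For this I would pass to the hyperboloid $S := \set{v \in C \st v^2 = 1}$, a model of $(m-1)$-dimensional hyperbolic space, on which $\Gamma$ acts as a discrete subgroup of the isometry group $O^{+}(V) \cong O(1, m-1)$; such an action is properly discontinuous. Thus each $p \in S$ has an open neighbourhood $U$ with compact closure such that $F := \set{\gamma \in \Gamma \st \gamma\overline{U} \cap \overline{U} \neq \varnothing}$ is finite. Any $\gamma$ fixing a point of $U$ lies in $F$, so inside $U$ the non-trivial-stabiliser locus equals the finite union $\bigcup_{\gamma \in F \setminus \set{\id}} (V^{\gamma} \cap U)$, which is relatively closed; hence $T \cap U$ is open. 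As $p$ is arbitrary, $T \cap S$ is open in $S$, and by scaling-invariance $T$ is open in $C$.

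For non-emptiness and rationality I would argue as follows. The cone $C$ is open in $V \cong \IR^m$, hence a Baire space, and $\bigcup_{\gamma \neq \id} (V^{\gamma} \cap C)$ is a countable union of closed nowhere-dense subsets; by the Baire category theorem its complement $T$ is dense in $C$, in particular non-empty. Since $T$ is open and the rational points $V(\IQ) = L_{\IQ}$ are dense in $V$, we get $T \cap V(\IQ) \neq \varnothing$, i.e.\ a rational point of $C$ with trivial stabiliser. The one step that genuinely needs care is the proper discontinuity of the $\Gamma$-action on $S$ (equivalently, the finiteness of the return set $F$); this is the standard fact that a discrete subgroup of $O(1, m-1)$ acts properly discontinuously on hyperbolic space, and everything else is formal.
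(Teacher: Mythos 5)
Your proof is correct, and it reaches the conclusion by a somewhat different route than the paper. Both arguments pass to the hyperboloid model $S \cong \IH^{m-1}$ and exploit discreteness of $\Gamma$ (which, as you note, follows from $\Gamma \subseteq O(L)$), but the logical organization differs. The paper first shows each stabilizer is finite (an element fixing $x$ lies in the compact group $O(x^\perp)$ intersected with the discrete group $O(L)$), then considers points whose stabilizer has \emph{minimal cardinality}: using a ball of radius $\frac{1}{3}\min_{\gamma \notin \Stab_\Gamma(x)} d([x],\gamma\cdot[x])$ it shows $\Stab_\Gamma(y) \subseteq \Stab_\Gamma(x)$ nearby, hence equality at a minimal point, so the minimal-stabilizer locus is open; triviality of the minimal stabilizer then follows because the preimage of the ball in $V$ contains a basis, and an isometry fixing a basis is the identity. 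Existence is automatic in that scheme (a minimal point exists since stabilizer cardinalities are positive integers), so no category argument is needed. You instead invoke proper discontinuity as a black box to get, near each $p \in S$, a finite set $F$ of group elements that can fix points there, identify the bad locus locally with the finite union $\bigcup_{\gamma \in F \setminus \{\id\}} V^\gamma \cap U$ of slices of proper linear subspaces (your substitute for the paper's basis trick), and then get non-emptiness and density of $T$ from countability of $\Gamma$ and Baire. Your version buys a stronger conclusion for free ($T$ is open \emph{and dense}) and never needs finiteness of individual stabilizers; the paper's version is more hands-on (the $\frac{1}{3}$-distance ball is an explicit form of proper discontinuity) and avoids both countability and Baire. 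The one step you flagged, proper discontinuity of a discrete subgroup of $\Isom(\IH^{m-1})$, is indeed standard (point stabilizers in the isometry group are compact, so the ambient Lie group acts properly and any discrete subgroup acts properly discontinuously), and the paper relies on essentially the same fact implicitly when it takes the minimum defining $r(x)$ to be positive.
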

\begin{proof}
Let $x \in C$, and assume that $\gamma \in \Gamma$ fixes $x$\,: as a consequence $\gamma \in O(L) \cap O(x^\perp)$, and so $\Stab_\Gamma(x)$ is finite for every $x \in C$. It then makes sense to speak of the set of points with minimal stabilizer, and we want to prove first of all that this set is open, and then that the minimal stabilizer is trivial\,; which will imply the proposition.

Consider the hyperboloid model $\IH^{m - 1} = \IP(C)$ of the hyperbolic space, on which our group $\Gamma$ acts naturally as a discrete group of isometries with respect to the hyperbolic distance $d$. Let $[x] \in \IH^{m - 1}$ be a point with minimal stabilizer, $r = r(x) = \frac{1}{3} \min_{\gamma \in \Gamma \smallsetminus \Stab_\Gamma(x)} d([x], \gamma \cdot [x])$ and consider the ball $B = B([x], r)$. Then $B \cap \gamma \cdot B \neq \varnothing$ if and only if $\gamma \in \Stab_\Gamma(x)$. Moreover, for any $[y] \in B$ and $\gamma \in \Stab_\Gamma(y)$ we have that $d([y], \gamma \cdot [x]) = d([y], [x])$ and so $\gamma$ must stabilize also $[x]$, \emph{i.e.}~$\Stab_\Gamma(y) \subseteq \Stab_\Gamma(x)$. As $\Stab_\Gamma(x)$ is of minimal cardinality these two must coincide, which proves that the set of points with minimal stabilizer is open.

We want to prove that the minimal stabilizer is trivial. Let $x \in V$ and $B \subseteq \IH^{n - 1} = \IP(V)$ be as above. The pre-image of $B$ in $V$ is then an open subset, containing a basis $e_1, \ldots, e_n$ for $V$. For $\gamma \in \Stab_\Gamma(x)$ we know from the previous part that $\gamma \in \Stab_\Gamma(e_i)$ for every $i$, which readily implies that $\gamma$ is the identity.
\end{proof}

\subsection{Cone conjectures}\label{sect: cone conjectures}
In this subsection, let $X$ be a \emph{projective} hyperk\"ahler manifold. Thanks to the BBF form (\S \ref{sect: action on BBF lattice}), the N\'eron--Severi space $\NS(X)_{\IR}$ is endowed with a non-degenerate symmetric bilinear form of signature $(1, \rho(X)-1)$. Let $\cA(X)\subseteq \NS(X)_{\IR}$ be the ample cone of $X$. Let $\Aut (\cA(X))$ be the group of isometries of $\NS(X)_\IR$ preserving the ample cone $\cA(X)$, the \emph{group of motions} of $\cA(X)$.

Recall that the natural action of $\Aut(X)$ on $\cA(X)$ is extended in \S \ref{subsect:Dagger} to an action of $\KAut(X)$, denoted by $\dagger$, see Lemma~\ref{lemma:dagger}.
Let $$\Aut^{*}(X)=\im(\Aut(X) \longrightarrow \Aut(\cA(X)))$$ and similarly for $\KAut^{*}(X)$.

For a hyperk\"ahler variety $X$, the Morrison--Kawamata cone conjecture is proved recently by Amerik--Verbitsky (\cite{AmerikVerbitsky}, \cite{AV-IMRN}) based on their earlier work \cite{AV-rational}, using hyperbolic geometry and ergodic theory. Their result says that $\Aut(X)$ \emph{acts with finitely many orbits on the set of facets of the K\"ahler cone of $X$} (see \cite[Theorem 2.13 and the discussion after]{AV-hyperbolic}). As a consequence, they deduce the second part of Theorem \ref{thm: m-k conj for three groups} below. 
Before that, Markman \cite{Markman} established a \emph{birational} analogue for the \emph{movable cone} $\MV(X)$, that is, the cone generated by the classes of movable divisors, and with the action on it given by the group \[\Bir^*(X) = \im(\Bir(X) \longrightarrow \Aut(\MV(X))).\]

The main result of this section is part (3) of the following theorem, which confirms the analogous cone conjecture for the action of Klein automorphisms $\KAut^{*}(X)$ on the rational closure (Definition \ref{def:RationalClosure}) $\cA^{+}(X)$ of the ample cone $\cA(X)$\,:
\begin{theorem}[Cone conjectures\,: extended]\label{thm: m-k conj for three groups}
Let $X$ be a projective hyperk\"ahler manifold. Then
\begin{enumerate}
\item ({\cite[Theorem 6.25]{Markman}}) There exists a rationally polyhedral cone $\Delta$ which is a fundamental domain for the action of $\Bir^*(X)$ on $\MV^{+}(X)$.
\item ({\cite[Theorem 5.6]{AmerikVerbitsky}}) There exists a rationally polyhedral cone $\Pi$ which is a fundamental domain for the action of $\Aut^*(X)$ on $\cA^{+}(X)$.
\item There exists a rationally polyhedral cone $\Sigma$ which is a fundamental domain for the action of $\KAut^*(X)$ on $\cA^{+}(X)$.
\end{enumerate}
\end{theorem}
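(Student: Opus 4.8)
The plan is to deduce part (3) from the already-established part (2) (the cone conjecture of Amerik--Verbitsky for $\Aut^*(X)$) by applying Looijenga's Theorem~\ref{thm:Looijenga} to the enlarged group $\Gamma = \KAut^*(X)$ acting on the cone $C = \cA(X)$ inside $V = \NS(X)_\IR$. The first task is to record that this setup satisfies the standing arithmetic hypotheses. By Remark~\ref{rem:Isometry} every operator $f^\dagger$ is an \emph{integral} isometry of the BBF form, and by Lemma~\ref{lemma:dagger} it restricts to $\NS(X)_\IR$ and preserves $\cA(X)$; hence $\KAut^*(X)$ is a subgroup of $O(\NS(X))$ that stabilizes both the lattice $\NS(X)$ and the cone $\cA(X)$. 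Since $\NS(X)_\IR$ is hyperbolic of signature $(1, \rho(X)-1)$ and $\cA(X)$ lies in one connected component $\cP$ of $\{v : v^2 > 0\}$, an isometry in $\KAut^*(X)$ cannot swap the two components (it fixes $\cA(X) \subseteq \cP$), so $\KAut^*(X)$ stabilizes $\cP$ as well.

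Next I would verify the two hypotheses of Theorem~\ref{thm:Looijenga}. For hypothesis (1), let $\Pi$ be the rationally polyhedral fundamental domain for $\Aut^*(X)$ on $\cA^{+}(X)$ furnished by part (2). Then $\Pi \subseteq \cA^{+}(X) = C^{+}$ and $\Aut^*(X) \cdot \Pi = \cA^{+}(X) \supseteq \cA(X)$. As $\Aut^*(X)$ is a subgroup of $\KAut^*(X)$ (of index at most $2$, by Lemma~\ref{lemma:Index2}), a fortiori $\KAut^*(X) \cdot \Pi \supseteq \cA(X) = C$, so the same $\Pi$ works for the larger group. This is the step where the substantive content of part (2) enters, and it transfers to $\KAut^*(X)$ for free by mere containment.

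For hypothesis (2), I would invoke Proposition~\ref{prop:TrivialStab} applied to the hyperbolic lattice $\NS(X)$, the component $\cP$, and the group $\KAut^*(X)$: it produces a rational point $\xi \in \cP$ whose stabilizer in $\KAut^*(X)$ is trivial. It remains to see that $\xi$ lies in the open dual cone $\cA(X)^\circ$. Using the metric to identify $\NS(X)_\IR$ with its dual, the positive cone $\cP$ of a hyperbolic (Lorentzian) lattice is self-dual, $\cP^\circ = \cP$; and since $\cA(X) \subseteq \cP$, dualizing reverses the inclusion and gives $\cP = \cP^\circ \subseteq \cA(X)^\circ$. Thus $\xi \in \cA(X)^\circ \cap V^*(\IQ)$ has trivial stabilizer, as required.

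With both hypotheses in place, Theorem~\ref{thm:Looijenga} yields a rationally polyhedral fundamental domain $\Sigma$ for the action of $\KAut^*(X)$ on $\cA^{+}(X)$, which is precisely the assertion of (3). I do not expect a serious obstacle: all the hard analysis is already encapsulated in part (2) and in the convex-geometry results of \S\ref{sect: convex geometry}. The only points requiring genuine care are structural, namely checking that passing from $\Aut^*(X)$ to the Klein group $\KAut^*(X)$ preserves integrality, stabilization of the lattice, and stabilization of the positive cone component $\cP$ (so that Proposition~\ref{prop:TrivialStab} and Theorem~\ref{thm:Looijenga} still apply to the larger group), together with the self-duality remark guaranteeing that the trivial-stabilizer point sits in the open dual cone. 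When $X$ admits no anti-holomorphic automorphism the statement simply collapses to part (2), since then $\KAut^*(X) = \Aut^*(X)$.
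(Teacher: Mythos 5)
Your proposal is correct and follows essentially the same route as the paper: take the rationally polyhedral fundamental domain $\Pi$ from part (2) to verify the first hypothesis of Theorem~\ref{thm:Looijenga} for the larger group $\KAut^*(X)$ (which preserves $\NS(X)$ and $\cA(X)$ by Remark~\ref{rem:Isometry} and Lemma~\ref{lemma:dagger}), and use Proposition~\ref{prop:TrivialStab} for the second hypothesis. You in fact spell out details the paper leaves implicit in the remark preceding Proposition~\ref{prop:TrivialStab}, namely the self-duality $\cP=\cP^{\circ}$ of the positive cone and the inclusion $\cP\subseteq\cA(X)^{\circ}$ placing the trivial-stabilizer rational point in the open dual cone.
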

\begin{proof}
$(2)$ The existence of a polyhedral fundamental domain for the action of $\Aut^*(X)$ is proved in \cite[\S 5.2]{AmerikVerbitsky} under the assumption that $b_2(X) \neq 5$, this last technical gap on Betti number was later filled in \cite[Corollary 1.4]{AV-IMRN}. The fact that there exists a \emph{rational} polyhedral fundamental domain then follows by applying Theorem \ref{thm:Looijenga} together with Proposition \ref{prop:TrivialStab}.


$(3)$ Note that both $\Aut^*(X)$ and $\KAut^*(X)$ preserve the ample cone $\cA(X)$ and the integral lattice $\NS(X)$ inside $\NS(X)_\IR$ (see \S \ref{subsect:Dagger}). Let $\Pi\subset \cA^{+}(X)$ be a polyhedral fundamental domain for the action of $\Aut^*(X)$ constructed in $(2)$. Then as $\KAut^*(X)$ contains $\Aut^*(X)$ (with finite index), one can apply Theorem \ref{thm:Looijenga} to $\Gamma=\KAut^{*}(X)$, to obtain the rationally polyhedral fundamental domain $\Sigma$. 
\end{proof}

\section{Proof of Theorem \ref{main2'} in the projective case}\label{sect:Proj}
Theorem~\ref{main2'} in the projective case takes the following form\,:
\begin{theorem}\label{main2-proj}
Let $X$ be a projective hyperk\"ahler variety. Then there are only finitely many conjugacy classes of finite subgroups of $\KAut(X)$, $\Aut(X)$ and $\Bir(X)$.
\end{theorem}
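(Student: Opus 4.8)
The plan is to reduce the whole statement to a single geometric input, the existence of a rationally polyhedral fundamental domain provided by Theorem \ref{thm: m-k conj for three groups}, and then to transport finiteness across the kernel of the $\dagger$-action using the cohomological machinery of \S\ref{sect: group cohomology}. First I would cut down the number of groups to treat. Since $\Aut(X)$ sits inside $\KAut(X)$ with index at most $2$ (Lemma \ref{lemma:Index2}), the last clause of Lemma \ref{lemma:FiniteConj} shows that it is enough to establish the conclusion for $\KAut(X)$ and for $\Bir(X)$; the case of $\Aut(X)$ is then automatic. For $A\in\{\KAut(X),\Bir(X)\}$ I would consider the homomorphism to the group of motions of the relevant cone furnished by the $\dagger$-operation of \S\ref{subsect:Dagger}, namely
\[ 1 \longrightarrow A_0 \longrightarrow A \longrightarrow A^* \longrightarrow 1, \]
where $A^*=\KAut^*(X)$ acts on $\cA^+(X)$ (resp.\ $A^*=\Bir^*(X)$ acts on $\MV^+(X)$) and $A_0$ is the kernel. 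The first observation is that $A_0$ is \emph{finite}: an element of $A_0$ acts trivially on $\NS(X)_\IR$, hence fixes an ample (resp.\ movable) class, and a holomorphic automorphism fixing an ample class lies in the finite group $\Aut(X,H)$ (finite since $\Aut(X)$ is discrete and preserves a polarization); the anti-holomorphic and birational cases add only a bounded ambiguity, using the standard finiteness of $\ker(\Bir(X)\to O(\NS(X)))$ for projective hyperk\"ahler manifolds.

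The heart of the argument is the claim that $A^*$ has only finitely many conjugacy classes of finite subgroups. Here I would exploit the fundamental domain $\Sigma$ coming from Theorem \ref{thm: m-k conj for three groups}. Its stabilizer $\Stab_{A^*}(\Sigma)=\{\gamma:\gamma\Sigma=\Sigma\}$ is finite, because it permutes the finitely many faces of the rational polyhedral cone $\Sigma$ and acts faithfully on the rational rays spanning $\Sigma$ (recall $A^*\le\GL(\NS(X))$). Now let $H\le A^*$ be a finite subgroup. Averaging any point of the open convex cone preserved by $A^*$ produces an $H$-fixed point $q$ in that cone; after conjugating $H$ by an element of $A^*$ sending $q$ into $\Sigma$, I may assume $q\in\Sigma$. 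Then for each $h\in H$ one has $q=hq\in\Sigma\cap h\Sigma$, and if $q$ lies in the interior of $\Sigma$ the disjointness of the interiors of distinct translates forces $h\Sigma=\Sigma$, i.e.\ $H\le\Stab_{A^*}(\Sigma)$. Thus $H$ is conjugate into a fixed finite group, leaving only finitely many possibilities up to conjugacy. By Lemma \ref{lemma:FiniteConj} this simultaneously yields that the cardinalities of finite subgroups of $A^*$ are bounded and that $H^1(G,A^*)$ is finite for every finite group $G$.

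The main obstacle is precisely the assumption that $q$ can be taken in the \emph{interior} of $\Sigma$: a priori the fixed subspace $\mathrm{Fix}(H)$ might be contained in one of the interior walls of the chamber decomposition, so $q$ lands on a proper face. To deal with this I would invoke the Siegel property (Proposition \ref{prop: Siegel}) with $\Pi_1=\Pi_2=\Sigma$, which forces only finitely many translates $\gamma\Sigma$ to meet $\Sigma$; since $H$ fixes $q$ it permutes the finite set of chambers through $q$, and one reduces to the stabilizer of the minimal face containing $q$ and argues by descending induction on the dimension of that face, the base case being $q$ interior to a chamber. Making this local finiteness and the accompanying face induction rigorous is the delicate point; everything else is formal.

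Finally I would pass from $A^*$ back to $A$. With $A_0$ finite and $A^*$ satisfying conditions $(1)$ and $(2)$ of Lemma \ref{lemma:FiniteConj}, the exact sequence of pointed sets attached to $1\to A_0\to A\to A^*\to1$ (with $G$ acting trivially) gives that $H^1(G,A^*)$ is finite and, by Lemma \ref{lemma:twist}, that every fibre of $H^1(G,A)\to H^1(G,A^*)$ is a quotient of some $H^1(G,(A_0)_\phi)$, which is finite because each twist of the finite group $A_0$ is again finite (Lemma \ref{lemma:GoodGroups}). Hence $H^1(G,A)$ is finite; moreover the finite subgroups of $A$ have bounded cardinality, as their images in $A^*$ do and their intersections with $A_0$ are bounded by $|A_0|$. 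Lemma \ref{lemma:FiniteConj} then delivers the conclusion for $A$, establishing the theorem for $\KAut(X)$ and $\Bir(X)$, and therefore also for $\Aut(X)$.
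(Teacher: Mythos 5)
Your architecture coincides with the paper's: the same kernel--image exact sequence under the $\dagger$-action, finiteness of the kernel, the extended cone conjecture (Theorem \ref{thm: m-k conj for three groups}) plus averaging plus the Siegel property for the image, and the Borel--Serre machinery (Lemmas \ref{lemma:FiniteConj} and \ref{lemma:twist}) to recombine. But at the crux --- finitely many conjugacy classes of finite subgroups of $A^*$ --- your argument has a genuine gap, which you yourself flag: you try to conjugate the finite group $H$ into the stabilizer $\Stab_{A^*}(\Sigma)$, which only works when the averaged fixed point $q$ lies in the \emph{interior} of $\Sigma$, and your proposed repair (descending induction on the dimension of the minimal face containing $q$) is left as a sketch with the decisive local-finiteness step unproved. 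A secondary inaccuracy compounds this: Proposition \ref{prop: Siegel} asserts finiteness of the \emph{collection of intersections} $\set{(\gamma\cdot\Sigma)\cap\Sigma \st \gamma\in\Gamma}$ as sets, not directly that only finitely many $\gamma$ satisfy $(\gamma\cdot\Sigma)\cap\Sigma\neq\set{0}$; passing from the former to the latter requires the injectivity of $\gamma\mapsto\gamma\cdot\Sigma$, which comes from the fundamental-domain property (distinct elements carry the interior of $\Sigma$ to disjoint sets). Similarly, your finiteness of $\Stab_{A^*}(\Sigma)$ via faithfulness on the extremal rays is asserted rather than proved.

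The missing idea is that the interior/boundary dichotomy you wrestle with is entirely unnecessary: one does not need $H$ to land in a \emph{subgroup} at all. In the paper's Proposition \ref{prop:key}, after averaging and conjugating so that the fixed point $x_0$ lies in $\Sigma$ (note $x_0\in\cA(X)$, so $x_0\neq 0$), every element $h$ of the conjugated group satisfies $x_0\in (h\cdot\Sigma)\cap\Sigma\neq\set{0}$, hence the whole conjugated group is contained in the fixed \emph{finite set} $\cS=\set{\varphi\in\KAut^*(X)\st \varphi(\Sigma)\cap\Sigma\neq\set{0}}$; finiteness of $\cS$ follows because any such $\varphi(\Sigma)$ shares a ray with $\Sigma$, $\Sigma$ has finitely many rays, the Siegel property bounds the translates through each ray, and $\varphi\mapsto\varphi(\Sigma)$ is injective since $\Sigma$ is a fundamental domain. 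A finite set has finitely many subsets, so finitely many conjugacy classes follow immediately, with no case analysis on where $x_0$ sits and no induction on faces. Two smaller remarks: your finiteness of the kernel for $\Aut(X)$ via polarized automorphisms is a legitimate alternative to the paper's Proposition \ref{prop:KAutkernelfinite} (which instead uses the transcendental lattice and \cite[Proposition~9.1]{HuyInventiones}), but for $\Bir^{\#}(X)$ your appeal to ``standard finiteness of the kernel'' is circular as phrased --- the paper grounds it in Markman's Torelli Theorem \ref{thm:Markman}, showing a birational self-map acting trivially on $\NS(X)$ preserves ample classes and is therefore biregular, reducing to the automorphism case. Your final cohomological recombination is exactly the paper's and is correct.
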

Recall that by Remark~\ref{rem:bridge} and Lemma~\ref{lemma:FiniteConj}, Theorem~\ref{main2-proj} implies the projective case of Theorem~\ref{main2}.

Let $X$ be a projective hyperk\"ahler variety throughout this section. To study the group $\KAut(X)$, let us break it into two pieces of different nature.
Consider the $\dagger$-action defined in \S \ref{subsect:Dagger} of $\KAut(X)$ on the N\'eron--Severi space $\NS(X)_{\IR}$ by isometries (Remark~\ref{rem:Isometry}). Let $\Aut(\cA(X))$ be the group of isometries of $\NS(X)_{\IR}$ preserving the ample cone $\cA(X)$. We have thus a homomorphism $\KAut(X)\to \Aut(\cA(X))$.
Denoting by $\KAut^{\#}(X)$ and $\KAut^{*}(X)$ its kernel and image respectively 
, we obtain a short exact sequence of groups\,:
\begin{equation}\label{eqn:BreakKAut}
 1\longrightarrow \KAut^{\#}(X)\longrightarrow\KAut(X)\longrightarrow\KAut^{*}(X)\longrightarrow 1.
\end{equation}
As $\cA(X)$ is open in $\NS(X)_{\IR}$, $\KAut^{\#}(X)$ is also the group of Klein automorphisms acting trivially on the N\'eron--Severi lattice $\NS(X)\simeq \Pic(X)$.

\begin{prop}\label{prop:KAutkernelfinite}
The group $\KAut^{\#}(X)$ is finite.
\end{prop}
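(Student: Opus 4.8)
The plan is to split $\KAut^{\#}(X)$ into its holomorphic and anti-holomorphic parts and to reduce the statement to the finiteness of the holomorphic subgroup. Set $\Aut^{\#}(X) := \KAut^{\#}(X)\cap \Aut(X)$. Since $\Aut(X)$ has index at most $2$ in $\KAut(X)$ by Lemma~\ref{lemma:Index2}, restricting the signature map $\epsilon$ of \eqref{eqn:LES} to $\KAut^{\#}(X)$ yields an exact sequence $1\to \Aut^{\#}(X)\to \KAut^{\#}(X)\to \{\pm 1\}$, so that $\Aut^{\#}(X)$ sits inside $\KAut^{\#}(X)$ with index at most $2$. It therefore suffices to prove that $\Aut^{\#}(X)$ is finite.

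First I would unwind the definition of $\Aut^{\#}(X)$. It consists of the holomorphic automorphisms $f$ for which $f^{\dagger}=f^{*}$ acts trivially on $\NS(X)_{\IR}$, equivalently, as noted right after \eqref{eqn:BreakKAut}, those acting trivially on the lattice $\NS(X)\simeq \Pic(X)$. In particular every $f\in \Aut^{\#}(X)$ fixes a chosen ample class $H\in \NS(X)$, so $\Aut^{\#}(X)$ is contained in the group $\Aut(X,H)$ of automorphisms of the polarized variety $(X,H)$.

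The key step is then the finiteness of $\Aut(X,H)$. Embedding $X$ into a projective space by a sufficiently high power of $H$ realizes $\Aut(X,H)$ as a closed subgroup of some $\PGL$, hence as a linear algebraic group. Its Lie algebra is the space of global holomorphic vector fields $H^{0}(X, T_{X})$, which vanishes for a hyperk\"ahler manifold: the holomorphic symplectic form induces an isomorphism $T_{X}\cong \Omega^{1}_{X}$, and $H^{0}(X,\Omega^{1}_{X})=H^{1,0}(X)=0$ because $X$ is simply connected. A linear algebraic group with trivial Lie algebra is finite, so $\Aut(X,H)$, and a fortiori $\Aut^{\#}(X)$, is finite; together with the index-$2$ reduction above this gives the finiteness of $\KAut^{\#}(X)$.

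I expect the only substantial point to be the finiteness of $\Aut(X,H)$, which is where the geometry of $X$ enters, through the vanishing $H^{0}(X,T_{X})=0$; everything else is formal. An alternative route, avoiding the polarized-automorphism input, would be to restrict the $\dagger$-action to the transcendental lattice $T(X)$: an element of $\Aut^{\#}(X)$ then acts on $T(X)$ as a Hodge isometry, the character recording its action on the line $H^{2,0}(X)$ takes values in roots of unity of bounded order, and its kernel is trivial because $T(X)_{\IQ}$ is an irreducible $\IQ$-Hodge structure; this bounds the image of $\Aut^{\#}(X)$ in $O(T(X))$, after which one still needs to control automorphisms acting trivially on all of $H^{2}(X,\IZ)$, again via $H^{0}(X,T_{X})=0$. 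I would present the polarized-automorphism argument as the main one, as it is shorter and handles both factors at once.
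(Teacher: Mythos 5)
Your proof is correct, and it takes a genuinely different route from the paper's. The opening reduction is the same: $\Aut^{\#}(X)=\KAut^{\#}(X)\cap\Aut(X)$ has index at most $2$ in $\KAut^{\#}(X)$ via the signature map of \eqref{eqn:LES}, so everything hinges on the finiteness of $\Aut^{\#}(X)$. There the paper argues lattice-theoretically: it introduces the symplectic subgroup $\Aut^{s}(X)$, uses the irreducibility of the polarizable Hodge structure on the transcendental lattice $T(X)$ to see that elements of $\Aut^{s}(X)$ act trivially on $T(X)$, deduces that $\Aut^{s}(X)\cap\Aut^{\#}(X)$ acts trivially on all of $H^{2}(X,\IZ)$ (as $T(X)\oplus\NS(X)$ has finite index there, by projectivity), invokes the finiteness of the group of automorphisms acting trivially on $H^{2}$ (\cite[Proposition~9.1]{HuyInventiones}), and finishes with the finite cyclicity of $\Aut(X)/\Aut^{s}(X)$. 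You instead pin down an invariant ample class $H$ and invoke the classical finiteness of the polarized automorphism group $\Aut(X,H)$: it is a linear algebraic group --- a closed subgroup of some $\PGL$, which uses implicitly that $\Pic^{0}(X)$ is trivial, so that fixing the class of $H$ in $\NS(X)\simeq\Pic(X)$ really gives $f^{\hstar}H\cong H$ as line bundles; this is harmless, since the paper records that isomorphism right after \eqref{eqn:BreakKAut} --- and its Lie algebra $H^{0}(X,T_{X})\cong H^{0}(X,\Omega^{1}_{X})=0$ vanishes by the symplectic form and simple connectedness. Your route is shorter and more self-contained, replacing two structural inputs about hyperk\"ahler automorphisms by one classical fact about polarized varieties without vector fields (note that the ``alternative route'' you sketch at the end, via $T(X)$ and the character on $H^{2,0}$, is in outline the paper's actual proof). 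What the paper's route buys in exchange is the more refined control exploited in the remark following Theorem~\ref{main2-proj}: there the factorization through $\Aut^{s}(X)$ and the group acting trivially on $H^{2}(X,\IZ)$ produces a bound on $|G^{\#}\cap\Aut(X)|$ depending only on the deformation type of $X$, whereas your bound via $|\Aut(X,H)|$ depends a priori on the chosen polarization. Both arguments use projectivity essentially --- yours for the existence of an ample class, the paper's for the irreducibility of $T(X)$ and the finite-index decomposition --- consistent with the fact that the non-projective case is handled separately in Theorem~\ref{main2-non-proj}.
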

\begin{proof}
Let $\Aut^{\#}(X)$ be the group of automorphisms of $X$ acting trivially on $\NS(X)$. As the product of any two non-trivial elements of $\KAut^{\#}(X)$ is in $\Aut^{\#}(X)$, $\Aut^{\#}(X)$ is of index at most $2$ in $\KAut^{\#}(X)$. Hence it is enough to show the finiteness of $\Aut^{\#}(X)$.

Let
\[\Aut^{s}(X) = \set{f \in \Aut(X) \st f^*|_{H^{2,0}(X)} = \id}\]
be the group of \emph{symplectic} automorphisms of $X$. As $X$ is projective, the transcendental lattice $T(X):=\NS(X)^{\perp_{BBF}}$ carries a polarizable irreducible Hodge structure (\emph{cf.}~\cite[Lemma~\textbf{3}.2.7 and Lemma~\textbf{3}.3.1]{HuyK3}). Hence $\Aut^{s}(X)$ also acts trivially on $T(X)$. Then the intersection $\Aut^{s}(X)\cap \Aut^{\#}(X)$ is the group of automorphisms acting trivially both on the transcendental and N\'eron--Severi lattices, hence trivially on the whole $H^{2}(X,\IZ)$ since $T(X)\oplus \NS(X)$ is of finite index in $H^{2}(X, \IZ)$, by the projectivity of $X$. Therefore, thanks to \cite[Proposition~9.1]{HuyInventiones}, $\Aut^{s}(X)\cap \Aut^{\#}(X)$ is a finite group.
 
On the other hand, by \cite[Corollary~\textbf{3}.3.4]{HuyK3}, $\Aut(X)/\Aut^{s}(X)$ is a finite cyclic group, hence so is its subgroup $\Aut^{\#}(X)/(\Aut^{s}(X)\cap \Aut^{\#}(X))$. In consequence, $\Aut^{\#}(X)$ is also finite.
\end{proof}

Concerning the action of $\Bir(X)$ on the movable cone $\MV(X)$, there is an exact sequence analogous to \eqref{eqn:BreakKAut}\,:
\begin{equation}\label{eqn:BreakBir}
1 \longrightarrow \Bir^{\#}(X) \longrightarrow \Bir(X)\longrightarrow \Bir^{*}(X)\longrightarrow 1,
\end{equation}
where $\Bir^*(X)$ and $\Bir^{\#}(X)$ are the image and the kernel of the natural homomorphism $\Bir(X) \longrightarrow \Aut(\MV(X))$.

\begin{cor}\label{cor:BirKerFinite}
The group $\Bir^{\#}(X)$ is finite.
\end{cor}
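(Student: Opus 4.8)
The plan is to follow closely the proof of Proposition~\ref{prop:KAutkernelfinite}, replacing the ample cone by the movable cone throughout. Recall from \eqref{eqn:BreakBir} that $\Bir^{\#}(X)$ is the group of birational automorphisms acting trivially on $\MV(X)$. First I would observe that, since the movable cone contains the ample cone $\cA(X)$ and hence has non-empty interior in $\NS(X)_{\IR}$, any element of $\Bir^{\#}(X)$ acts trivially on all of $\NS(X)_{\IR}$, thus on the lattice $\NS(X)\simeq\Pic(X)$. This is the exact analogue of the remark following \eqref{eqn:BreakKAut}.

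Next I would introduce the subgroup of \emph{symplectic} birational automorphisms $\Bir^{s}(X)=\set{f\in\Bir(X)\st f^{*}|_{H^{2,0}(X)}=\id}$, using that every $f\in\Bir(X)$ is an isomorphism in codimension one and therefore acts on $H^{2}(X,\IZ)$ by a Hodge isometry of the BBF lattice. As $X$ is projective, $T(X):=\NS(X)^{\perp_{BBF}}$ carries an irreducible polarizable Hodge structure, so an element of $\Bir^{s}(X)$ fixes $H^{2,0}(X)$ and hence acts as the identity on $T(X)$ (its fixed locus is a non-zero rational sub-Hodge-structure of $T(X)$, thus all of it by irreducibility). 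Consequently $\Bir^{s}(X)\cap\Bir^{\#}(X)$ acts trivially both on $T(X)$ and on $\NS(X)$, hence on the finite-index sublattice $T(X)\oplus\NS(X)$, and therefore on the whole of $H^{2}(X,\IZ)$.

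The key step is then to show that the group of birational automorphisms acting trivially on $H^{2}(X,\IZ)$ is finite. For this I would argue that such an $f$, having $f^{*}=\id$, sends a K\"ahler class to a K\"ahler class, so by the Torelli-type biregularity criterion for hyperk\"ahler birational maps (Theorem~\ref{thm:Markman}, \emph{cf.}~also \cite{HuybrechtsTorelli}) it extends to a biregular automorphism; these automorphisms act trivially on $H^{2}(X,\IZ)$ and hence form a finite group by \cite[Proposition~9.1]{HuyInventiones}. This is the point I expect to be the main obstacle, since $\Bir(X)$ a priori contains genuinely non-regular maps, and unlike in the proof of Proposition~\ref{prop:KAutkernelfinite} one must upgrade the cohomological triviality to actual regularity before Huybrechts' finiteness result applies.

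Finally, exactly as in Proposition~\ref{prop:KAutkernelfinite}, the character $\Bir(X)\to\GL(H^{2,0}(X))=\IC^{*}$ given by the action on the symplectic form has finite cyclic image (the analogue of \cite[Corollary~\textbf{3}.3.4]{HuyK3}, its image being generated by a root of unity because the eigenvalue is an algebraic integer arising from the action on the integral lattice $T(X)$), with kernel $\Bir^{s}(X)$. Thus $\Bir^{\#}(X)/(\Bir^{s}(X)\cap\Bir^{\#}(X))$ embeds into the finite cyclic group $\Bir(X)/\Bir^{s}(X)$, and combined with the finiteness of $\Bir^{s}(X)\cap\Bir^{\#}(X)$ established above this yields the finiteness of $\Bir^{\#}(X)$.
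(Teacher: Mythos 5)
Your proof is correct, but it takes a genuinely longer route than the paper's, which is a two-line reduction: once one knows (as you establish in your first step) that every $f\in\Bir^{\#}(X)$ acts trivially on $\NS(X)$, it fixes in particular an ample class, which is a K\"ahler class on the projective manifold $X$; Theorem~\ref{thm:Markman} then yields at once $\Bir^{\#}(X)\subseteq\Aut(X)\cap\KAut^{\#}(X)$, and Proposition~\ref{prop:KAutkernelfinite} finishes the proof. You instead postpone the biregularity upgrade to the smaller subgroup of maps acting trivially on all of $H^{2}(X,\IZ)$ and re-run the internal argument of Proposition~\ref{prop:KAutkernelfinite} for $\Bir(X)$: the symplectic subgroup $\Bir^{s}(X)$, triviality on the finite-index sublattice $T(X)\oplus\NS(X)$, Huybrechts' finiteness result \cite[Proposition~9.1]{HuyInventiones}, and the finite cyclic image of $\Bir(X)\to\GL(H^{2,0}(X))$. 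This is valid --- and note that you and the paper make exactly the same reading of Theorem~\ref{thm:Markman}, which as literally stated only produces \emph{some} automorphism $g$ with $g^{*}=f^{*}$; upgrading this to the biregularity of the given $f$ rests on the standard Huybrechts--Fujiki fact that a bimeromorphic map of hyperk\"ahler manifolds whose pullback preserves a K\"ahler class is an isomorphism --- but it forces you to supply one extra ingredient that the paper's shortcut avoids entirely, namely the finiteness of $\Bir(X)/\Bir^{s}(X)$, the $\Bir$-analogue of \cite[Corollary~\textbf{3}.3.4]{HuyK3}. That fact is indeed true for projective $X$ and standard, but your parenthetical justification is too quick: being an algebraic integer arising from an integral isometry does not force a root of unity (eigenvalues of isometries of indefinite lattices can be Salem numbers, and for non-projective $X$ the image can in fact be infinite, cf.~\cite{Oguiso}); one needs projectivity via the compactness argument, namely that a Hodge isometry of the irreducible polarizable Hodge structure on $T(X)$ preserves the decomposition of $T(X)_{\IR}$ into the positive-definite plane $(H^{2,0}\oplus H^{0,2})\cap T(X)_{\IR}$ and the negative-definite part $T(X)_{\IR}\cap H^{1,1}(X)$, so the group of such isometries is both compact and discrete, hence finite. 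With that point repaired your argument is complete; what the paper's route buys is brevity, while yours makes explicit that Proposition~\ref{prop:KAutkernelfinite} holds verbatim with $\Aut$ replaced by $\Bir$.
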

\begin{proof}
As the cone $\MV(X)$ is also open in $\NS(X)_{\IR}$, any $f \in \Bir^{\#}(X)$ acts trivially on $\NS(X)$. It then follows from Theorem \ref{thm:Markman} that $\Bir^{\#}(X) \subseteq \Aut(X) \cap \KAut^{\#}(X)$, which is then finite by Proposition \ref{prop:KAutkernelfinite}.
\end{proof}

Here comes the key point of the proof. It concerns $\KAut^{*}(X)$ and $\Bir^{*}(X)$, which are the images of the homomorphisms $\KAut(X) \longrightarrow \Aut(\cA(X))$ and $\Bir(X) \longrightarrow \Aut(\MV(X))$).

\begin{prop}\label{prop:key}
Let $X$ be a projective hyperk\"ahler manifold. Then there are only finitely many conjugacy classes of finite subgroups of $\KAut^{*}(X)$  and $\Bir^*(X)$.
\end{prop}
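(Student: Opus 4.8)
The plan is to treat the two groups uniformly: write $\Gamma$ for either $\KAut^*(X)$ acting on $C=\cA(X)$, or $\Bir^*(X)$ acting on $C=\MV(X)$. In both cases $\Gamma$ is a subgroup of $O(\NS(X))$ preserving the integral lattice $\NS(X)$, the hyperbolic form of signature $(1,\rho(X)-1)$, and the open cone $C$ (Remark~\ref{rem:Isometry}, Lemma~\ref{lemma:dagger}); and by Theorem~\ref{thm: m-k conj for three groups} the $\Gamma$-action on the rational closure $C^+$ (Definition~\ref{def:RationalClosure}) admits a rational polyhedral fundamental domain, which I denote $\Sigma$ (this is $\Sigma$ in case (3) and $\Delta$ in case (1) of that theorem). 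Being a subgroup of the integral orthogonal group of a hyperbolic lattice, $\Gamma$ is discrete and acts properly discontinuously on the interior cone $C$, equivalently on the hyperbolic space $\IP(C)$.

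First I would produce, for every finite subgroup $H\subseteq\Gamma$, a fixed point in the interior: choosing any $x_0\in C$ and averaging, the class $\bar x:=\frac{1}{|H|}\sum_{h\in H} h\cdot x_0$ lies in $C$ (a convex combination of points of the open convex cone $C$) and is fixed by $H$. Since $\bigcup_{\gamma\in\Gamma}\gamma\cdot\Sigma=C^+\supseteq C$, there is $\gamma_0\in\Gamma$ with $\gamma_0^{-1}\bar x\in\Sigma$; replacing $H$ by the conjugate subgroup $\gamma_0^{-1}H\gamma_0$, which fixes $\gamma_0^{-1}\bar x\in\Sigma\cap C$, I may assume from the start that $H$ fixes a point $\bar x\in\Sigma\cap C$, so that $H\subseteq\Stab_\Gamma(\bar x)$.

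The key reduction is then the finiteness of the set of \emph{neighbours meeting the interior},
\[S:=\set{\gamma\in\Gamma\st \gamma\cdot\Sigma\cap\Sigma\cap C\neq\varnothing},\]
which I expect to be the main obstacle. Granting it, the proof concludes at once: for $\gamma\in\Stab_\Gamma(\bar x)$ one has $\bar x=\gamma\cdot\bar x\in\gamma\cdot\Sigma\cap\Sigma\cap C$, so $H\subseteq\Stab_\Gamma(\bar x)\subseteq S$; thus every finite subgroup of $\Gamma$ is conjugate to a subset of the \emph{fixed} finite set $S$, and there are only finitely many such subsets, hence only finitely many conjugacy classes of finite subgroups (equivalently, $|H|\le|S|$ together with the resulting cohomological finiteness feeds into Lemma~\ref{lemma:FiniteConj}).

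To prove $S$ finite I would combine the Siegel property with proper discontinuity. Applying Proposition~\ref{prop: Siegel} with $\Pi_1=\Pi_2=\Sigma$ shows that the collection $\set{\gamma\cdot\Sigma\cap\Sigma\st\gamma\in\Gamma}$ of subsets is finite; let $T_1,\dots,T_k$ be those members which meet the open cone $C$, and fix a point $p_i\in T_i\cap C$ for each. Every $\gamma$ with $\gamma\cdot\Sigma\cap\Sigma=T_i$ satisfies $p_i\in\gamma\cdot\Sigma$. The hard part is precisely here: since $p_i$ is an \emph{interior} point of $C$ and $\Gamma$ acts properly discontinuously there, the tiling $\set{\gamma\cdot\Sigma}_{\gamma\in\Gamma}$ is locally finite at $p_i$, so only finitely many translates contain $p_i$; hence only finitely many $\gamma$ produce a given $T_i$, and $S$ is finite. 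The subtlety that must be controlled is that $S$ would be genuinely \emph{infinite} without the restriction $\cap\,C$: infinitely many translates $\gamma\cdot\Sigma$ can share a rational isotropic boundary ray (a cusp) of $C^+$, so that the mere collection being finite does not bound the number of group elements. It is exactly the passage to the interior cone $C$ — where the averaged fixed point $\bar x$ lives and where the action is properly discontinuous, hence the fundamental domain is locally finite — that rescues the finiteness. The case of $\Bir^*(X)$ on $\MV(X)$ is verbatim.
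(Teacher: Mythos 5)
Your proposal follows the same architecture as the paper's proof: average over the finite subgroup to produce a fixed point in the open cone, use the fundamental domain $\Sigma$ from Theorem \ref{thm: m-k conj for three groups} to conjugate the subgroup so that it fixes a point of $\Sigma$, observe that the conjugated subgroup then lies in a fixed set of ``neighbours'' of $\Sigma$, prove that set is finite via the Siegel property (Proposition \ref{prop: Siegel}), and conclude because a fixed finite set has only finitely many subsets. Where you deviate is in the definition of the neighbour set and in the finiteness argument: the paper takes $\cS=\set{\varphi \st \varphi(\Sigma)\cap\Sigma\neq\set{0}}$ and argues that each such $\varphi(\Sigma)$ shares one of the finitely many rays of $\Sigma$, with finitely many translates per ray; you instead keep only those $\gamma$ with $\gamma\Sigma\cap\Sigma\cap C\neq\varnothing$. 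Your cusp remark is well taken, and it in fact bites the paper's own write-up rather than yours: if $\Gamma$ contains a parabolic element fixing a rational isotropic ray $r$ of $\Sigma$ (for instance automorphisms preserving a Lagrangian fibration, where the stabilizer of the fibre class can be infinite), then the infinitely many distinct translates $\gamma\Sigma$ with $\gamma\in\Stab_\Gamma(r)$ all contain $r$, so the unrestricted set $\cS$ can be infinite and ray-counting fails exactly at isotropic rays; the argument is rescued precisely as you do it, by remembering that the averaged fixed point lies in the open cone. (Note also that a nonzero intersection $\varphi(\Sigma)\cap\Sigma$ need not contain a ray of $\Sigma$ in the sense of a one-dimensional \emph{face}; restricting to interior points sidesteps this as well.)

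However, your justification of the step you yourself flag as the hard part is incomplete. That ``$\Gamma$ acts properly discontinuously on $C$, hence the tiling $\set{\gamma\Sigma}$ is locally finite at an interior point $p_i$'' is not a formal consequence of proper discontinuity, because the tiles are not compact in the projective model: $\IP(\Sigma)$ touches the boundary sphere at its rational isotropic rays, and $p_i\in\gamma\Sigma$ means $\gamma^{-1}p_i\in\Sigma$, so a priori infinitely many orbit points of $p_i$ could lie in $\Sigma$, accumulating at an ideal vertex --- proper discontinuity only controls $\set{\gamma \st \gamma K\cap K\neq\varnothing}$ for $K$ \emph{compact}. The claim is true, but needs an argument, for example: choose a rational polyhedral cone $\Pi$ with $p_i$ in its interior and $\Pi\subseteq C\cup\set{0}$, so that $\IP(\Pi)$ is a compact subset of the hyperbolic space $\IP(C)$. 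If $\gamma^{-1}p_i\in\Sigma$, then $\gamma^{-1}\Pi\cap\Sigma\neq\set{0}$, and by the Siegel property (now applied to $\Pi_1=\Pi$, $\Pi_2=\Sigma$) these intersections take only finitely many values; if $\gamma_1,\gamma_2$ give the same value $T$, pick $c\in T\cap C$ (such $c$ exists since $\gamma^{-1}p_i\in C$): then $\gamma_1 c,\gamma_2 c\in\Pi\cap C$, so $\gamma_2\gamma_1^{-1}$ lies in $\set{\delta\st \delta\Pi\cap\Pi\cap C\neq\varnothing}$, which \emph{is} finite by proper discontinuity on the compact set $\IP(\Pi)$ (here $\Gamma$ is discrete in $\GL(\NS(X)_\IR)$ because it preserves the lattice $\NS(X)$). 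This bounds the fibres of $\gamma\mapsto\gamma^{-1}\Pi\cap\Sigma$ and hence makes your set $S$ finite. With that patch your proof is complete, the conclusion via Lemma \ref{lemma:FiniteConj} is as in the paper, and the case of $\Bir^*(X)$ acting on $\MV(X)$ is indeed verbatim.
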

\begin{proof}
Let $G$ be a finite subgroup of $\KAut^{*}(X)$. Fix a rationally polyhedral fundamental domain $\Sigma$ for the action of $\KAut^{*}(X)$ on $\cA^{+}(X)$, whose existence is proved in Theorem \ref{thm: m-k conj for three groups}.  First of all, we observe that there exists a point $x \in \cA(X)$ such that $g.x = x$ for every $g \in G$. Indeed, $x = \sum_{g \in G} g.y$ for any point $y \in \cA(X)$ will work. Hence there exists $h \in \KAut(X)$ such that $x_0 = h^\dagger(x) \in \Sigma$. It follows that for every $g \in G$ we have
\[h^\dagger \circ g \circ (h^{-1})^\dagger(x_0) = h^\dagger \circ g\circ  (h^{-1})^\dagger\circ h^\dagger(x) = h^\dagger x = x_0,\]
\emph{i.e.}~the element $h^\dagger \circ g \circ (h^{-1})^\dagger$ fixes $x_0$ for every $g \in G$. This means that the subgroup $h^\dagger G {h^\dagger}^{-1}$ acts on $\cA(X)$ and fixes a point of $\Sigma$. Therefore
\[h^\dagger G {h^\dagger}^{-1} \subseteq \{ \varphi \in \KAut^*(X) \,|\, \varphi(\Sigma) \cap \Sigma \neq \{ 0 \} \}=:\cS.\]
We claim that $\cS$  is a finite set. By definition, for any $\varphi\in \cS$, $\varphi(\Sigma)$ and $\Sigma$ share at least a ray. On one hand, $\Sigma$ has only finitely many rays\,; and on the other hand, for each ray of $\Sigma$, there are only finitely many translates of $\Sigma$ by $\KAut^{*}(X)$ sharing it, thanks to the Siegel property (Proposition \ref{prop: Siegel}). Therefore  $\{ \varphi(\Sigma)\,|\, \varphi \in \cS\}$ is a finite set, which implies the finiteness of $\cS$ since $\Sigma$ is a fundamental domain.  In conclusion, any finite subgroup of $\KAut^{*}(X)$ can be conjugated to be contained in some given finite set $\cS$, which admits of course only finitely many subsets.

The proof for $\Bir^*(X)$ is exactly the same, provided we replace $\KAut^*(X)$, $\cA(X)$, $\cA^+(X)$ and $\Sigma$ by $\Bir^*(X)$, $\MV(X)$, $\MV^+(X)$ and $\Delta$ respectively.
\end{proof}

\begin{proof}[Proof of Theorem~\ref{main2-proj}]
We address first the group $\KAut(X)$. Recall that we have a short exact sequence \eqref{eqn:BreakKAut}\,:
\begin{equation*}
 1\longrightarrow \KAut^{\#}(X)\longrightarrow\KAut(X)\longrightarrow\KAut^{*}(X)\longrightarrow 1.
\end{equation*}
By Lemma~\ref{lemma:FiniteConj} and Proposition~\ref{prop:key}, we have the following two facts\,:
\begin{enumerate}
\item[(1*)] The cardinalities of finite subgroups of $\KAut^{*}(X)$ are bounded.
\item[(2*)] For any finite group $G$, the set $H^{1}(G, \KAut^{*}(X))$ is finite, where $G$ acts on $\KAut^{*}(X)$ trivially.
\end{enumerate}
Again by Lemma~\ref{lemma:FiniteConj}, it is enough to establish these two properties for $\KAut(X)$\,: 
\begin{enumerate}
\item The cardinalities of finite subgroups of $\KAut(X)$ are bounded.
\item For any finite group $G$, the set $H^{1}(G, \KAut(X))$ is finite, where $G$ acts on $\KAut(X)$ trivially.
\end{enumerate}
For (1), let $G$ be any finite subgroup of $\KAut(X)$, then $G\cap \KAut^{\#}(X)$ is a finite group of bounded cardinality by Proposition~\ref{prop:KAutkernelfinite} and $G/G\cap \KAut^{\#}(X)$ is a finite subgroup of $\KAut^{*}(X)$, hence has bounded cardinality by (1*). Therefore, the cardinality of $G$ is bounded.\\
For (2), fix any finite group $G$, the short exact sequence \eqref{eqn:BreakKAut}, with trivial $G$-actions, induces an exact sequence of pointed sets (where the first map is injective by using Remark \ref{rmk:fiberbis}, but we do not need this here)\,:
\begin{equation*}
H^{1}(G, \KAut^{\#}(X))\longrightarrow H^{1}(G, \KAut(X))\longrightarrow H^{1}(G, \KAut^{*}(X)).
\end{equation*}
The third term being finite (thanks to (2*)), the finiteness of the middle term is equivalent to the finiteness of the fibers of the second map, which by Lemma~\ref{lemma:twist} is implied by the finiteness of the cohomology sets $H^{1}(G, \KAut^{\#}(X)_{\phi})$ for all $\phi\in Z^{1}(G, \KAut(X))$, where $\KAut^{\#}(X)_{\phi}$ is the group $\KAut^{\#}(X)$ endowed with a $G$-action twisted by the $1$-cocycle $\phi$. As $\KAut^{\#}(X)_{\phi}$ is in any way a finite $G$-group, $H^{1}(G, \KAut^{\#}(X)_{\phi})$ is obviously finite by definition. The proof is therefore complete.

As $\Aut(X)$ has finite index in $\KAut(X)$, the result for $\Aut(X)$ follows from Lemma \ref{lemma:FiniteConj}.

Finally, the proof for $\Bir(X)$ is the exactly same as the one for $\KAut(X)$ by replacing $\KAut^{\#}(X)$, $\KAut^{*}(X)$ and Proposition \ref{prop:KAutkernelfinite} by $\Bir^{\#}(X)$, $\Bir^{*}(X)$ and Corollary \ref{cor:BirKerFinite} respectively. 
\end{proof}

\begin{rem}
The proof of the key Proposition \ref{prop:key} actually provides a bound for the orders of finite subgroups of $\KAut^{*}(X)$ (resp.~$\Bir^*(X)$), namely $|\cS|$, the number of translates of the fundamental domain $\Sigma$ (resp.~$\Delta$) that share at least a ray with $\Sigma$ (resp.~$\Delta$). This would lead a bound for the orders of finite subgroups of $\KAut(X)$ and $\Bir(X)$. Let $G \leq \KAut(X)$ be a finite subgroup, and call $G^\#$ the subgroup of $G$ consisting of those Klein automorphisms acting trivially on $\NS(X)$ and $G^*$ the image of $G$ in $O(\NS(X))$. Then we have
\[|G| = |G^\#| |G^*| \leq |\cS| \cdot 2 |G^\# \cap \Aut(X)|,\]
so to find a bound for $|G|$ we just need to bound $|G^\# \cap \Aut(X)|$. Looking now at the action of elements in this group on the transcendental lattice $T(X)$ and arguing as in the proof of Proposition \ref{prop:KAutkernelfinite}, we see that we can bound the cardinality of the subgroup $\{g \in G^\# \cap \Aut(X) \,|\, g^*|_{H^{2, 0}(X)} = \id \}$ by the cardinality of the group $\{ g \in \Aut(X) \,|\, g^*|_{H^2(X, \IZ)} = \id \}$, which as we mentioned is finite and depends only on the deformation type of $X$. On the other hand, the quotient group naturally embeds in $\Aut(X) / \Aut^s(X)$, which is a finite cyclic group of order say $m$. It is known by \cite[Proposition 7]{Beauville2} that $\varphi(m) \leq b_2(X) - \rho(X) \leq b_2(X) - 1$, hence also $m$ can be bounded by a constant which depends only on the deformation type of $X$.
\end{rem}

\section{Proof of Theorem \ref{main2'} in the non-projective case}\label{sect:Non-proj}
The non-projective case of Theorem \ref{main2'} is the following\,:
\begin{theorem}\label{main2-non-proj}
Let $X$ be a non-projective compact hyperk\"ahler manifold. Then there are only finitely many conjugacy classes of finite subgroups of $\KAut(X)$, $\Aut(X)$ and $\Bir(X)$.
\end{theorem}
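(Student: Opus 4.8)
The plan is to exploit Oguiso's structural result (Theorem~\ref{thm:Oguiso}) together with the general algebraic machinery developed in \S\ref{sect: group cohomology}, rather than appealing to the cone conjectures as in the projective case. The starting observation is that in the non-projective setting $\Bir(X)$ is an \emph{almost abelian group} of finite rank, that is, it sits in a filtration by normal subgroups with finite and finitely-generated-abelian subquotients. This is precisely the hypothesis of the Filtration Lemma (Lemma~\ref{lemma:filtration}), so the real work is to upgrade Oguiso's statement about $\Bir(X)$ (and hence $\Aut(X)$, which is a subgroup of finite index) to the larger group $\KAut(X)$, and then to check that the relevant filtrations have the required normality and preservation properties.

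First I would handle $\Bir(X)$ and $\Aut(X)$. Since $\Bir(X)$ is almost abelian of rank $r$, by definition there is a finite normal subgroup $N$ such that $\Bir(X)/N$ contains $\IZ^r$ as a finite-index subgroup; unwinding this gives a finite filtration by normal subgroups of $\Bir(X)$ whose successive quotients are either finite or finitely generated abelian. Lemma~\ref{lemma:filtration} then immediately yields that $\Bir(X)$ has only finitely many conjugacy classes of finite subgroups. The same filtration restricts to $\Aut(X)$ (or one applies the last assertion of Lemma~\ref{lemma:FiniteConj} using that $\Aut(X)$ has finite index in $\Bir(X)$ in this regime), giving the conclusion for $\Aut(X)$.

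The remaining and genuinely new point is $\KAut(X)$. Here I would use the exact sequence \eqref{eqn:LES} of Lemma~\ref{lemma:Index2}, which exhibits $\Aut(X)$ as a normal subgroup of $\KAut(X)$ of index at most $2$. If the index is $1$ there is nothing more to prove. If the index is $2$, then $\Aut(X)$ is a finite-index subgroup of $\KAut(X)$, so by the final clause of Lemma~\ref{lemma:FiniteConj} the property of having finitely many conjugacy classes of finite subgroups passes from $\Aut(X)$ up to $\KAut(X)$. Concretely, condition (1) of Lemma~\ref{lemma:FiniteConj} (boundedness of cardinalities) passes to any overgroup in which $\Aut(X)$ has finite index because a finite subgroup $G \subseteq \KAut(X)$ meets $\Aut(X)$ in a subgroup of bounded index at most $2$; and condition (2), the finiteness of $H^1(G,\KAut(X))$ for every finite $G$, is exactly what the finite-index argument in Lemma~\ref{lemma:FiniteConj} delivers via the exact sequence of pointed sets and the finiteness of the $G$-set $\KAut(X)/\Aut(X)$.

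\textbf{The main obstacle} I anticipate is not in the group cohomology—that is essentially formal once the filtration is in place—but in ensuring that the extension from $\Bir(X)$ to $\KAut(X)$ is set up coherently. One must be careful that $\KAut(X)$ is finitely generated (so that $\Aut(X)$ really is of finite index and Lemma~\ref{lemma:FiniteConj}'s last clause applies), and that the almost-abelian filtration of $\Bir(X)$ can be intersected with $\Aut(X)$ while retaining normality in the ambient group, an issue flagged explicitly in Remark~\ref{rm:NormalityPb}. Provided these normality and finite-index points are verified, the theorem follows by combining Lemma~\ref{lemma:filtration}, Lemma~\ref{lemma:FiniteConj}, and Theorem~\ref{thm:Oguiso}.
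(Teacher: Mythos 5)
Your reduction for $\KAut(X)$ misreads Lemma~\ref{lemma:FiniteConj}: its final clause transfers the property \emph{downward}, from a group to its finite-index subgroups, whereas you apply it \emph{upward}, from $\Aut(X)$ to $\KAut(X)$. The upward direction is not what the finite-index machinery delivers. The exact sequence of pointed sets, together with the twisting of Remark~\ref{rmk:fiberbis}, bounds the \emph{fibres} of the map $H^{1}(G,\Aut(X))\to H^{1}(G,\KAut(X))$, but this map is in general far from surjective, so finiteness of $H^{1}(G,\Aut(X))$ plus finiteness of the fibres says nothing about the classes outside its image. With trivial $G$-action, the classes your argument cannot see are precisely the conjugacy classes of finite subgroups of $\KAut(X)$ containing anti-holomorphic elements --- in particular all real structures, the main objects the theorem is meant to control. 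Moreover the upward implication is false for abstract groups: there exist groups with a torsion-free subgroup of finite index but infinitely many conjugacy classes of finite subgroups (Leary--Nucinkis), so no purely formal finite-index bootstrap from $\Aut(X)$ to $\KAut(X)$ can work. The workable version of your strategy is to apply Lemma~\ref{lemma:filtration} once, \emph{directly to $\KAut(X)$}, using a filtration all of whose terms are normal in $\KAut(X)$ itself and whose top layer is the quotient $\KAut(X)/\Aut(X)$ of order at most $2$.

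The $\Bir(X)$ step is also not the routine unwinding you describe, and the paper explicitly warns against it: in Oguiso's definition the finite kernel $K$ is normal only in a finite-index subgroup $G^{(0)}$ of $\Bir(X)$, not in $\Bir(X)$ as you assert, so the resulting filtration does not satisfy the normality hypothesis of Lemma~\ref{lemma:filtration} as it stands. This is exactly the pitfall of Remark~\ref{rm:NormalityPb}, and the remark following the paper's proof of Theorem~\ref{main2-non-proj} states that the authors were unable to deduce the theorem from Theorem~\ref{thm:Oguiso} for this very reason. (Your filtration can in fact be normalized --- replace $G^{(0)}$ by its normal core in $\KAut(X)$ and observe that the finite kernel is the full torsion subgroup of that core, hence characteristic in it and therefore normal in $\KAut(X)$ --- but this argument is absent from your proposal, and your parenthetical claim that $\Aut(X)$ has finite index in $\Bir(X)$ is nowhere justified.) The paper instead bypasses Oguiso's structure theorem as a black box: for $A=\Aut(X)$ or $\Bir(X)$ it filters $A$ by the kernels of the actions on the radical $R$ of $\NS(X)$, on $\NS(X)/R$, on $\NS(X)$, on $H^{2,0}(X)$ and on $H^{2}(X,\IZ)$. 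These subgroups are cut out by intrinsic conditions, hence are visibly normal in the ambient group $\KAut(X)$; the subquotients are finite or finitely generated abelian by elementary lattice theory, by Lemma~\ref{lemma:TNSvsH2}, by Oguiso's computation of the image of $\Bir(X)$ in $\GL(H^{2,0}(X))\simeq\IC^{*}$, and by Huybrechts' finiteness of automorphisms acting trivially on $H^{2}$; and then a single application of Lemma~\ref{lemma:filtration} treats $\Aut(X)$, $\Bir(X)$ and $\KAut(X)$ simultaneously.
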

Recall that by Remark~\ref{rem:bridge} and Lemma~\ref{lemma:FiniteConj}, the non-projective case of Theorem~\ref{main2} is a consequence of Theorem \ref{main2-non-proj}.

Although the BBF lattice $H^{2}(X, \IZ)$ is non-degenerate of signature $(3, b_{2}(X)-3)$ (\emph{cf.}~\S \ref{sect: action on BBF lattice}), its restriction to the N\'eron--Severi lattice $\NS(X)$ has three possibilities in general (\emph{cf.}~\cite{Oguiso})\,:
\begin{enumerate}
\item a \emph{hyperbolic} lattice of signature $(1, 0, \rho - 1)$,
\item an \emph{elliptic} lattice of signature $(0, 0, \rho)$,
\item a \emph{parabolic} lattice of signature $(0, 1, \rho- 1)$,
\end{enumerate}
where $\rho=\rho(X)$ is the Picard rank of $X$. It is a theorem of Huybrechts \cite[Thm.~3.11]{HuyInventiones} and \cite{HuyErratum} that the projectivity of $X$ is equivalent to the first case that $\NS(X)$ is hyperbolic.

Let $X$ be a non-projective compact hyperk\"ahler manifold in the sequel of this section. Hence $\NS(X)$ with the restriction of the BBF form $q$, is either elliptic or parabolic. Let 
$$R:=\ker(q|_{\NS(X)})$$ be the \emph{radical} of $\NS(X)$, which is either trivial or isomorphic to $\IZ$.

\begin{lemma}\label{lemma:TNSvsH2}
Let $f$ be a birational automorphism of a compact hyperk\"ahler variety $X$. If $f$ acts trivially on $\NS(X)$ and $H^{2,0}(X)$, then it acts trivially on the whole $H^{2}(X, \IZ)$.
\end{lemma}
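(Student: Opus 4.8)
The plan is to move everything onto the second cohomology lattice and show that the operator $N := f^{*} - \id$ vanishes on $H^{2}(X,\IZ)$. The starting point, which I would invoke at the outset, is the standard fact that a birational automorphism of a compact hyperk\"ahler manifold is an isomorphism in codimension one and hence induces an isometry $f^{*}$ of the BBF form $q$ on $H^{2}(X,\IZ)$ respecting the Hodge decomposition (\emph{cf.}~the discussion around Theorem~\ref{thm:Markman}; this is already used in Corollary~\ref{cor:BirKerFinite}). Since by hypothesis $f^{*}$ acts as the identity on $H^{2,0}(X)=\IC\eta$, we get $f^{*}\eta=\eta$, and as $f^{*}$ is defined over $\IR$ it fixes $\bar\eta$ as well. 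Thus $N$ annihilates $\eta$, $\bar\eta$, and (by the second hypothesis) all of $\NS(X)$.

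The heart of the argument is to pin down $\im(N)$ via two orthogonality computations. First I would show $\im(N)\subseteq\NS(X)$: for any $x$, the isometry property together with $f^{*}\eta=\eta$ gives $q(Nx,\eta)=q(f^{*}x,f^{*}\eta)-q(x,\eta)=0$, and likewise $q(Nx,\bar\eta)=0$, so $\im(N)_{\IC}$ lies in $\langle\eta,\bar\eta\rangle^{\perp}=H^{1,1}(X)$, forcing $\im(N)\subseteq H^{1,1}(X)\cap H^{2}(X,\IZ)=\NS(X)$. Running the same computation against $y\in\NS(X)$, where now $f^{*}y=y$, yields $q(Nx,y)=0$, so $\im(N)\perp\NS(X)$ as well. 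Combining the two inclusions places $\im(N)$ inside $\NS(X)\cap\NS(X)^{\perp}=R$, the radical of $q|_{\NS(X)}$.

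This reduces the statement to the nature of $R$. Whenever $q$ is non-degenerate on $\NS(X)$ (in particular in the elliptic case, and also in the projective/hyperbolic case) one has $R=0$, so $N=0$ immediately. The remaining — and main — difficulty is the parabolic case, where $R=\IZ r$ with $q(r,r)=0$; here $\im(N)\subseteq\IR r$ and $Nr=0$ since $r\in\NS(X)$, so $N=\ell(\,\cdot\,)\,r$ for a linear form $\ell$ on $H^{2}(X,\IR)$. To kill $\ell$ I would expand the isometry identity $q(f^{*}x,f^{*}x)=q(x,x)$, which (using $q(r,r)=0$) collapses to $2\,\ell(x)\,q(r,x)=0$ for every $x$. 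Since $q$ is non-degenerate and $r\neq0$, the functional $q(r,-)$ has a proper kernel, and because a real vector space is not the union of two proper subspaces, $\ell$ must vanish identically. Hence $N=0$ in every case and $f^{*}=\id$ on $H^{2}(X,\IZ)$. The step I expect to require the most care is precisely this parabolic case, together with the clean justification that $f^{*}$ is an \emph{isometric} Hodge automorphism of $H^{2}$; the elliptic case and the two orthogonality computations are then routine.
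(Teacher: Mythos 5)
Your proof is correct, and it takes a genuinely different route from the paper's. The paper first upgrades the hypothesis to triviality on the transcendental lattice $T(X)=\NS(X)^{\perp}$, invoking the irreducibility of its Hodge structure, and then handles the parabolic case by an explicit Gram-matrix computation: choosing a $\IZ$-basis $e_1,\ldots,e_t,v,h$ of $H^{2}(X,\IZ)$ adapted to the saturation of $\NS(X)+T(X)$, it shows that the matrix of any isometry restricting to the identity on that saturation is forced, block by block, to be the identity. You never pass through $T(X)$: working directly with $N=f^{*}-\id$, you use the isometry property together with $f^{*}\eta=\eta$ and $f^{*}\bar\eta=\bar\eta$ (the latter correctly justified by integrality of $f^{*}$) to place $\im(N)$ inside $H^{1,1}(X)\cap H^{2}(X,\IZ)=\NS(X)$ and simultaneously orthogonal to $\NS(X)$, hence inside the radical $R$ --- which settles the hyperbolic and elliptic cases in one stroke --- and in the parabolic case you write $N=\ell(\,\cdot\,)r$ and expand $q(f^{*}x,f^{*}x)=q(x,x)$ to get $2\,\ell(x)\,q(r,x)=0$, killing $\ell$ because a real vector space is not the union of two proper subspaces (alternatively, polarizing this identity and evaluating at a fixed $y_0$ with $q(r,y_0)\neq 0$ gives $\ell(y_0)=0$ and then $\ell\equiv 0$ directly). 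Your approach buys elementarity and uniformity: it needs no irreducibility statement for the transcendental Hodge structure (a point where the paper's citation, formulated for K3 surfaces, requires some care in the non-projective setting), no saturation, and no basis choices; what the paper's computation buys in exchange is the slightly more robust intermediate fact that \emph{any} lattice isometry of $H^{2}(X,\IZ)$ fixing $\NS(X)$ and $T(X)$ pointwise is the identity, with no Hodge-theoretic input beyond the first reduction, whereas your argument uses the Hodge compatibility of $f^{*}$ throughout. Both arguments rely on the same standard external input, which you flag appropriately: a birational automorphism of a compact hyperk\"ahler manifold induces an integral Hodge isometry of the BBF lattice.
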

\begin{proof}
The assumptions imply that $f$ acts trivially on the transcendental lattice $T(X):=\NS(X)^{\perp}$ (\emph{cf.}~\cite[Lemma~\textbf{3}.3.3 and the discussion after that]{HuyK3}). If $\NS(X)$ is (hyperbolic or) elliptic, as $T(X)\oplus\NS(X)$ has finite index in $H^{2}(X,\IZ)$, $f^{*}$ acts trivially on $H^{2}(X)$. 

If $\NS(X)$ is parabolic with radical $R$, then $\NS(X)_{\IQ}\cap T(X)_{\IQ}=R_{\IQ}$ and $\NS(X)_{\IQ}+ T(X)_{\IQ}$ is of codimension 1 in $H^{2}(X,\IQ)$. 
In this case, let $N = {(\NS X + T(X))^\perp}^\perp$ be the saturation; this is a primitive sublattice of $H^2(X, \IZ)$, having $\NS X + T(X)$ as finite index sublattice, and the quadratic form on $H^2(X, \IZ)$ restricts to a degenerate form on $N$ with kernel of rank $1$ generated by an element $v$. We can then find a $\IZ$-basis of $H^2(X, \IZ)$ of the form $e_1, \ldots, e_t, v, h$ with $e_1, \ldots, e_t, v$ a $\IZ$-basis for $N$.  With respect to this basis, the Gram matrix of the Beauville--Bogomolov--Fujiki form is
\[G = \left( \begin{array}{c|c|c}
G_0 & 0 & G_1\\
\hline
0 & 0 & v \cdot h\\
\hline
G_1^T & v \cdot h & h^2
\end{array} \right),\]
where $G_0$ is a $t \times t$ matrix, $G_1 \in \IZ^t$ and $G_1^T$ is its transpose. Observe that $\det G_0 \neq 0$ as $N$ has a rank $1$ kernel generated by $v$, and that $v \cdot h \neq 0$ for otherwise the pairing on $H^2(X, \IZ)$ would be degenerate.

Let now $\varphi$ be an isometry of $H^2(X, \IZ)$ which restricts to the identity on $\NS X$ and on $T(X)$, or equivalently which restricts to the identity on $N$. The matrix representing $\varphi$ with respect to our basis is of the form
\[M = \left( \begin{array}{c|c|c}
\id_t & 0 & x\\
\hline
0 & 1 & y\\
\hline
0 & 0 & z
\end{array} \right),\]
where $x \in \IZ^t$ and $y, z \in \IZ$. As this matrix is invertible, we have that $z = \pm 1$, and as it represents an isometry we must have $M^T G M = G$. A computation gives that
\[M^T G M = \left( \begin{array}{c|c|c}
G_0 & 0 & G_0 x \pm G_1\\
\hline
0 & 0 & \pm v \cdot h\\
\hline
x^T G_0 \pm G_1^T & \pm v \cdot h & x^T G_0 x \pm 2 G_1^T x \pm 2y v \cdot h + h^2
\end{array} \right),\]
where the sign $\pm$ refers to $z = \pm 1$. Since this matrix should equal $G$, from the fact that $v \cdot h \neq 0$ we deduce that $z = 1$. Hence, looking at the top right block we have $G_0 x + G_1 = G_1$, from which $x = 0$ as $\det G_0 \neq 0$. Finally, the bottom right block reads $2y v \cdot h + h^2$, and since this must equal $h^2$ and $v \cdot h \neq 0$, we have $y = 0$. But then $M$ is the matrix of the identity and we are done.
\end{proof}

\begin{proof}[Proof of Theorem~\ref{main2-non-proj}]
Notation is as before. 
The strategy is to apply Lemma~\ref{lemma:filtration} to natural filtrations of $\Aut(X)$ and $\Bir(X)$. In the following proof, let $A$ denote either $\Aut(X)$ or $\Bir(X)$.
Consider the following normal subgroups of $A$\,:
\begin{itemize}
\item $A_{1}:= \left\{f\in A ~\mid~ f^{*}|_{R}=\id\right\}$\,;
\item $A_{2}:= \left\{f\in A ~\mid~ f^{*}|_{R}=\id; f^{*}|_{\NS(X)/R}=\id\right\}$\,;
\item $A_{3}:= \left\{f\in A ~\mid~ f^{*}|_{\NS(X)}=\id\right\}$\,;
\item $A_{4}:= \left\{f\in A ~\mid~ f^{*}|_{\NS(X)}=\id; f^{*}|_{H^{2,0}(X)}=\id\right\}$\,;
\item $A_{5}:= \left\{f\in A ~\mid~ f^{*}|_{H^{2}(X)}=\id\right\}$,
\end{itemize}
which form a filtration\,:
$$1 \subseteq A_{5}\subseteq A_{4}\subseteq A_{3}\subseteq A_{2}\subseteq A_{1}\subseteq A.$$
Let us verify that the successive graded subquotients of this filtration satisfy the hypotheses of Lemma~\ref{lemma:filtration}, \emph{i.e.}~being finite or abelian of finite type\,:
\begin{itemize}
\item $A/A_{1}$ is a subgroup of $\Aut(R)$, which is either $\set{\pm 1}$ when $R$ is of rank $1$, or zero when $R$ is trivial. In any case, it is finite.
\item $A_{1}/A_{2}$ is by construction isomorphic to a subgroup of the automorphism group of the elliptic (\emph{i.e.}~negative definite) lattice $\NS(X)/R$, which is obviously a finite group.
\item $A_{2}/A_{3}$ is by construction isomorphic to a subgroup of $\Hom_{\IZ}(\NS(X)/R, R)$ which is a free abelian group of finite rank (possibly zero).
\item $A_{3}/A_{4}$ is by construction isomorphic to a subgroup of the image of $$\Bir(X)\to \GL(H^{2,0}(X))\simeq \IC^{*},$$ which is either $\IZ$ or trivial by Oguiso \cite[Theorem~2.4, Propositions~4.3, 4.4]{Oguiso}.
\item Finally, we have $A_{4}=A_{5}$ by Lemma~\ref{lemma:TNSvsH2}, and this last is finite by \cite[Proposition~9.1]{HuyInventiones}.
\end{itemize}
Therefore, we see that all graded pieces of the filtration are either finite or abelian of finite type, one can conclude for $\Aut(X)$ and $\Bir(X)$ by Lemma~\ref{lemma:filtration}. As for $\KAut(X)$, the above filtration for $A=\Aut(X)$ consists of normal subgroups of $\KAut(X)$ and $\KAut(X)/\Aut(X)$ is at most of order $2$, so Lemma~\ref{lemma:filtration} applies.
\end{proof}

\begin{rem}
Oguiso \cite{Oguiso} shows that for a non-projective compact hyperk\"ahler manifold $X$, its bimeromorphic automorphism group $\Bir(X)$ is \emph{almost abelian of finite rank}, that is, isomorphic to a finite-rank free abelian group, up to finite kernel and cokernel (see \cite[\S 8]{Oguiso}). Hence the same holds for $\Aut(X)$ and $\KAut(X)$. Unfortunately, we are not able to deduce our finiteness Theorem~\ref{main2-non-proj} from this very strong result. The issue is related to Remark~\ref{rm:NormalityPb} about the normality hypothesis in the filtration Lemma~\ref{lemma:filtration}. However, the authors believe that the subgroups appeared in the proof of Oguiso's theorem are indeed normal in $\KAut(X)$ and $\Bir(X)$. Moreover, if one is only interested in the finiteness of real structures, then Oguiso's theorem is enough\,: if a group $A$ is almost abelian of finite rank, then $H^{1}(\IZ/2\IZ, A)$ is finite, where $\IZ/2\IZ$ acts trivially on $A$.
\end{rem}

\section{Finiteness of real structures: proof of Theorem \ref{main1}}\label{sect:FiniteReal}
For a compact hyperk\"ahler manifold, assume that there exists at least one real structure (Definition~\ref{def:RealStr}). In this case, we have a splitting short exact sequence
\begin{equation}\label{eqn:sSES}
\xymatrix{
1\ar[r] &\Aut(X)\ar[r] &\KAut(X)\ar[r]^{\epsilon} & \set{\pm 1} \ar@/^1pc/[l] \ar[r] &1
}
\end{equation}
\begin{proof}[Proof of Theorem \ref{main1}]
Let us fix a real structure $\sigma$. By Lemma~\ref{lemma:CohomForRealStr}, we need to show the finiteness of the cohomology set $H^{1}(\IZ/2\IZ, \Aut(X))$, where $\Aut(X)$ is endowed with the action of conjugation by $\sigma$. The short exact sequence \eqref{eqn:sSES} induces an exact sequence of pointed sets\,:
\[\cdots\to \set{\pm 1}\to H^{1}(\IZ/2\IZ, \Aut(X))\to H^{1}(\IZ/2\IZ, \KAut(X))\to \cdots.\]
With $\set{\pm 1}$ being finite, it suffices, by \cite[Corollaire~1.13]{BorelSerre}, to show that the cohomology set  $H^{1}(\IZ/2\IZ, \KAut(X))$ is finite. However the action of $\IZ/2\IZ$ on $\KAut(X)$ is given by the conjugation by $\sigma$, \emph{i.e.}~an inner automorphism, so by \cite[Proposition~1.5]{BorelSerre}, $H^{1}(\IZ/2\IZ, \KAut(X))$ is in bijection with $H^{1}(\IZ/2\IZ, \KAut(X)_{\triv})$ where $\KAut(X)_{\triv}$ is endowed with the trivial $\IZ/2\IZ$-action. Finally, the complement of the base point (the trivial cocycle) in $H^{1}(\IZ/2\IZ, \KAut(X)_{\triv})$ is naturally identified with the set of conjugacy classes of subgroups of order $2$ in $\KAut(X)$, thus its finiteness is a special case of Theorem~\ref{main2}.
\end{proof}

\section{Finiteness properties of automorphism groups: proof of Theorem \ref{main3}}\label{sect:FiniteGeneration}

The goal of this section is to show some strong finiteness properties of the automorphism group and the birational automorphism group of a compact hyperk\"ahler variety (Theorem~\ref{main3}), namely finite presentation and $(\FP_{\infty})$ property.

Let us briefly recall various finiteness properties involved. Some standard references are \cite{Brown} and \cite{MR715779}.

\begin{definition}[Finiteness properties of groups \cite{Brown}]
Let $\Gamma$ be a group.
\begin{enumerate}
\item $\Gamma$ is called \emph{of type} $(\FL)$ (\emph{resp.~of length} $\leq n$) if the trivial $\IZ[\Gamma]$-module $\IZ$ has a finite resolution (\emph{resp.}~of length $n$) by free $\IZ[\Gamma]$-modules of finite rank\,:
\[\xymatrix{0 \ar[r] & \IZ[\Gamma]^{m_n} \ar[r] & \cdots \ar[r] & \IZ[\Gamma]^{m_1} \ar[r] &  \IZ[\Gamma]^{m_0} \ar[r] &  \IZ \ar[r] &  0}.\]
\item $\Gamma$ is said to be \emph{of type} $(\FP)$ (\emph{resp.~of length} $\leq n$) if the trivial $\IZ[\Gamma]$-module $\IZ$ admits a finite resolution (\emph{resp.}~of length $n$) by finitely generated projective $\IZ[\Gamma]$-modules\,:
\[\xymatrix{0 \ar[r] & P_n \ar[r] & \cdots \ar[r] & P_1 \ar[r] &  P_0 \ar[r] &  \IZ \ar[r] &  0}.\]
\item Let $n\in \IN$, we say that $\Gamma$ is \emph{of type} $(\FP_n)$ if the trivial $\IZ[\Gamma]$-module $\IZ$ has a length-$n$ partial resolution by finitely generated projective $\IZ[\Gamma]$-modules\,:
\[\xymatrix{P_n \ar[r] & \ldots \ar[r] & P_1 \ar[r] &  P_0 \ar[r] &  \IZ \ar[r] &  0}.\]
We say $\Gamma$ is \emph{of type} $(\FP_\infty)$ if it is \emph{of type} $(\FP_n)$ for all $n \geq 0$.
\item We say $\Gamma$ \emph{virtually} satisfies a property if it admits a finite-index subgroup satisfying this property. We can therefore define properties like \emph{virtual} $(\FL)$ and \emph{virtual} $(\FP)$, denoted by $(\VFL)$ and $(\VFP)$ respectively.
\end{enumerate}
 It follows from definitions that $\Gamma$ is of type $(\FP)$ if and only if $\Gamma$ is of type $(\FP_{\infty})$ and the ring $\IZ[\Gamma]$ is of finite cohomological dimension (\cite[Chapter VIII, Proposition 6.1]{Brown}). 
 For any $0\leq n\leq \infty$, a group $\Gamma$ is of type $(\FP_{n})$ is equivalent to the same condition for any finite-index subgroup (\cite[Chapter VIII, Proposition 5.1]{Brown}). Hence the ``virtual $(\FP_{n})$ property'' coincides with $(\FP_{n})$ itself and $(\VFP)$ implies $(\FP_{\infty})$.
\end{definition}

The following diagram in Figure \ref{eqn:Implications} summarizes some known implications  (\emph{cf.}~\cite[Chapter VIII]{Brown})\,:
\begin{figure}[H]
\begin{tikzpicture}
\node[draw,rectangle,text centered, rounded corners,text width=3em,minimum height=3em] (FL){$\FL$};
\node[draw,rectangle,text centered, rounded corners,text width=3em,minimum height=3em, below=1cm of FL] (VFL){$\VFL$};
\node[draw,rectangle,text centered, rounded corners,text width=3em,minimum height=3em, right=1cm of FL] (FP){$\FP$};
\node[draw,rectangle,text centered, rounded corners,text width=3em,minimum height=3em, below=1cm of FP] (VFP){$\VFP$};
\node[draw,rectangle,text centered, rounded corners,text width=3em,minimum height=3em, left=1cm of VFL] (Finite){Finite};
\node[draw,rectangle,text centered, rounded corners,text width=3em,minimum height=3em, right=1cm of VFP] (FPINFTY){$\FP_{\infty}$};
\node[draw=none, fill=none, text centered, text width=0.7em,minimum height=2em, below=0.5cm of FPINFTY] (blabla1){...};
\node[draw,rectangle,text centered, rounded corners,text width=3em,minimum height=3em, below=0.5cm of blabla1] (FPn){$\FP_{n}$};
\node[draw=none, fill=none, text centered, text width=0.7em,minimum height=2em, below=0.5cm of FPn] (blabla2){...};
\node[draw,rectangle,text centered, rounded corners,text width=3em,minimum height=3em, below=0.5cm of blabla2] (FP2){$\FP_{2}$};
\node[draw,rectangle,text centered, rounded corners,text width=3em,minimum height=3em, below=1cm of FP2] (FP1){$\FP_{1}$};
\node[draw,rectangle,text centered, rounded corners,text width=6em,minimum height=3em, right=1cm of FP2] (FPre){Finite presentation};
\node[draw,rectangle,text centered, rounded corners,text width=6em,minimum height=3em, right=1cm of FP1] (FG){Finite generation};
\draw[-latex',double](FL)--(VFL);
\draw[-latex',double](FL)--(FP);
\draw[-latex',double](FP)--(VFP);
\draw[-latex',double](VFL)--(VFP);
\draw[-latex',double](Finite)--(VFL);
\draw[-latex',double](VFP)--(FPINFTY);
\draw[-latex',double](FPINFTY)--(blabla1);
\draw[-latex',double](blabla1)--(FPn);
\draw[-latex',double](FPn)--(blabla2);
\draw[-latex',double](blabla2)--(FP2);
\draw[-latex',double](FP2)--(FP1);
\draw[-latex',double](FPre)--(FP2);
\draw[latex'-latex',double](FP1)--(FG);
\draw[-latex',double](FPre)--(FG);
\draw[-latex',double](FPINFTY)--node[right]{$cd< \infty$}(FP);
\end{tikzpicture}
\caption{Finiteness properties of groups}
\label{eqn:Implications}
\end{figure}
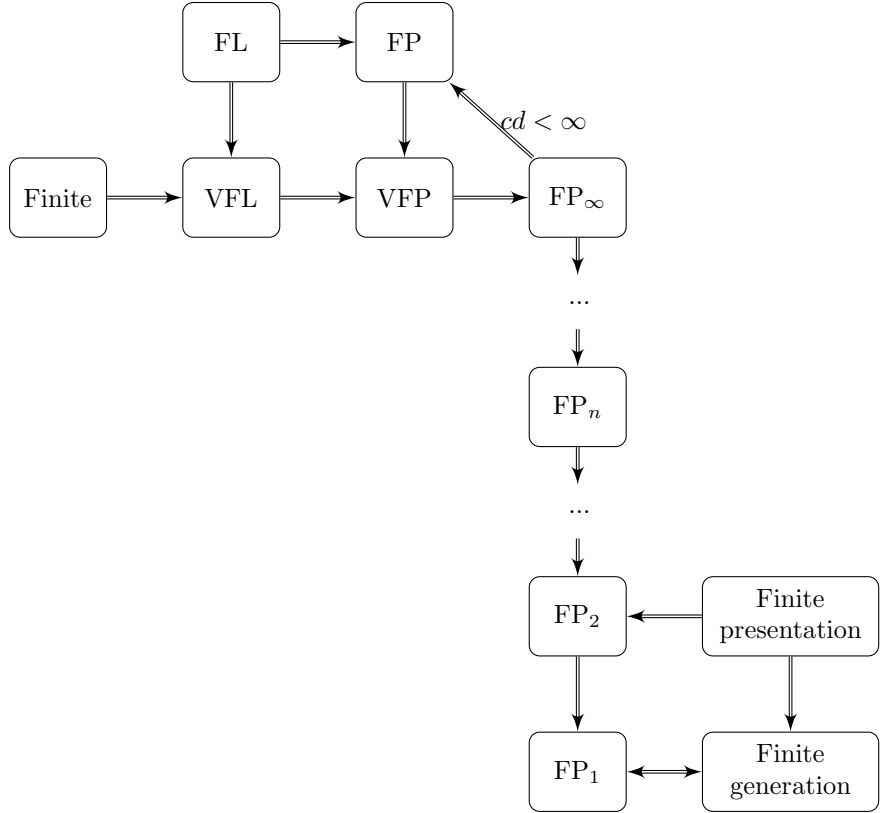

We will need the following fact\,:
\begin{prop}[Extensions {\cite[Proposition 2.7]{MR715779}}, {\cite[\S 10, Corollary 2]{MR1472735}}]\label{prop:Extension}
Let $\Gamma'$ be a normal subgroup of a group $\Gamma$ with quotient group $\Gamma''$\,:
$$0\to \Gamma'\to \Gamma\to \Gamma''\to 0.$$
\begin{itemize}
\item  Assume that $\Gamma'$ is of type $(\FP_{\infty})$. Then for any $n\in \IN\cup \{\infty\}$, $\Gamma$ is of type $(\FP_{n})$ if and only if $\Gamma''$ is so. In particular, the $(\FP_{\infty})$ property is preserved under extensions.
\item If $\Gamma'$ and $\Gamma''$ are both finitely presented, then so is $\Gamma$.
\end{itemize}
\end{prop}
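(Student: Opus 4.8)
The plan is to treat the two parts separately by standard homological algebra of group extensions, exploiting throughout two structural facts about $1 \to \Gamma' \to \Gamma \to \Gamma'' \to 1$: the group ring $\IZ[\Gamma]$ is free, hence faithfully flat, as a module over $\IZ[\Gamma']$, and there is a canonical isomorphism $\IZ[\Gamma] \otimes_{\IZ[\Gamma']} \IZ \cong \IZ[\Gamma'']$ of left $\IZ[\Gamma]$-modules, the target being inflated along $\Gamma \twoheadrightarrow \Gamma''$. I recall that $\Gamma$ is of type $(\FP_{n})$ exactly when the trivial module $\IZ$ admits a projective resolution over $\IZ[\Gamma]$ that is finitely generated in all degrees $\le n$, and that such a resolution may be taken free.

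First I would prove that if $\Gamma'$ is of type $(\FP_{\infty})$ and $\Gamma''$ is of type $(\FP_{n})$, then $\Gamma$ is of type $(\FP_{n})$, by gluing resolutions. Choose a free $\IZ[\Gamma'']$-resolution $C_\bullet \to \IZ$ with $C_q$ finitely generated for $q \le n$, and a free $\IZ[\Gamma']$-resolution $F_\bullet \to \IZ$ with every $F_p$ finitely generated. Applying $\IZ[\Gamma] \otimes_{\IZ[\Gamma']} (-)$ to $F_\bullet$ and using flatness together with the identification above turns $F_\bullet$ into a finitely generated free $\IZ[\Gamma]$-resolution of the inflation of $\IZ[\Gamma'']$; carrying this out simultaneously over each $C_q$ produces a first-quadrant double complex $D_{p,q}$ of finitely generated free $\IZ[\Gamma]$-modules whose $q$-th row resolves $\mathrm{infl}(C_q)$. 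The acyclic assembly lemma then shows that the total complex resolves $\IZ$ over $\IZ[\Gamma]$, and in each total degree $m \le n$ only the finitely many pairs $(p,q)$ with $p+q=m$ and $q\le n$ contribute, each $D_{p,q}$ finitely generated free; hence $\Gamma$ is of type $(\FP_{n})$.

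The converse implication is the heart of the matter: assuming $\Gamma$ of type $(\FP_{n})$ and $\Gamma'$ of type $(\FP_{\infty})$, I would deduce that $\Gamma''$ is of type $(\FP_{n})$. Here the efficient route is the homological criterion for $(\FP_{n})$ (commutation of the homology functors with arbitrary direct products up to degree $n$), fed into the Lyndon--Hochschild--Serre spectral sequence $E^{2}_{p,q} = H_{p}\big(\Gamma''; H_{q}(\Gamma'; -)\big) \Rightarrow H_{p+q}(\Gamma; -)$. The hypothesis that $\Gamma'$ is $(\FP_{\infty})$ guarantees the necessary finiteness of the coefficient systems $H_{q}(\Gamma'; -)$ and the good behaviour of the $E^{2}$-columns, which is exactly what lets one transport the $(\FP_{n})$ property from the abutment $\Gamma$ back to the base $\Gamma''$. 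The delicate bookkeeping — pinning down precisely which Tor-groups must commute with products in each total degree, and checking that the products and limits interact correctly with the filtration of the spectral sequence — is the main obstacle, and it is this analysis that I would quote from the cited homological references rather than redo in detail.

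For the second part I would argue combinatorially. Finite generation of $\Gamma$ is immediate, a generating set being given by lifts to $\Gamma$ of finite generators of $\Gamma''$ together with finite generators of $\Gamma'$. To obtain finitely many relations, start from finite presentations of $\Gamma'$ and of $\Gamma'' = \langle \bar x_{1}, \dots, \bar x_{s} \mid \bar r_{1}, \dots, \bar r_{t}\rangle$, lift each $\bar x_{i}$ to some $x_{i} \in \Gamma$, and collect three finite families of relations: the defining relations of $\Gamma'$; for each relator $\bar r_{j}$, a relation expressing its lift $r_{j} \in \Gamma'$ as a fixed word in the generators of $\Gamma'$; and, using normality of $\Gamma'$, for each generator $y$ of $\Gamma'$ and each $x_{i}^{\pm 1}$ a relation rewriting $x_{i}^{\pm 1} y x_{i}^{\mp 1}$ as a word in the generators of $\Gamma'$. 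A standard rewriting argument then shows that these finitely many relations present $\Gamma$, which completes the proof.
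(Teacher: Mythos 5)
Your proposal is sound, but be aware that the paper contains no proof of this proposition: it is quoted directly from the literature, the first bullet being exactly Bieri's Proposition 2.7 and the second the classical combinatorial fact on extensions of finitely presented groups. What you have written is, in effect, a correct reconstruction of those standard proofs. For the $(\FP_n)$ part, your division of labour is slightly asymmetric: the double-complex gluing handles only the ``if'' direction (and is fine, modulo the standard Cartan--Eilenberg-type care needed to lift the differentials of $C_\bullet$ to compatible, anticommuting horizontal differentials of $D_{\bullet,\bullet}$ --- a point you gesture at via the acyclic assembly lemma, and which is routine), while the converse is delegated to the Lyndon--Hochschild--Serre spectral sequence together with the Bieri--Eckmann product criterion. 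In fact the spectral-sequence mechanism alone yields both implications at once, with much less bookkeeping than you fear: taking coefficients $M=\prod_I\IZ[\Gamma]$, freeness of $\IZ[\Gamma]$ over $\IZ[\Gamma']$ gives $H_q(\Gamma';\IZ[\Gamma])=0$ for $q>0$ and $\IZ[\Gamma'']$ for $q=0$, and the $(\FP_\infty)$ hypothesis on $\Gamma'$ lets $H_q(\Gamma';-)$ pass through the direct product, so $E^2$ is concentrated in the row $q=0$ and $H_p(\Gamma;\prod_I\IZ[\Gamma])\cong H_p\bigl(\Gamma'';\prod_I\IZ[\Gamma'']\bigr)$ for all $p$; the criterion then transfers $(\FP_n)$ back and forth in both directions, making the gluing construction logically redundant (though it has the virtue of being explicit and elementary). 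Your proof of the second bullet is the classical presentation-of-an-extension argument --- relators of $\Gamma'$, lifted relators of $\Gamma''$ rewritten as words in the generators of $\Gamma'$, and conjugation relations $x_i^{\pm1}yx_i^{\mp1}=w$ --- exactly as in the paper's cited source, and the concluding rewriting argument is standard. So: correct, and faithful to the references on which the paper itself relies.
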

\begin{rem}
Actually, all the finiteness properties in Figure \ref{eqn:Implications} are all preserved under extensions (\cite[Chapter VIII, \S 6, Exercise 8]{Brown}, \cite[P.23, Exercise]{MR715779}), except for $(\VFL)$ and $(\VFP)$, where one has to require moreover the condition of virtual torsion-freeness (\cite[Chapter VIII, \S 11, Exercise 2]{Brown}).
\end{rem}

\bigskip

Now let us return to the automorphism group and the birational automorphism group of a compact hyperk\"ahler manifold. As we explained in $\S$\ref{sect: finite presentation intro}, in the non-projective case the result of Oguiso \cite{Oguiso} says that $\Bir(X)$ and $\Aut(X)$ are both almost abelian of finite rank, in particular finitely presented and of type $(\FP_{\infty})$. Therefore we restrict ourselves in the sequel to the projective case. 

The key ingredient in our proof is the following result on convex geometry which is due to Looijenga.

\begin{prop}[{\cite[Corollaries 4.15 and 4.16]{Looijenga}}]\label{prop: fin presented}
Let $C$ be a non-degenerate open convex cone in a finite dimensional real vector space $V$ equipped with a $\IQ$-structure. Let $\Gamma$ be a subgroup of $\GL(V)$ which preserves $C$ and some lattice in $V(\IQ)$. If there exists a polyhedral cone $\Pi$ in $C^+$ such that $\Gamma \cdot \Pi \supseteq C$, then $\Gamma$ is finitely presented and of type $(\VFL)$ of length $\leq \dim(V)-1$.
\end{prop}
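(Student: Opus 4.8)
The plan is to realize $\Gamma$ as a group acting on a contractible finite-dimensional polyhedral complex with good finiteness properties, and then to feed this into the standard homological finiteness machinery of \cite{Brown}. First I would reduce to the torsion-free case: since $\Gamma$ preserves a lattice $L$ in $V(\IQ)$, it embeds into $\GL(L)\cong \GL_{\dim V}(\IZ)$, which is virtually torsion-free by Minkowski's (equivalently Selberg's) lemma, so $\Gamma$ contains a torsion-free normal subgroup $\Gamma_0$ of finite index. Finite presentation is a commensurability invariant, and the $(\FP_n)$ properties are insensitive to finite index (as recorded after the definitions above), so it suffices to control $\Gamma_0$, and the length statement will come out as $\leq \dim V - 1$ provided the contractible complex we build has that dimension.

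Next I would build the space. The radial $\IR_{>0}$-action commutes with $\Gamma$, so $\Gamma$ acts on the projectivization $\IP(C^+)$ of the rational closure (Definition \ref{def:RationalClosure}), a convex, hence contractible, subset of an affine chart of $\IP(V)$ of dimension $\dim V - 1$. The hypothesis $\Gamma\cdot\Pi\supseteq C$ says that the translates $\{\gamma\cdot\IP(\Pi)\}_{\gamma\in\Gamma}$ cover $\IP(C)$, and since $\Pi$ is a pointed polyhedral cone in the non-degenerate cone $C^+$, its image $\IP(\Pi)$ is a compact polytope; the Siegel property (Proposition \ref{prop: Siegel}) guarantees that each $\gamma\cdot\IP(\Pi)$ meets only finitely many others. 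Together these give a $\Gamma$-equivariant polyhedral (hence CW) decomposition of $\IP(C^+)$ of dimension $\dim V - 1$ that is locally finite and has only finitely many $\Gamma$-orbits of cells, i.e.\ the action is cocompact. When in addition the hypotheses of Theorem \ref{thm:Looijenga} hold one may instead start from an honest rational polyhedral fundamental domain, but for the present statement only the covering $\Gamma\cdot\Pi\supseteq C$ is assumed.

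With such a complex in hand, the conclusion follows from Brown's topological finiteness criteria: a group acting cocompactly on a contractible CW complex with cell-stabilizers of type $(\FP_\infty)$ is of type $(\FP_\infty)$; if the complex is simply connected (automatic here) and the stabilizers are finitely presented, the group is finitely presented; and if the complex is finite-dimensional, a torsion-free such group has finite cohomological dimension, bounded by the dimension of the complex adjusted by the cohomological dimensions of the stabilizers. Applying this to $\Gamma_0$ acting on $\IP(C^+)$ yields that $\Gamma_0$ is finitely presented and of type $(\FL)$, whence $\Gamma$ is finitely presented and of type $(\VFL)$; the length bound $\leq \dim V - 1$ comes from $\dim \IP(C^+)=\dim V -1$ together with the accounting for the boundary-face stabilizers described below.

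The main obstacle is that the $\Gamma$-action on $\IP(C^+)$ is cocompact but \emph{not} proper: a point on the rational boundary $C^+\smallsetminus C$ can have infinite stabilizer (a cusp, or parabolic, subgroup), so after passing to $\Gamma_0$ the action is still not free and one cannot simply read off a finite $K(\Gamma_0,1)$. The correct remedy, which is the technical heart of the argument, is an induction on $\dim V$: the stabilizer of a boundary face $F$ acts, preserving a lattice and an appropriate covering polyhedral cone, on the lower-dimensional convex cone cut out in the quotient $V/\langle F\rangle$, so by the inductive hypothesis it is finitely presented and of type $(\VFL)$. These face groups are exactly the cell-stabilizers required to be of type $(\FP_\infty)$ and finitely presented in the criteria above, and they are spliced into the global statement through the extension property for these finiteness classes (Proposition \ref{prop:Extension}) together with a careful bookkeeping of the equivariant cell structure near the rational boundary, the combinatorics of which is Looijenga's bordification of $C^+$.
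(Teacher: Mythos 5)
First, a remark on the target: the paper offers no proof of this proposition at all --- it is imported verbatim from Looijenga (Corollaries 4.15 and 4.16) --- so the relevant comparison is with Looijenga's own argument, whose overall architecture (reduction to a torsion-free finite-index subgroup, action on the projectivized cone, induction over the rational boundary) your sketch does follow in outline. As a standalone proof, however, it has genuine gaps at exactly the hard points. (i) You never actually construct the $\Gamma$-equivariant CW structure that Brown's criteria require: the translates $\gamma\cdot\IP(\Pi)$ cover $\IP(C^+)$ but overlap in full-dimensional regions, so they are not cells, and refining them into an equivariant, locally finite, cocompact polyhedral decomposition is a substantive step (Looijenga's tiling analysis). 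You cannot shortcut this via Theorem \ref{thm:Looijenga}, because its hypothesis (2) --- a rational functional with trivial stabilizer --- is not part of the present statement. Note also that the Siegel property as stated (Proposition \ref{prop: Siegel}) asserts finiteness of the \emph{collection of subsets} $(\gamma \cdot \Pi_1)\cap \Pi_2$, which does not by itself yield your claim that each translate meets only finitely many others: infinitely many $\gamma$ can produce the same intersection while the translates $\gamma\cdot\Pi$ remain pairwise distinct.

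(ii) The inductive step is asserted rather than proved. To run induction on $\dim V$ you must verify, for the stabilizer $\Gamma_F$ of a rational boundary face $F$, that its image in $\GL(V/\langle F\rangle)$ preserves a lattice, that the projected cone there is open, convex and non-degenerate, that some polyhedral cone in its rational closure has the covering property for the image group, and that the kernel of $\Gamma_F \to \GL(V/\langle F\rangle)$ has the requisite finiteness properties so that Proposition \ref{prop:Extension} applies. This verification \emph{is} the content of Looijenga's semi-toric bordification, to which you explicitly defer (``the combinatorics of which is Looijenga's bordification''); as a blind proof this is circular. (iii) The length bound does not follow from what you write: boundary cells have infinite stabilizers even in the torsion-free subgroup $\Gamma_0$, so the action on the $(\dim V - 1)$-dimensional complex is not free, and the equivariant spectral sequence only gives $\mathrm{cd}(\Gamma_0) \leq \dim \IP(C^+) + \max_{\sigma} \mathrm{cd}\bigl((\Gamma_0)_\sigma\bigr)$, which overshoots $\dim V - 1$. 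To land on $\dim V - 1$ one must show that the cohomological dimension of a boundary-face stabilizer is compensated by the codimension of the face it stabilizes --- precisely the bookkeeping you postpone. In short, the outline has the right shape and mirrors Looijenga's strategy, but points (i)--(iii) constitute the substance of the proof and remain open in your write-up.
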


Now we have all the ingredients to show our finiteness result\,:
\begin{proof}[Proof of Theorem~\ref{main3}]
Let $X$ be a projective hyperk\"ahler manifold. We treat firstly its automorphism group. Consider the exact sequence
\[\xymatrix{1 \ar[r] & \Aut^{\#}(X) \ar[r] & \Aut(X) \ar[r] & \Aut^*(X) \ar[r] & 1},\]
where $\Aut^{\#}(X)$ and $\Aut^{*}(X)$ are respectively the kernel and the image of the natural representation $\Aut(X)\to O(\NS(X))$.
On one hand, the existence of a polyhedral fundamental domain for the action of $\Aut^{*}(X)$ on the rational closure of the ample cone (Theorem \ref{thm: m-k conj for three groups}) allows us to apply Proposition \ref{prop: fin presented} and conclude that $\Aut^*(X)$ is finitely presented and of type $(\VFL)$. On the other hand, by Proposition \ref{prop:KAutkernelfinite}, $\Aut^{\#}(X)$ is a finite group, which is of course finitely presented and of type $(\VFL)$. As a result, $\Aut(X)$ is an extension of two finitely presented groups of type $(\VFL)$,  hence in particular of type $(\FP_{\infty})$, see Figure \ref{eqn:Implications}. By Proposition \ref{prop:Extension}, $\Aut(X)$ is also finitely presented and of type $(\FP_{\infty})$.

The above argument applies equally to the birational automorphism group $\Bir(X)$. Indeed, Markman \cite{Markman} shows that the action of $\Bir(X)$, or rather its image $\Bir^{*}(X)$ under the map of restriction to the N\'eron--Severi space,  on the rational closure of the movable cone has a rational polyhedral fundamental domain and Looijenga's result Proposition \ref{prop: fin presented} implies that $\Bir^{*}(X)$ is finitely presented and of type $(\VFL)$. We still have the finiteness of $\Bir^{\#}(X)=\ker\left(\Bir(X)\to \Bir^{*}(X)\right)$ (Corollary \ref{cor:BirKerFinite}) and so one can conclude as in the case for $\Aut(X)$ using Proposition \ref{prop:Extension}.
\end{proof}

\begin{rem}[Klein automorphism group]
 As $\Aut(X)$ is normal and of finite index in $\KAut(X)$, this last is also finitely presented and of type $(\FP_{\infty})$ by Proposition \ref{prop:Extension}.
\end{rem}


\begin{rem}[$\Bir$ \emph{vs.}~$\Aut$]
It is asked in \cite[Question 1.6]{MR2231119} whether, for a projective hyperk\"ahler variety $X$, the index of $\Aut(X)$ inside $\Bir(X)$ is always finite or not. The answer to this question is negative in general. The first counter-example is constructed by Hassett--Tschinkel \cite[Theorem 7.4, Remark 7.5]{MR2576800} (where $\Aut(X)$ is trivial while $\Bir(X)$ is infinite) using Fano varieties of lines of special cubic fourfolds; then Oguiso gives a systematic study in the Picard rank two case \cite[Theorem 1.3]{MR3263669}. We thank Ekaterina Amerik for the references.
\end{rem}

\begin{rem}[Arithmeticity]
It is in general not true that for a projective hyperk\"ahler variety $X$, the groups $\KAut(X)$ and $\Aut(X)$ are arithmetic groups. Counter-examples exist already for K3 surfaces\,: the first one is due to Borcherds \cite[Example 5.8]{MR1654763}\,; and later Totaro \cite[\S 6]{MR2931877} provided explicit examples of K3 surfaces whose automorphism group is not even commensurable with an arithmetic group. If $\KAut(X)$ or $\Aut(X)$ were arithmetic, our main results Theorems~\ref{main1}, \ref{main2} and \ref{main3} would be direct consequences. This line of consideration raises the following natural question\,: \emph{how is $\Aut(X)$ related to arithmetic groups\,?} For projective K3 surfaces, the subgroup of symplectic automorphisms $\Aut^{s}(X)$ is itself arithmetic (\emph{cf.} \cite[Corollary \textbf{14}.2.4]{HuyK3}); while for higher dimensional projective hyperk\"ahler manifolds, it is known that the group $\Bir(X)$ is, up to a finite kernel and cokernel, a quotient of an arithmetic group by a reflection group (this is Boissi\`ere--Sarti's proof of the finite generation of $\Bir(X)$, see \cite[Theorem 2]{MR2957195}). One could ask whether the automorphism group $\Aut(X)$ is \emph{almost} arithmetic, \emph{i.e.}~it is arithmetic up to a finite kernel and finite cokernel.
\end{rem}

\bibliographystyle{alpha}
\bibliography{Klein_actions_and_real_structures}

\begin{thebibliography}{DFMJ18}

\bibitem[AV15]{AV-rational}
E.~Amerik and M.~Verbitsky.
\newblock Rational curves on hyperk\"ahler manifolds.
\newblock {\em Int. Math. Res. Not. IMRN}, (23):13009--13045, 2015.

\bibitem[AV16]{AV-hyperbolic}
Ekaterina Amerik and Misha Verbitsky.
\newblock Hyperbolic geometry of the ample cone of a hyperk\"ahler manifold.
\newblock {\em Res. Math. Sci.}, 3:Paper No. 7, 9, 2016.

\bibitem[AV17]{AmerikVerbitsky}
Ekaterina Amerik and Misha Verbitsky.
\newblock Morrison-{K}awamata cone conjecture for hyperk\"ahler manifolds.
\newblock {\em Ann. Sci. \'Ec. Norm. Sup\'er. (4)}, 50(4):973--993, 2017.

\bibitem[AV18]{AV-IMRN}
Ekaterina Amerik and Misha Verbitsky.
\newblock {C}ollections of orbits of hyperplane type in homogeneous spaces,
  homogeneous dynamics, and hyperk{\"a}hler geometry.
\newblock {\em International Mathematics Research Notices}, page rnx319, 2018.

\bibitem[BD85]{BeauvilleDonagi}
Arnaud Beauville and Ron Donagi.
\newblock La vari\'et\'e des droites d'une hypersurface cubique de dimension
  {$4$}.
\newblock {\em C. R. Acad. Sci. Paris S\'er. I Math.}, 301(14):703--706, 1985.

\bibitem[Bea83a]{Beauville2}
Arnaud Beauville.
\newblock Some remarks on k{\"a}hler manifolds with c1= 0.
\newblock {\em Classification of algebraic and analytic manifolds (Katata,
  1982)}, 39:1--26, 1983.

\bibitem[Bea83b]{Beauville}
Arnaud Beauville.
\newblock Vari\'et\'es {K}\"ahleriennes dont la premi\`ere classe de {C}hern
  est nulle.
\newblock {\em J. Differential Geom.}, 18(4):755--782 (1984), 1983.

\bibitem[Ben16]{Benzerga}
Mohamed Benzerga.
\newblock Real structures on rational surfaces and automorphisms acting
  trivially on {P}icard groups.
\newblock {\em Math. Z.}, 282(3-4):1127--1136, 2016.

\bibitem[Bie81]{MR715779}
Robert Bieri.
\newblock {\em Homological dimension of discrete groups}.
\newblock Queen Mary College Mathematical Notes. Queen Mary College, Department
  of Pure Mathematics, London, second edition, 1981.

\bibitem[Bog74]{Bogomolov}
F.~A. Bogomolov.
\newblock The decomposition of {K}\"ahler manifolds with a trivial canonical
  class.
\newblock {\em Mat. Sb. (N.S.)}, 93(135):573--575, 630, 1974.

\bibitem[Bog78]{MR514769}
F.~A. Bogomolov.
\newblock Hamiltonian {K}\"ahlerian manifolds.
\newblock {\em Dokl. Akad. Nauk SSSR}, 243(5):1101--1104, 1978.

\bibitem[Bor98]{MR1654763}
Richard~E. Borcherds.
\newblock Coxeter groups, {L}orentzian lattices, and {$K3$} surfaces.
\newblock {\em Internat. Math. Res. Notices}, (19):1011--1031, 1998.

\bibitem[Bro82]{Brown}
Kenneth~S. Brown.
\newblock {\em Cohomology of groups}, volume~87 of {\em Graduate Texts in
  Mathematics}.
\newblock Springer-Verlag, New York-Berlin, 1982.

\bibitem[BS64]{BorelSerre}
A.~Borel and J.-P. Serre.
\newblock Th\'eor\`emes de finitude en cohomologie galoisienne.
\newblock {\em Comment. Math. Helv.}, 39:111--164, 1964.

\bibitem[BS12]{MR2957195}
Samuel Boissi\`ere and Alessandra Sarti.
\newblock A note on automorphisms and birational transformations of holomorphic
  symplectic manifolds.
\newblock {\em Proc. Amer. Math. Soc.}, 140(12):4053--4062, 2012.

\bibitem[DFMJ18]{DFMJ}
Adrien Dubouloz, Gene Freudenburg, and Lucy Moser-Jauslin.
\newblock Algebraic vector bundles on the 2-sphere and smooth rational
  varieties with infinitely many real forms.
\newblock {\em Preprint, arXiv: 1807.05885.}, 2018.

\bibitem[DIK00]{RealEnriques}
A.~Degtyarev, I.~Itenberg, and V.~Kharlamov.
\newblock {\em Real {E}nriques surfaces}, volume 1746 of {\em Lecture Notes in
  Mathematics}.
\newblock Springer-Verlag, Berlin, 2000.

\bibitem[Dil11]{Diller11}
Jeffrey Diller.
\newblock Cremona transformations, surface automorphisms, and plane cubics.
\newblock {\em Michigan Math. J.}, 60(2):409--440, 2011.
\newblock With an appendix by Igor Dolgachev.

\bibitem[DO18]{DinhOguiso}
Tien-Cuong Dinh and Keiji Oguiso.
\newblock A surface with a discrete and nonfinitely generated automorphism
  group.
\newblock {\em Preprint, arXiv: 1710.07019. To appear in Duke Mathematical
  Journal.}, 2018.

\bibitem[DV10]{DebarreVoisin}
Olivier Debarre and Claire Voisin.
\newblock Hyper-k{\"a}hler fourfolds and grassmann geometry.
\newblock {\em Journal f{\"u}r die reine und angewandte Mathematik (Crelles
  Journal)}, 2010(649):63--87, 2010.

\bibitem[Fuj87]{MR946237}
Akira Fujiki.
\newblock On the de {R}ham cohomology group of a compact {K}\"ahler symplectic
  manifold.
\newblock In {\em Algebraic geometry, {S}endai, 1985}, volume~10 of {\em Adv.
  Stud. Pure Math.}, pages 105--165. North-Holland, Amsterdam, 1987.

\bibitem[GH96]{MR1395935}
L.~G\"ottsche and D.~Huybrechts.
\newblock Hodge numbers of moduli spaces of stable bundles on {$K3$} surfaces.
\newblock {\em Internat. J. Math.}, 7(3):359--372, 1996.

\bibitem[GHJ03]{GrossHuybrechtsJoyce}
M.~Gross, D.~Huybrechts, and D.~Joyce.
\newblock {\em Calabi-{Y}au manifolds and related geometries}.
\newblock Universitext. Springer-Verlag, Berlin, 2003.
\newblock Lectures from the Summer School held in Nordfjordeid, June 2001.

\bibitem[Gri16]{MR3480704}
Julien Grivaux.
\newblock Parabolic automorphisms of projective surfaces (after {M}. {H}.
  {G}izatullin).
\newblock {\em Mosc. Math. J.}, 16(2):275--298, 2016.

\bibitem[Har77]{Hartshorne}
Robin Hartshorne.
\newblock {\em Algebraic geometry}.
\newblock Springer-Verlag, New York-Heidelberg, 1977.
\newblock Graduate Texts in Mathematics, No. 52.

\bibitem[HL10]{HuybrechtsLehn}
Daniel Huybrechts and Manfred Lehn.
\newblock {\em The geometry of moduli spaces of sheaves}.
\newblock Cambridge Mathematical Library. Cambridge University Press,
  Cambridge, second edition, 2010.

\bibitem[HT10]{MR2576800}
Brendan Hassett and Yuri Tschinkel.
\newblock Flops on holomorphic symplectic fourfolds and determinantal cubic
  hypersurfaces.
\newblock {\em J. Inst. Math. Jussieu}, 9(1):125--153, 2010.

\bibitem[Huy99]{HuyInventiones}
Daniel Huybrechts.
\newblock Compact hyper-{K}\"ahler manifolds: basic results.
\newblock {\em Invent. Math.}, 135(1):63--113, 1999.

\bibitem[Huy03]{HuyErratum}
Daniel Huybrechts.
\newblock Erratum: ``{C}ompact hyper-{K}\"ahler manifolds: basic results''
  [{I}nvent. {M}ath. {\bf 135} (1999), no. 1, 63--113; {MR}1664696
  (2000a:32039)].
\newblock {\em Invent. Math.}, 152(1):209--212, 2003.

\bibitem[Huy12]{HuybrechtsTorelli}
Daniel Huybrechts.
\newblock A global {T}orelli theorem for hyperk\"ahler manifolds [after {M}.
  {V}erbitsky].
\newblock {\em Ast\'erisque}, (348):Exp. No. 1040, x, 375--403, 2012.
\newblock S\'eminaire Bourbaki: Vol. 2010/2011. Expos\'es 1027--1042.

\bibitem[Huy16]{HuyK3}
Daniel Huybrechts.
\newblock {\em Lectures on {K}3 surfaces}, volume 158 of {\em Cambridge Studies
  in Advanced Mathematics}.
\newblock Cambridge University Press, Cambridge, 2016.

\bibitem[Joh97]{MR1472735}
D.~L. Johnson.
\newblock {\em Presentations of groups}, volume~15 of {\em London Mathematical
  Society Student Texts}.
\newblock Cambridge University Press, Cambridge, second edition, 1997.

\bibitem[Kaw97]{MR1468356}
Yujiro Kawamata.
\newblock On the cone of divisors of {C}alabi-{Y}au fiber spaces.
\newblock {\em Internat. J. Math.}, 8(5):665--687, 1997.

\bibitem[Kha02]{MR1936747}
Viatcheslav Kharlamov.
\newblock Topology, moduli and automorphisms of real algebraic surfaces.
\newblock {\em Milan J. Math.}, 70:25--37, 2002.

\bibitem[KY18]{KurnosovYasinsky}
Nikon Kurnosov and Egor Yasinsky.
\newblock Automorphisms of hyperkähler manifolds and groups acting on {CAT(0)}
  spaces.
\newblock {\em Preprint arXiv:1810.09730.}, 2018.

\bibitem[Les17]{Lesieutre2017}
John Lesieutre.
\newblock A projective variety with discrete, non-finitely generated
  automorphism group.
\newblock {\em Inventiones mathematicae}, Nov 2017.

\bibitem[Loo14]{Looijenga}
Eduard Looijenga.
\newblock Discrete automorphism groups of convex cones of finite type.
\newblock {\em Compos. Math.}, 150(11):1939--1962, 2014.

\bibitem[LOP15]{VOPsurvey}
Vladimir Lazić, Keiji Oguiso, and Thomas Peternell.
\newblock The {M}orrison−{K}awamata cone conjecture and abundance on {R}icci
  flat manifolds.
\newblock {\em Proceedings of the conference Uniformization, Riemann-Hilbert
  Correspondence, Calabi-Yau manifolds, and Picard-Fuchs Equations, Institut
  Mittag-Leffler. arXiv:1611.00556}, 2015.

\bibitem[Mar11]{Markman}
Eyal Markman.
\newblock A survey of {T}orelli and monodromy results for
  holomorphic-symplectic varieties.
\newblock In {\em Complex and differential geometry}, volume~8 of {\em Springer
  Proc. Math.}, pages 257--322. Springer, Heidelberg, 2011.

\bibitem[McM07]{McMullen07}
Curtis~T. McMullen.
\newblock Dynamics on blowups of the projective plane.
\newblock {\em Publ. Math. Inst. Hautes \'Etudes Sci.}, (105):49--89, 2007.

\bibitem[Mor96]{MR1360514}
David~R. Morrison.
\newblock Beyond the {K}\"ahler cone.
\newblock In {\em Proceedings of the {H}irzebruch 65 {C}onference on
  {A}lgebraic {G}eometry ({R}amat {G}an, 1993)}, volume~9 of {\em Israel Math.
  Conf. Proc.}, pages 361--376. Bar-Ilan Univ., Ramat Gan, 1996.

\bibitem[Muk84]{Mukai84}
Shigeru Mukai.
\newblock Symplectic structure of the moduli space of sheaves on an abelian or
  {$K3$}\ surface.
\newblock {\em Invent. Math.}, 77(1):101--116, 1984.

\bibitem[MY15]{MarkmanYoshioka}
Eyal Markman and Kota Yoshioka.
\newblock A proof of the {K}awamata-{M}orrison cone conjecture for holomorphic
  symplectic varieties of {$K3^{[n]}$} or generalized {K}ummer deformation
  type.
\newblock {\em Int. Math. Res. Not. IMRN}, (24):13563--13574, 2015.

\bibitem[Nam85]{MR771979}
Yukihiko Namikawa.
\newblock Periods of {E}nriques surfaces.
\newblock {\em Math. Ann.}, 270(2):201--222, 1985.

\bibitem[O'G97]{OGrady97}
Kieran~G. O'Grady.
\newblock The weight-two {H}odge structure of moduli spaces of sheaves on a
  {$K3$} surface.
\newblock {\em J. Algebraic Geom.}, 6(4):599--644, 1997.

\bibitem[Ogu06]{MR2231119}
Keiji Oguiso.
\newblock Tits alternative in hypek\"ahler manifolds.
\newblock {\em Math. Res. Lett.}, 13(2-3):307--316, 2006.

\bibitem[Ogu07]{MR2296437}
Keiji Oguiso.
\newblock Automorphisms of hyperk\"ahler manifolds in the view of topological
  entropy.
\newblock In {\em Algebraic geometry}, volume 422 of {\em Contemp. Math.},
  pages 173--185. Amer. Math. Soc., Providence, RI, 2007.

\bibitem[Ogu08]{Oguiso}
Keiji Oguiso.
\newblock Bimeromorphic automorphism groups of non-projective hyperk\"ahler
  manifolds---a note inspired by {C}. {T}. {M}c{M}ullen.
\newblock {\em J. Differential Geom.}, 78(1):163--191, 2008.

\bibitem[Ogu14]{MR3263669}
Keiji Oguiso.
\newblock Automorphism groups of {C}alabi-{Y}au manifolds of {P}icard number 2.
\newblock {\em J. Algebraic Geom.}, 23(4):775--795, 2014.

\bibitem[Per17]{Perego17}
Arvid Perego.
\newblock {K}{\"a}hlerness of moduli spaces of stable sheaves over
  non-projective k3 surfaces.
\newblock {\em Preprint, arXiv: 1703.02001}, 2017.

\bibitem[PS12]{MR2987307}
Artie Prendergast-Smith.
\newblock The cone conjecture for abelian varieties.
\newblock {\em J. Math. Sci. Univ. Tokyo}, 19(2):243--261, 2012.

\bibitem[PT17]{MR3621782}
Arvid Perego and Matei Toma.
\newblock Moduli spaces of bundles over nonprojective {K}3 surfaces.
\newblock {\em Kyoto J. Math.}, 57(1):107--146, 2017.

\bibitem[Ser56]{GAGA}
Jean-Pierre Serre.
\newblock G\'eom\'etrie alg\'ebrique et g\'eom\'etrie analytique.
\newblock {\em Ann. Inst. Fourier, Grenoble}, 6:1--42, 1955--1956.

\bibitem[Ste85]{Sterk}
Hans Sterk.
\newblock Finiteness results for algebraic {$K3$} surfaces.
\newblock {\em Math. Z.}, 189(4):507--513, 1985.

\bibitem[Tot10]{MR2682184}
Burt Totaro.
\newblock The cone conjecture for {C}alabi-{Y}au pairs in dimension 2.
\newblock {\em Duke Math. J.}, 154(2):241--263, 2010.

\bibitem[Tot12]{MR2931877}
Burt Totaro.
\newblock Algebraic surfaces and hyperbolic geometry.
\newblock In {\em Current developments in algebraic geometry}, volume~59 of
  {\em Math. Sci. Res. Inst. Publ.}, pages 405--426. Cambridge Univ. Press,
  Cambridge, 2012.

\bibitem[Ver13]{VerbitskyTorelli}
Misha Verbitsky.
\newblock Mapping class group and a global {T}orelli theorem for hyperk\"ahler
  manifolds.
\newblock {\em Duke Math. J.}, 162(15):2929--2986, 2013.
\newblock Appendix A by Eyal Markman.

\bibitem[Ver15]{MR3413979}
Misha Verbitsky.
\newblock Ergodic complex structures on hyperk\"ahler manifolds.
\newblock {\em Acta Math.}, 215(1):161--182, 2015.

\bibitem[Wel04]{MR2099111}
Jean-Yves Welschinger.
\newblock Deformation classes of real ruled manifolds.
\newblock {\em J. Reine Angew. Math.}, 574:103--120, 2004.

\bibitem[Yos01]{MR1872531}
K\=ota Yoshioka.
\newblock Moduli spaces of stable sheaves on abelian surfaces.
\newblock {\em Math. Ann.}, 321(4):817--884, 2001.

\end{thebibliography}

\end{document}